\documentclass[12pt]{amsart}

\voffset=-1.4mm
\oddsidemargin=15pt \evensidemargin=15pt
\headheight=9pt     \topmargin=26pt
\textheight=576pt   \textwidth=440.8pt
\parskip=0pt plus 4pt

\usepackage[usenames,dvipsnames]{xcolor}

\usepackage{enumitem,kantlipsum}
\usepackage{hyperref}
\usepackage{mathtools}
\usepackage{xfrac}
\usepackage[utf8]{inputenc}
\usepackage{amssymb}
\usepackage{amsmath, amsthm}
\usepackage{amsfonts}
\usepackage{comment}
\usepackage{tikz}
\usepackage{algorithmic}
\usepackage{asymptote}
\usepackage{url}
\usepackage{mathrsfs}

\newtheorem{theorem}{Theorem}[section]
\newtheorem{corollary}[theorem]{Corollary}
\newtheorem{lemma}[theorem]{Lemma}
\newtheorem{proposition}[theorem]{Proposition}
\newtheorem{conjecture}[theorem]{Conjecture}
\newtheorem{definition}[theorem]{Definition}

\usepackage[OT2,T1]{fontenc}
\DeclareSymbolFont{cyrletters}{OT2}{wncyr}{m}{n}
\DeclareMathSymbol{\Sha}{\mathalpha}{cyrletters}{"58}

\theoremstyle{definition}

\newtheorem{example}[theorem]{Example}
\newtheorem{remark}[theorem]{Remark}

\newcommand{\Pt}{\mathbb{P}^2}
\newcommand{\Pm}{\mathbb{P}^m}

\newcommand{\Ann}{{\rm Ann}}

\newcommand{\pmfq}{\PPP^m(\fq)}

\newcommand{\N}{\mathbb{N}}

\newcommand{\PPP}{\mathbb{P}}
\newcommand{\F}{\mathbb{F}}
\newcommand{\fq}{\F_q}

\newcommand{\OO}{\mathcal{O}}

\newcommand{\Spec}{\text{Spec}}

\newcommand{\m}{\mathfrak{m}}

\newcommand{\mult}{\operatorname{mult}}
\newcommand{\Supp}{\operatorname{Supp}}

\newcommand{\mm}{\mathtt{m}}

\newcommand\restr[2]{{
  \left.\kern-\nulldelimiterspace 
  #1 
  \vphantom{\big|} 
  \right|_{#2} 
  }}

\newenvironment{customthm}[1]
  {\innercustomthm}
  {\endinnercustomthm}

\newenvironment{customprop}[1]
  {\innercustomprop}
  {\endinnercustomprop}

\begin{document}
\title{Largest zero-dimensional intersection of $r$ degree $d$ hypersurfaces}
\author{Yuxin Lin, Deepesh Singhal}

\begin{abstract}
Suppose we have $r$ hypersurfaces in $\mathbb{P}^m$ of degree $d$, whose defining polynomials are linearly independent and their intersection is zero-dimensional. Then what is the maximum number of points in the intersection of the $r$ hypersurfaces?
We conjecture an exact formula for this problem and prove it when $m=2$.

We show that this can be used to compute the generalized Hamming weights of the projective Reed-Muller code $\operatorname{PRM}_q(d,2)$ and hence settle a conjecture of Beelen, Datta, and Ghorpade for $m=2$.
\end{abstract}

\maketitle

\section{Introduction}

Let $\fq$ be the finite field of size $q$.
Let $S(m,\fq)=\fq[x_0,\dots,x_m]$ and $S_d(m,\fq)$ be its $d^{th}$ graded component. Given a positive integer $r \leq \binom{m+d}{d}=\dim_{\fq}(S_d(m,\fq))$, Beelen, Datta and Ghorpade in \cite{beelen2018maximum} define
\[
e_r(d,m;q)
:=\max\{|V(W)(\fq)| : W\subseteq S_d(m,\fq), \dim(W)=r\}.
\]
We would like to note that the problem of computing $e_r(d,m;q)$ is equivalent to computing the $r^{th}$ generalized Hamming weight of the projective Reed-Muller code $PRM_q(d,m)$.
Beelen, Datta and Ghorpade have conjectured an exact formula for $e_r(d,m;q)$. We will introduce some notation to state their conjecture.

Let $\N$ be the set of non-negative integers. Beelen, Datta and Ghorpade in \cite{beelen2018maximum} define
\[
\Omega(d,m)
:=\Big\{
(\gamma_1,\dots,\gamma_{m+1})\in \N^{m+1}
: \sum_{i=1}^{m+1} \gamma_i=d
\Big\}.
\]
For $1\leq r\leq |\Omega(d,m)|=\binom{m+d}{d}$, let $\omega_r(d,m)=(\beta_1,\dots,\beta_{m+1})$ be its $r^{th}$ largest element under lexicographical ordering. Then, they define
\[
H_r(d,m;q)
:=\sum_{i=1}^m \beta_iq^{m-i}.
\]
Let $\pi_m(q):=|\pmfq|=\frac{q^{m+1}-1}{q-1}$. If $m<0$, we set $\pi_m(q)=0$.

\begin{conjecture}[Beelen-Datta-Ghorpade Conjecture]\label{Conj: complete GDC}\cite{beelen2022combinatorial}
Suppose $m,d,r$ are positive integers that satisfy $1 \leq r \leq \binom{m+d}{d}$. Pick the unique $1\leq l\leq m+1$, for which
$$\tbinom{m+d}{d}-\tbinom{m+d+1-l}{d} <r \leq \tbinom{m+d}{d}-\tbinom{m+d-l}{d}.$$
Let $j=r-\binom{m+d}{d}+\binom{m+d+1-l}{d}$, so $0< j\leq \binom{m+d-l}{d-1}$.
Then for $q\geq d+1$ we have
\[
e_r(d,m;q)
=H_j(d-1,m-l+1;q) +\pi_{m-l}(q).
\]
\end{conjecture}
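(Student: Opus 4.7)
The plan is to prove the full conjecture by induction on $m$, with the $m=2$ case established in this paper serving as the base of the induction. The proof splits into constructing an explicit configuration that achieves the lower bound and proving a matching upper bound by a hyperplane restriction argument. A secondary induction on $d$ handles the recursive step, with $d=1$ being trivial (any $r$-dimensional space of linear forms has common zero set a projective subspace of dimension $m-r$).

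For the lower bound, fix a linear subspace $L \subseteq \PPP^m$ of dimension $m-l$ and an $(m-l+1)$-dimensional subspace $M \supseteq L$. A direct computation shows $\dim I(L)_d - \dim I(M)_d = \binom{m-l+d}{d-1}$, and the quotient $I(L)_d / I(M)_d$ is naturally identified with $S_{d-1}(m-l+1, \fq)$ via the map $f \mapsto (f|_M)/\ell_L$, where $\ell_L$ is a linear form on $M$ defining $L$. By the outer induction hypothesis applied on $M \cong \PPP^{m-l+1}$, pick a $j$-dimensional subspace $W_0 \subseteq S_{d-1}(m-l+1, \fq)$ whose common zero set $Z$ on $M(\fq)$ has exactly $H_j(d-1, m-l+1; q)$ points, and, using the hypothesis $q \geq d+1$, arrange the coordinates so that $Z \cap L = \emptyset$. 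Lift $W_0$ to $\tilde W_0 \subseteq I(L)_d$ and set $W = I(M)_d \oplus \tilde W_0$; a dimension count gives $\dim W = r$. Every element of $W$ vanishes on $L$, and since $W \supseteq I(M)_d$ we have $V(W)(\fq) \subseteq M(\fq)$, so $V(W)(\fq) = L(\fq) \sqcup Z$, of size $\pi_{m-l}(q) + H_j(d-1, m-l+1; q)$.

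For the upper bound, given any $r$-dimensional $W \subseteq S_d(m, \fq)$ with $V(W)$ zero-dimensional, pick a hyperplane $H \subseteq \PPP^m$ with defining form $\ell_H$ and consider the short exact sequence $0 \to W_H \to W \to \overline W \to 0$, where $W_H = W \cap \ell_H S_{d-1}(m, \fq)$ and $\overline W$ is the image of restriction to $H$. Set $r' = \dim W_H$ and $r'' = r - r'$. The points of $V(W)(\fq) \cap H$ are bounded by $e_{r''}(d, m-1; q)$ by induction on $m$, and the points of $V(W)(\fq) \setminus H$ are contained in the common zero set of $W_H / \ell_H \subseteq S_{d-1}(m, \fq)$, bounded by $e_{r'}(d-1, m; q)$ by induction on $d$ (or on $r$, using the convention that points on $H$ are already counted). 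Varying the hyperplane $H$, one chooses the splitting that maximizes the sum, and one verifies that the maximum collapses to the claimed closed form.

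The main obstacle is the combinatorial matching: one must show the recursion yields exactly $H_j(d-1, m-l+1; q) + \pi_{m-l}(q)$ and not merely an upper bound that is weaker. This requires three ingredients: (a) a monotonicity/convexity property of the function $r \mapsto H_r(d,m;q)$ derivable from the lexicographic definition of $\omega_r(d,m)$; (b) a structural classification showing that an extremal $W$ must, up to coordinate change, arise from the lower-bound construction above, so that the recursion bottoms out at the known base case; and (c) sharp control over the choice of hyperplane $H$ in the upper-bound step, exploiting $q \geq d+1$ to guarantee that a hyperplane realizing the inductive optimum exists. Item (b) is the principal difficulty: even for fixed $l$, classifying the extremal ideals beyond the $m=2$ case requires new input (extending Sørensen-Boguslavsky style classifications to configurations containing positive-dimensional components), and failure of such classifications for small $q$ is precisely why the conjecture restricts to $q \geq d+1$.
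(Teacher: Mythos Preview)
The statement you are attempting to prove is Conjecture~\ref{Conj: complete GDC}, which the paper does \emph{not} prove in general; only the case $m=2$ is established (Theorem~\ref{GDC for m=2}). Your proposal is an attempt to go well beyond what the paper achieves, and by your own account it is incomplete: you write that ``Item (b) is the principal difficulty \dots\ requires new input,'' which is an admission that the argument does not close.

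Beyond that admission, the hyperplane-restriction step has a concrete gap. For a given $W$ and hyperplane $H$ the split $r=r'+r''$ is determined by $W$ and $H$, not chosen freely; to make the induction work you would need, for \emph{every} $W$, the existence of \emph{some} $H$ with $f_{r''}(d,m-1;q)+f_{r'}(d-1,m;q)\leq f_r(d,m;q)$. You supply neither the combinatorial verification that such an inequality holds for the splits that actually arise nor an argument that a suitable $H$ exists (and when $W_H=0$ your bound on $V(W)\setminus H$ is vacuous, so a generic $H$ is useless). The sentence ``one verifies that the maximum collapses to the claimed closed form'' hides precisely the step that makes the conjecture hard; if this recursion closed up as directly as you suggest, the problem would not be open for $m\geq 3$. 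Finally, the lower bound is already known unconditionally (Proposition~\ref{Lower bound for erdm}), so that half of your sketch adds nothing new.
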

We denote this conjectured formula as
\[
f_r(d,m;q)
:=H_j(d-1,m-l+1;q) +\pi_{m-l}(q).
\]
For fixed $m$, $d$ and $r$, it is a polynomial in $q$.
We show that there is a simpler way of describing this polynomial.
\begin{proposition}\label{Prop: simple expression frdm}
Given positive integers $m,d,r$ that satisfy $1\leq r\leq \binom{m+d}{d}$. Suppose $\omega_r(d,m)=(\beta_1,\dots,\beta_{m+1})$. Let $l$ be the smallest index for which $\beta_l\neq 0$. Then we have
$$\tbinom{m+d}{d}-\tbinom{m+d+1-l}{d} <r \leq \tbinom{m+d}{d}-\tbinom{m+d-l}{d},$$
and
\[f_r(d,m;q)=H_r(d,m;q)+\pi_{m-l-1}(q).\]
\end{proposition}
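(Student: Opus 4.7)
The plan is to verify both claims by a direct combinatorial analysis of the lexicographic ordering on $\Omega(d,m)$, followed by a routine algebraic simplification of the two polynomial expressions.

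\textbf{Step 1: Locating $r$ via the first nonzero coordinate.} First I would partition $\Omega(d,m)$ according to the smallest index $l$ with $\gamma_l\neq 0$. Tuples with $\gamma_1=\cdots=\gamma_{l-1}=0$ and $\gamma_l\geq 1$ are in bijection with $\Omega(d-1,m-l+1)$ via $(0,\dots,0,\gamma_l,\dots,\gamma_{m+1})\mapsto (\gamma_l-1,\gamma_{l+1},\dots,\gamma_{m+1})$, and this bijection preserves lexicographic order. Hence the number of such tuples is $\binom{m+d-l}{d-1}$, and they occupy a contiguous block in the lex ordering of $\Omega(d,m)$. Using the hockey-stick identity
\[\sum_{i=1}^{l}\binom{m+d-i}{d-1}=\binom{m+d}{d}-\binom{m+d-l}{d},\]
the block corresponding to "smallest nonzero index equals $l$" occupies lex-ranks
\[\binom{m+d}{d}-\binom{m+d+1-l}{d}+1,\ \dots,\ \binom{m+d}{d}-\binom{m+d-l}{d}.\]
This gives exactly the inequality in the proposition and identifies the index $l$ from the conjecture with the smallest nonzero coordinate of $\omega_r(d,m)$.

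\textbf{Step 2: Identifying $\omega_j(d-1,m-l+1)$.} Since the bijection in Step 1 is order-preserving and $j=r-\binom{m+d}{d}+\binom{m+d+1-l}{d}$ is the position of $r$ within the block, the image of $\omega_r(d,m)=(0,\dots,0,\beta_l,\dots,\beta_{m+1})$ under the bijection is $\omega_j(d-1,m-l+1)=(\beta_l-1,\beta_{l+1},\dots,\beta_{m+1})$.

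\textbf{Step 3: Algebraic identity.} Expanding the definition of $H_j$ with the tuple identified in Step 2 gives
\[H_j(d-1,m-l+1;q)=(\beta_l-1)q^{m-l}+\beta_{l+1}q^{m-l-1}+\cdots+\beta_m q^0.\]
Adding $\pi_{m-l}(q)=q^{m-l}+q^{m-l-1}+\cdots+1$ absorbs the $-q^{m-l}$ and leaves $\pi_{m-l-1}(q)$ as the tail:
\[f_r(d,m;q)=\beta_l q^{m-l}+\beta_{l+1}q^{m-l-1}+\cdots+\beta_m q^0+\pi_{m-l-1}(q).\]
Finally, since $\beta_1=\cdots=\beta_{l-1}=0$, the sum defining $H_r(d,m;q)=\sum_{i=1}^m\beta_i q^{m-i}$ collapses to exactly $\beta_l q^{m-l}+\cdots+\beta_m q^0$, yielding $f_r(d,m;q)=H_r(d,m;q)+\pi_{m-l-1}(q)$.

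There is no real obstacle here; the statement is essentially a bookkeeping identity once one recognizes the order-preserving bijection between the block of $\Omega(d,m)$ indexed by $l$ and the whole of $\Omega(d-1,m-l+1)$. The only mildly delicate point is getting the index shift in Step 1 correct and confirming the hockey-stick identity so that the stated range of $r$ matches the conjecture's cutoffs exactly; once that is in place, Step 3 is a single telescoping of geometric-series terms.
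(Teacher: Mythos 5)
Your proof is correct and follows essentially the same route as the paper: both arguments hinge on the order-preserving bijection between the block of tuples with first nonzero coordinate at index $l$ and $\Omega(d-1,m-l+1)$, followed by the same telescoping $H_j(d-1,m-l+1;q)+\pi_{m-l}(q)=H_r(d,m;q)-q^{m-l}+\pi_{m-l}(q)=H_r(d,m;q)+\pi_{m-l-1}(q)$. The only cosmetic difference is that you re-derive the block boundaries via the hockey-stick identity, whereas the paper reads them off from its explicit rank formula (Lemma~\ref{r as function of alpha} via Corollary~\ref{Cor: index of d,000}).
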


We would like to note that certain special cases of Conjecture~\ref{Conj: complete GDC} have been proven.
\begin{enumerate}
    \item The case $r=1$ was shown by Serre in \cite{serre1991lettre} and S{\o}rensen in \cite{sorensen1991projective}.
    \item The case $r=2$ was proven by Boguslavsky in \cite{boguslavsky1997number}.
    \item The case $d=2$ was proven by Zanella in \cite{zanella1998linear}.
    \item The case $r\leq m+1$, the case $d=1$ and the case $m=1$ were proven by Datta and Ghorpade in \cite{datta2017number}.
    \item The case $r\leq \binom{m+2}{2}$ was proven by Beelen, Datta and Ghorpade in \cite{beelen2018maximum}.
    \item The case $\binom{m+d}{d}-d\leq r\leq \binom{m+d}{d}$ was proven by Datta and Ghorpade in \cite{datta2017remarks}. This range of $r$ corresponds to $l\in \{m,m+1\}$.
\end{enumerate}
All of these cases involved specific ranges of $r$ or special values of $m,d$, but allowed for any $q\geq d+1$.
In \cite{singhal2025conjecturebeelendattaghorpade}, the authors showed that Conjecture~\ref{Conj: complete GDC} is true for any $m$, $d$ and $r$ when $q$ is sufficiently large.

\begin{theorem}\cite[Theorem 1.11]{singhal2025conjecturebeelendattaghorpade}
Suppose we are given positive integers $m,d,r$ such that $1 \leq r \leq \binom{m+d}{d}$. Pick the unique $1\leq l\leq m+1$ and $1\leq c\leq d$ for which
\[\tbinom{m+d}{d}-\tbinom{m+d+1-l}{d}+\tbinom{m+d-l-c}{d-c-1} <r 
\leq \tbinom{m+d}{d}-\tbinom{m+d+1-l}{d}+\tbinom{m+d+1-l-c}{d-c}.\]
If
$$q\geq \max\Big\{2(m-l+1)c^2+1,\;8\frac{d^{l+1}}{c},\; 164 c^{14/3}\Big\},$$
then we have
$$e_r(d,m;q)= f_r(d,m;q).$$
\end{theorem}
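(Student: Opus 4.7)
My plan is the standard two-sided attack: construct an $r$-dimensional subspace $W \subseteq S_d(m,\fq)$ realizing $f_r(d,m;q)$ zeros (lower bound), and show by induction on $(m,d)$ that no such $W$ can do better once $q$ is large enough (upper bound).

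For the lower bound, Proposition~\ref{Prop: simple expression frdm} rewrites $f_r(d,m;q)$ as $H_r(d,m;q) + \pi_{m-l-1}(q)$, so the target decomposes as a ``stratified'' cluster governed by $\omega_r(d,m) = (\beta_1,\dots,\beta_{m+1})$ together with one full $(m-l-1)$-dimensional linear piece. I would imitate the classical hyperplane-stacking construction from Beelen--Datta--Ghorpade: inside a $\PPP^{m-l+1}$ cut out by $l-1$ coordinate hyperplanes, take $\beta_l$ copies of one coordinate form in the leading direction, $\beta_{l+1}$ in the next, and so on. The parameter $c$ in the hypothesis governs how many leading blocks agree with the extremal pattern, and a final degree-$c$ adjustment should produce the advertised count; verifying $\dim W = r$ and computing $|V(W)(\fq)|$ reduces to a routine inclusion--exclusion on the linear pieces.

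For the upper bound, I would induct on $m$ (and on $d$ within each $m$), using the Beelen--Datta--Ghorpade result for $r \leq \binom{m+2}{2}$ and the Datta--Ghorpade result for $l \in \{m,m+1\}$ as base cases. Given $W$ of dimension $r$, pick a linear form $\ell$ that does not divide any nonzero element of $W$ and whose vanishing hyperplane $H = V(\ell)$ captures a near-maximum number of $\fq$-points of $V(W)$. Split $|V(W)(\fq)| = |V(W) \cap H(\fq)| + |V(W) \setminus H(\fq)|$: the first term is handled by applying induction to the restricted system on $H \cong \PPP^{m-1}$, and the second by writing points off $H$ as zeros of a system of degree $d-1$ (after dividing through by $\ell$), which falls to the $d-1$ case. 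Lining up the arithmetic relies on the bijection $\omega_r(d,m) \leftrightarrow \omega_j(d-1, m-l+1)$ that is built into Conjecture~\ref{Conj: complete GDC}.

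The three lower bounds on $q$ play distinct roles. One (Lang--Weil or Schwartz--Zippel type) ensures a sufficiently generic hyperplane $H$ exists; another prevents degenerate incidences between $V(W)$ and low-degree auxiliary varieties; the third controls the ``defect'' when the restriction map $W \to S_d(H,\fq)$ drops rank below $r$. That defect analysis is the main obstacle: when fewer than $r$ independent degree-$d$ forms survive restriction, the induction hypothesis applies to a smaller $r'$, and one must show the concomitant loss equals exactly what the $f_r - f_{r'}$ formula predicts. Handling this uniformly in $(l,c)$ is what forces the specific thresholds $8\,d^{l+1}/c$ and $164\,c^{14/3}$, and I expect the bulk of the technical work to concentrate there.
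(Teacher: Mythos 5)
First, note that this paper does not prove the statement at all: it is imported verbatim from \cite[Theorem 1.11]{singhal2025conjecturebeelendattaghorpade}, so there is no internal proof to compare your attempt against. Judged on its own terms, your proposal is an outline rather than a proof, and its central step is the one you yourself flag as unresolved. The lower bound is not an issue (it is exactly Proposition~\ref{Lower bound for erdm}, i.e.\ Theorem 2.3 of Beelen--Datta--Ghorpade, which this paper already quotes), but your upper bound rests on a hyperplane-section induction whose ``defect analysis'' --- what happens when the restriction map $W\to S_d(H)$ drops rank, and why the resulting loss matches $f_r - f_{r'}$ exactly --- is precisely the content of the theorem, and you give no argument for it. An averaging choice of $H$ generically loses additive constants, which is fatal when the goal is an exact formula rather than an asymptotic bound; none of the earlier exact results in this line (Serre, S{\o}rensen, Zanella, Datta--Ghorpade) were obtained by such an induction without substantial additional structure, and you supply none.

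Second, the shape of the hypotheses is strong evidence that the actual argument is different in kind. Thresholds of the form $q\geq 2(m-l+1)c^2+1$ and $q\geq 164\,c^{14/3}$ are characteristic of explicit Lang--Weil/Cafure--Matera point-count estimates for absolutely irreducible varieties of degree controlled by $c$, and the companion results that this paper does import from the same source (Propositions~\ref{dimension}, \ref{degree} and \ref{Prop: bound on the weighted sum}) bound $\dim V(W)$ by $m-l$, bound $\deg_{m-l}V(W)$ by $c$, and bound the weighted sum $\sum_j d^{-j}\deg_{m-j}(V(W))$. The natural strategy consistent with these ingredients is to decompose $V(W)$ into irreducible components, apply the degree bounds, and count $\fq$-points component by component with explicit Lang--Weil, the three lower bounds on $q$ ensuring that the top-dimensional components dominate. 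Your sketch assigns these thresholds to roles (genericity of a hyperplane, rank of a restriction map) that do not explain the exponents $14/3$ and $l+1$. As written, the proposal cannot be accepted as a proof of the theorem.
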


\begin{theorem}\cite[Theorem 1.12]{singhal2025conjecturebeelendattaghorpade}\label{Thm l=1 with e}
Suppose $m\geq 2$, $d\geq 2$, $0\leq e\leq d-2$ and $\binom{m+e}{e}< r \leq \binom{m+e+1}{e+1}$.
If $q\geq \max\{d+e+\frac{e^2-1}{d-(e+1)}, d-1+e^2-e\}$, then we have
$$e_r(d,m;q)= f_r(d,m;q).$$
\end{theorem}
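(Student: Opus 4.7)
The hypothesis $\binom{m+e}{e}<r\le\binom{m+e+1}{e+1}$ places $r$ in the range where $\omega_r(d,m)=(\beta_1,\ldots,\beta_{m+1})$ has first coordinate $\beta_1=t:=d-e-1\in\{1,\ldots,d-1\}$; equivalently $l=1$ in the BDG notation, and Proposition~\ref{Prop: simple expression frdm} then gives $f_r(d,m;q)=H_r(d,m;q)+\pi_{m-2}(q)$. The plan is to prove the two inequalities separately: the lower bound $e_r\ge f_r$ by an explicit construction, and the upper bound $e_r\le f_r$ by induction on $d$ (with $t$ held fixed, the base case $e=0$, equivalently $r\le m+1$, falling under item~(4) of the introduction).

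For the lower bound I would iterate a common linear factor construction. Fix an $(m-2)$-dimensional linear subspace $\Pi\subset\mathbb{P}^m$ together with $t$ distinct hyperplanes through $\Pi$ with defining forms $L_1,\ldots,L_t$; their union has $tq^{m-1}+\pi_{m-2}(q)$ points by inclusion--exclusion. Inductively on $e$, I would choose an $r$-dimensional subspace $V\subseteq S_{e+1}(m,\fq)$ realizing the optimal configuration for the degree-$(e+1)$ problem, arranged so that $V(V)$ meets the hyperplane union in minimal overlap. Then $W=L_1\cdots L_t\cdot V$ has $\dim W=r$ and $V(W)=\bigcup_i V(L_i)\cup V(V)$, and the counts combine to give $f_r(d,m;q)$.

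For the upper bound, given $W\subseteq S_d(m,\fq)$ of dimension $r$ with $|V(W)(\fq)|\ge f_r(d,m;q)$, the inductive step reduces to producing a linear form $L$ dividing every $F\in W$. Once this is achieved, $W=L\cdot W'$ for $W'\subseteq S_{d-1}(m,\fq)$ of dimension $r$, which sits in the analogous range with parameter $e-1$; the inductive hypothesis applied to $W'$ together with the identity $f_r(d,m;q)=f_r(d-1,m;q)+q^{m-1}$, immediate from Proposition~\ref{Prop: simple expression frdm} since $t\ge 1$, closes the induction (one checks directly that the $q$-threshold for $(d,e)$ implies the one for $(d-1,e-1)$).

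The main obstacle is producing this common linear factor, and this is where the thresholds on $q$ enter. I would argue by contradiction: if no such $L$ exists, then for every hyperplane $H=V(L)\subset\mathbb{P}^m$ the restriction $W|_H\subseteq S_d(m-1,\fq)$ is nonzero of some dimension $s_H\ge 1$, whence $|V(W)\cap H|\le e_{s_H}(d,m-1;q)$ is controlled by the inductive hypothesis in $m$ (or, for small $s_H$, by the earlier cases listed in the introduction). The point--hyperplane incidence identity $\sum_H|V(W)\cap H|=|V(W)(\fq)|\cdot\pi_{m-1}(q)$ together with these dimension-by-dimension bounds and a careful pigeonhole over the distribution $\{s_H\}_H$ would yield $|V(W)(\fq)|<f_r(d,m;q)$ exactly when $q\ge\max\{d+e+(e^2-1)/(d-(e+1)),\,d-1+e^2-e\}$. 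The two thresholds correspond to the two worst-case regimes for this pigeonhole---one where many hyperplanes have $s_H$ close to $1$ and one where $s_H$ is concentrated at a larger value---and balancing both simultaneously against the target $f_r(d,m;q)$ is the main technical difficulty.
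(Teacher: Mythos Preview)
This theorem is not proved in the present paper at all: it is quoted verbatim as \cite[Theorem 1.12]{singhal2025conjecturebeelendattaghorpade}, a result from the authors' companion paper, and is used here only as background for the $m=2$ discussion. There is therefore no proof in this paper against which to compare your proposal.

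A couple of remarks on the proposal itself. First, the lower bound $e_r(d,m;q)\ge f_r(d,m;q)$ is already available in full generality (Proposition~\ref{Lower bound for erdm}, due to Beelen--Datta--Ghorpade), so your construction is unnecessary. Second, your upper-bound outline mixes two inductions without making the structure explicit: the main induction is on $d$ (equivalently on $e$, with $t=d-e-1$ fixed), but the step ``no common linear factor $\Rightarrow$ bound via hyperplane sections'' invokes $e_{s_H}(d,m-1;q)$, which is the same problem in one lower ambient dimension at the \emph{same} degree $d$; this requires a separate induction on $m$ or an independent bound, neither of which you supply. Third, the passage from the incidence identity $\sum_H |V(W)\cap H|=|V(W)(\fq)|\cdot\pi_{m-1}(q)$ and the per-hyperplane bounds to the contradiction $|V(W)(\fq)|<f_r(d,m;q)$ is the entire content of the argument, and ``a careful pigeonhole over the distribution $\{s_H\}$'' is not a proof; in particular you have not explained why the two specific thresholds $d+e+\frac{e^2-1}{d-(e+1)}$ and $d-1+e^2-e$ arise, only asserted that they do. The identity $f_r(d,m;q)=f_r(d-1,m;q)+q^{m-1}$ and the identification $\beta_1=d-e-1$ are correct, so the inductive framework is sound once a linear factor is produced; the real work, however, is entirely in that production step, and your sketch does not carry it out.
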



In summary, when $m=2$, Conjecture~\ref{Conj: complete GDC} holds in the following cases:
\begin{enumerate}
    \item $1\leq r\leq \binom{2+2}{2}=6$ and $q\geq d+1$;
    \item $\binom{d+1}{2}< r\leq \binom{d+2}{2}$ and $q\geq d+1$;
    \item For $2\leq e\leq d-2$ with
    \begin{align*}
    &\tbinom{e+2}{2}<r\leq \tbinom{e+3}{2},
    &
    q&\geq \max\{d+e+\frac{e^2-1}{d-(e+1)}, d-1+e^2-e\}.
    \end{align*}
\end{enumerate}

One of our main results is that when $m=2$, Conjecture~\ref{Conj: complete GDC} is true for any $q\geq d+1$.
\begin{theorem}\label{GDC for m=2}
Given positive integers $d$, $r$ such that $1\leq r\leq \binom{d+2}{2}$ and $q\geq d+1$. We have
\[
e_r(d,2;q)=f_r(d,2;q).
\]
\end{theorem}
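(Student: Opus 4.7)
My plan is to combine the paper's main new theorem on zero-dimensional intersections of plane curves of the same degree with a reduction that factors out the common component of a linear system. Let $Z_r(D,2;q)$ denote the maximum of $|V(W)(\fq)|$ over $W \subseteq S_D(2,\fq)$ with $\dim W = r$ and $V(W)$ zero-dimensional (when such subspaces exist). The paper's central new contribution, announced in the abstract and presumably established in the preceding sections, gives an explicit formula for $Z_r(D,2;q)$; I would use this as a black box.

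Given an arbitrary $W \subseteq S_d(2,\fq)$ with $\dim W = r$, let $g$ be the gcd of all polynomials in $W$ and set $s = \deg g$. Because $\dim \mathbb{P}^2 = 2$, every positive-dimensional component of $V(W)$ is contained in $V(g)$, so writing $W = g \cdot W'$ produces $W' \subseteq S_{d-s}(2,\fq)$ with $\dim W' = r$, $\gcd(W') = 1$, and hence $V(W')$ zero-dimensional; this also forces $r \leq \binom{d-s+2}{2}$, constraining the admissible $s$. The set-theoretic decomposition $V(W) = V(g) \cup V(W')$, together with the S\o{}rensen bound $|V(g)(\fq)| \leq sq + 1$ (valid since $s \leq d \leq q-1$), yields
\[
|V(W)(\fq)| \;\leq\; sq + 1 + Z_r(d-s,2;q).
\]
Taking the maximum over admissible $s$ bounds $e_r(d,2;q)$ from above. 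A matching lower bound comes from choosing $g$ to be a product of $s$ lines realizing S\o{}rensen's bound and $W'$ realizing $Z_r(d-s,2;q)$, arranged by a projective change of coordinates so that $V(g)$ and the support of $V(W')$ are disjoint.

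It then remains to verify the combinatorial identity
\[
\max_{0 \leq s \leq d,\; r \leq \binom{d-s+2}{2}} \bigl(sq + 1 + Z_r(d-s,2;q)\bigr) \;=\; f_r(d,2;q).
\]
The main obstacle is, of course, establishing the formula for $Z_r(d,2;q)$ itself---this is the genuinely new combinatorial-geometric content of the paper, and the reduction above is useful only once that formula is in hand. After that, verifying the identity is a case analysis organized by the index $l$ of Proposition~\ref{Prop: simple expression frdm}: using the simple form $f_r(d,2;q) = H_r(d,2;q) + \pi_{1-l}(q)$ for $m=2$, one compares the contribution from each admissible $s$ and checks that the optimum is attained at the value of $s$ naturally dictated by $l$ (roughly, $s = \beta_1 - \beta_l$, the portion of the monomial decomposition of $\omega_r(d,2)$ that can be absorbed into a common factor). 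The previously established cases listed after Theorem~\ref{Thm l=1 with e} (small $r$, large $r$, and intermediate $r$ with large $q$) provide useful consistency checks and anchor the boundary cases $s = 0$ and $s = d$ of the maximization.
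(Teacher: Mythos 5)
This is essentially the paper's own argument: factor out $g=\gcd(W)$, bound the curve part by S\o{}rensen's $sq+1$, bound the residual zero-dimensional part by the formula for $u_r(d-s,2)$ from Theorem~\ref{Thm: urd2}, and recombine. The only differences are cosmetic and one small slip: the paper does not need the full max-identity you state, since the lower bound $e_r\geq f_r$ is simply quoted from Proposition~\ref{Lower bound for erdm}, and the upper bound for every admissible $s$ follows in one line from Proposition~\ref{Prop: H' smaller than H} together with Lemma~\ref{Lem d-c to d} (here $s\geq 1$ forces $r\leq\binom{d+1}{2}$, i.e.\ $l=1$); also the optimum in your maximization sits at $s=\beta_1$ when $l=1$ (and $s=0$ when $l\geq 2$), not at $s=\beta_1-\beta_l$.
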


\subsection{Zero-dimensional conjecture}

Let $\kappa$ be an algebraically closed field.
Let $S_{d}(m,\kappa)$ be the $d^{th}$ graded component of $\kappa[x_0,\dots,x_m]$.
For $m\leq r\leq \binom{m+d}{d}$, denote
\[
u_r(d,m)
:=\max\{|V(W)| : W\subseteq S_d(m,\kappa), \dim(W)=r,\dim(V(W))=0\}.
\]

This definition is similar to $e_r(d,m;q)$, except for the requirement that $V(W)$ be of dimension zero. We need $r\geq m$, as otherwise the vanishing set of $r$ polynomials cannot be of dimension~$0$.
Also note that because $V(W)$ is zero-dimensional, we can directly take the cardinality of $V(W)$, without reference to a finite field $\fq$. In fact, in this definition, $\kappa$ can be any algebraically closed field, possibly of characteristic zero.

We will introduce some notation to state our conjecture for an exact formula of $u_r(d,m)$.
Let
\[
\Omega'(d,m)
:=\Big\{
(\alpha_1,\alpha_{2},\dots,\alpha_{m+1})\in \N_0^{m+1}
\mid \sum_{i=1}^{m+1}\alpha_i=d, 
d\notin\{\alpha_2,\dots,\alpha_{m}\}  
\Big\}.
\]
So $\Omega'(d,m)$ is obtained from $\Omega(d,m)$ by dropping $m-1$ tuples that have $d$ at the $j^{th}$ spot and $0$ everywhere else, for $2\leq j\leq m$. See Example~\ref{Eg: lower bound} for the motivation behind why these tuples are dropped.
Hence $|\Omega'(d,m)|=\binom{m+d}{d}-(m-1)$. Order $\Omega'(d,m)$ by lexicographical ordering.
Let $\omega'_r(d,m)$ be its $r^{th}$ largest element. If $\omega'_r(d,m)=(\alpha_1,\dots,\alpha_{m+1})$, we define
\[
H'_{r}(d,m)
:=\alpha_1 d^{m-1}+\dots+\alpha_m.
\]
Note that $H'_r(d,m)$ does not refer to any $q$.
\begin{conjecture}\label{Conjecture urdm}
Given $d,m\geq 1$ and $m\leq r\leq \binom{d+m}{m}$, we have
\[
u_r(d,m)=H_{r-(m-1)}'(d,m).
\]
\end{conjecture}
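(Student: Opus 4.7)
The plan is to establish $u_r(d,m)=H'_{r-(m-1)}(d,m)$ by matching a lower bound from an explicit construction against an upper bound proved by induction. Throughout, set $(\alpha_1,\dots,\alpha_{m+1})=\omega'_{r-(m-1)}(d,m)$, so the target value is $\alpha_1 d^{m-1}+\cdots+\alpha_m$.

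For the lower bound, I would build $W\subseteq S_d(m,\kappa)$ of dimension $r$ with $\dim V(W)=0$ and $|V(W)|=\alpha_1 d^{m-1}+\cdots+\alpha_m$ using a nested staircase of hyperplanes. Fix $d$ distinct constants $c_1,\dots,c_d\in\kappa$ and for each coordinate $i\geq 1$ let $\ell_{i,j}=x_i-c_j x_0$; the common grid is $d^m$ points in the affine chart $x_0\neq 0$. I would take $\alpha_1$ slabs in the $x_1$-direction contributing $\alpha_1 d^{m-1}$ points, then inside a remaining slab take $\alpha_2$ slabs in the $x_2$-direction contributing $\alpha_2 d^{m-2}$ points, and iterate through all coordinates down to a contribution of $\alpha_m$ points along the last line. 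The exclusion in $\Omega'(d,m)$ of tuples with $d$ in an interior slot matches precisely those staircase choices that would fail to cut out a zero-dimensional locus, so dropping them is forced by the geometry. Verifying $\dim W=r$ reduces to a Hilbert-function computation for the resulting set of grid points.

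For the upper bound, I would induct on $d$, with nested inductions on $r$ and $m$. The base case $r=m$ is Bezout's theorem, giving $d^m=H'_1(d,m)$ since $\omega'_1(d,m)=(d,0,\dots,0)$. For the inductive step, pick $f\in W$ and factor $f=f_1\cdots f_s$ into irreducibles of degrees $d_1,\dots,d_s$. Write $V(W)$ as the union of $V(W)\cap V(f_i)$, and on each irreducible hypersurface $V(f_i)$ bound $|V(W)\cap V(f_i)|$ using the image of $W$ in $S_d(m,\kappa)/(f_i)$, which cuts out a zero-dimensional subscheme on $V(f_i)$. The inductive hypothesis should be set up so that summing the piecewise bounds across components reproduces $\alpha_1 d^{m-1}+\cdots+\alpha_m$.

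The hard part will be making this upper-bound induction rigorous for general $m$. In the $m=2$ case treated by Theorem~\ref{GDC for m=2}, the restriction of $W$ to an irreducible component of $f$ is a linear series on a curve, and one has Bezout for curves together with tight control over reducible plane curves. For $m\geq 3$ the components $V(f_i)$ can be singular hypersurfaces of arbitrary degree in $\PPP^m$, and induction on $m$ demands a version of the statement valid on an arbitrary projective variety rather than only on $\PPP^{m-1}$. Moreover, the combinatorics of how $\omega'_j(d,m)$ evolves with $r$ does not align cleanly with how intersection counts split across irreducible components of varying degrees, so one likely needs a strengthened inductive hypothesis tracking more refined data than the single integer $u_r(d,m)$. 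Identifying such a strengthening that is simultaneously sharp enough to imply the conjecture and stable under restriction to hypersurfaces is where I expect the decisive difficulty to lie, and is presumably why the conjecture is left open for $m\geq 3$ even as this paper settles $m=2$.
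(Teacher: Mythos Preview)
The statement you are attempting is a \emph{conjecture} in the paper, not a theorem: the paper does not prove it in general, only the special cases listed in Theorem~\ref{thm: cases of urdm} (namely $d=1$, $m\leq 2$, $r=m$, and $\binom{m+d}{d}-d\leq r$). So there is no complete proof to compare against, and any proposal that claims to settle the general case must do something the authors could not.

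Your lower-bound sketch is essentially the paper's construction in Section~\ref{Sec: lower bound}: the nested staircase of hyperplane slabs is exactly what produces the sets $Y_i$ and the space $W=W_1\oplus\cdots\oplus W_m\oplus\langle F_{l+1},\dots,F_m\rangle$ in the proof of Proposition~\ref{Prop: lower bound urdm}. The dimension count $\dim W=r$ is handled there by an explicit direct-sum argument and Lemma~\ref{r as function of alpha}, not by a Hilbert-function computation, but the content is the same.

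Your upper-bound plan, however, diverges from what the paper actually does even in the $m=2$ case it settles. You propose to pick $f\in W$, factor it, and restrict $W$ to each irreducible component $V(f_i)$, hoping to induct on $m$. The paper does \emph{not} do this. For $m=2$ it instead controls the quantity $g_X(d)=|X|_{\mm}-\dim S_d(X)$, the failure of $X$ to impose independent conditions, via the Cayley--Bacharach theorem (Theorem~\ref{Thm: CB7} and Corollary~\ref{Cor: g gamma' OP2 gamma''}). The key step, Proposition~\ref{Prop: cond on gx udr}, shows that if $X$ sits inside a complete intersection of two degree-$d$ curves and $|X|_{\mm}\leq td+d-t+1$, then $g_X(d)\leq\binom{t-1}{2}$; this is proved by induction on $d$, with the residual subscheme $X'$ and the space $I_{d-3}(X')$ carrying the inductive load. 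Your restriction-to-components idea would send a plane curve of degree $d$ to a curve of potentially high genus, not to $\PPP^1$, so it does not reduce to $m=1$ in any usable way; the paper's Cayley--Bacharach approach sidesteps this entirely.

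You are right that the obstruction for $m\geq 3$ is the upper bound, and right that a strengthened inductive hypothesis is probably needed. But the $m=2$ argument in the paper already shows that ``restrict and induct on $m$'' is not the mechanism that works even in the first nontrivial case; any attack on $m\geq 3$ would more plausibly need a higher-dimensional analogue of the Cayley--Bacharach dualities, and those are genuinely harder to come by.
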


We show that our conjectured formula is a lower bound for $u_r(d,m)$.
\begin{proposition}\label{Prop: lower bound urdm}
For positive integers $m,d,r$ with $m\leq r\leq \binom{d+m}{m}$, we have
\[
u_r(d,m)\geq H_{r-(m-1)}'(d,m).
\]
\end{proposition}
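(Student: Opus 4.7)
The plan is to exhibit an explicit $r$-dimensional subspace $W \subseteq S_d(m,\kappa)$ of linearly independent polynomials whose common zero locus is zero-dimensional and contains a distinguished set $S$ of size $H'_{r-(m-1)}(d,m)$. Write $\omega'_{r-(m-1)}(d,m)=(\alpha_1,\ldots,\alpha_{m+1})$, and for each $i \in \{1,\ldots,m\}$ fix $d$ distinct scalars $\lambda_{i,1},\ldots,\lambda_{i,d} \in \kappa$. For $\beta = (\beta_1,\ldots,\beta_{m+1}) \in \N^{m+1}$ with $\sum \beta_i = d$, set
\[G_\beta := x_0^{\beta_{m+1}} \prod_{i=1}^{m} \prod_{j=1}^{\beta_i}(x_i - \lambda_{i,j} x_0) \in S_d(m,\kappa).\]
I will take $W$ to be spanned by the $m-1$ polynomials $F_i := G_{d\,e_i}$ for $i = 2,\ldots,m$ (where $d\,e_i$ denotes the tuple with $d$ in coordinate $i$; these are exactly the $m-1$ tuples removed from $\Omega(d,m)$ to form $\Omega'(d,m)$) together with the $r-(m-1)$ polynomials $G_{\omega'_k(d,m)}$ for $k = 1,\ldots,r-(m-1)$. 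Since $\omega'_1(d,m) = (d,0,\ldots,0)$, the polynomial $F_1$ is already in this list, so the count is exactly $r$. For the target set I will take $S := \bigsqcup_{l=1}^{m} S_l$ inside the affine chart $\{x_0 \neq 0\}$, where
\[S_l := \{(\lambda_{1,\alpha_1+1},\ldots,\lambda_{l-1,\alpha_{l-1}+1}, \lambda_{l,j}, a_{l+1},\ldots,a_m) : 1 \le j \le \alpha_l,\ a_k \in \{\lambda_{k,1},\ldots,\lambda_{k,d}\}\},\]
so $|S| = \sum_{l=1}^{m} \alpha_l\, d^{m-l} = H'_{r-(m-1)}(d,m)$.

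Linear independence of the chosen generators will come from a descending lex argument: the monomial $x^\beta := x_0^{\beta_{m+1}} x_1^{\beta_1} \cdots x_m^{\beta_m}$ has coefficient $1$ in $G_\beta$ and can appear with nonzero coefficient in $G_{\beta'}$ only when $\beta'_i \ge \beta_i$ for every $i \le m$. Since $\beta$ and $\beta'$ share total sum $d$, this forces either $\beta' = \beta$ or $\beta' >_{\mathrm{lex}} \beta$, so the coefficient of $x^\tau$ in any alleged dependence relation reduces (after inductively eliminating lex-larger tuples) to $c_\tau = 0$. Zero-dimensionality of $V(W)$ is immediate from the inclusion $V(W) \subseteq V(F_1,\ldots,F_m)$: each $F_i$ specialises to $x_i^d$ on $\{x_0 = 0\}$, so no projective points at infinity survive, and $V(F_1,\ldots,F_m)$ reduces to the finite $d^m$-point affine grid $\{(\lambda_{1,j_1},\ldots,\lambda_{m,j_m})\}$.

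The delicate step, which I expect to require the main bookkeeping, is the inclusion $S \subseteq V(W)$. Vanishing of the $F_i$ on $S$ is immediate from the definition of the strata. For each $G_{\omega'_k(d,m)}$, writing $\omega'_k(d,m) = (\beta_1,\ldots,\beta_{m+1})$ and taking any $a \in S_l$, I let $l^*$ be the smallest index where $\beta$ and $\alpha$ differ (with the convention $l^* = \infty$ if $\beta = \alpha$); the lex inequality $\omega'_k \ge_{\mathrm{lex}} \omega'_{r-(m-1)}$ gives $\beta_{l^*} \ge \alpha_{l^*} + 1$ whenever $l^*$ is finite. I will split into two subcases. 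If $l^* < l$ then $a_{l^*} = \lambda_{l^*, \alpha_{l^*} + 1}$ lies in $\{\lambda_{l^*,1},\ldots,\lambda_{l^*,\beta_{l^*}}\}$, so the linear factor $(x_{l^*} - \lambda_{l^*, \alpha_{l^*}+1}\, x_0)$ of $G_{\omega'_k(d,m)}$ vanishes at $a$. Otherwise $\beta_l \ge \alpha_l$ and $a_l = \lambda_{l,j}$ with $j \le \alpha_l \le \beta_l$, so the factor $(x_l - \lambda_{l,j}\, x_0)$ of $G_{\omega'_k(d,m)}$ vanishes at $a$. Once this lex-driven case analysis is carried out, $S \subseteq V(W)$ yields $u_r(d,m) \ge |V(W)| \ge |S| = H'_{r-(m-1)}(d,m)$.
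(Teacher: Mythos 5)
Your proposal is correct and follows the same overall strategy as the paper --- an explicit construction of an $r$-dimensional $W$ together with a ``staircase'' point set of size $H'_{r-(m-1)}(d,m)$ inside $V(W)$ --- but your realization of $W$ is genuinely different and in places cleaner. The paper builds $W$ as a sum $W_1+\dots+W_m+\langle F_{l+1},\dots,F_m\rangle$ of shifted polynomial-ring slices $W_i=g_l\cdots g_i x_i\,\kappa[x_0,x_i,\dots,x_m]_{d-1-\alpha_l-\dots-\alpha_i}$; it must then prove this sum is direct by a three-case analysis and invoke Lemma~\ref{r as function of alpha} to see that the dimensions add up to $r$. You instead take as a basis the $r$ explicit products of linear forms $G_\beta$ indexed by the top $r-(m-1)$ tuples of $\Omega'(d,m)$ together with the $m-1$ dropped tuples $d\,e_i$; linear independence falls out of the lex-triangularity of the leading monomials $x^\beta$, and the count $\dim W=r$ is then automatic, so you bypass both the direct-sum case analysis and the combinatorial identity. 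Your verification that $S\subseteq V(W)$ via the first index $l^*$ at which $\omega'_k(d,m)$ and $\alpha$ differ is sound: the lex inequality gives $\beta_{l^*}\geq\alpha_{l^*}+1$, which is exactly what is needed for the relevant linear factor of $G_{\omega'_k(d,m)}$ to vanish at each point of $S_l$ in both of your subcases. Your point set differs from the paper's $Y$ only cosmetically (you pin the first $l-1$ coordinates at $\lambda_{k,\alpha_k+1}$ where the paper uses $0=a_d$). Two trivial boundary remarks you should add for completeness: $\lambda_{k,\alpha_k+1}$ is undefined when $\alpha_k=d$, but this can only happen for $k=1$, in which case every stratum $S_l$ with $l\geq 2$ is empty so the issue never arises; and the case $r=\binom{m+d}{d}$, where $H'_{r-(m-1)}(d,m)=0$, is vacuous, as the paper also notes.
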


Moreover, we prove the following special cases of Conjecture~\ref{Conjecture urdm}.

\begin{theorem}\label{thm: cases of urdm}
Given positive integers $d,m,r$ with $m\leq r\leq \binom{d+m}{m}$, if at least one of the following holds
\begin{enumerate}
    \item $d=1$;
    \item $m=1$;
    \item $m=2$;
    \item $r=m$;
    \item $\binom{m+d}{d}-d\leq r\leq \binom{m+d}{d}$,
\end{enumerate}
then we have
\[u_r(d,m)=H_{r-(m-1)}'(d,m).\]
\end{theorem}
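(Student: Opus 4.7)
The lower bound is Proposition~\ref{Prop: lower bound urdm}, so in each case we need only the matching upper bound $u_r(d,m)\le H'_{r-(m-1)}(d,m)$.

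Four of the five cases admit short arguments. For $d=1$, $r$ linearly independent linear forms in $\mathbb{P}^m$ cut out a projective subspace of dimension $m-r$; zero-dimensionality forces $r\in\{m,m+1\}$ giving $1$ or $0$ points, matching $H'_1(1,m)=1$ and $H'_2(1,m)=0$. For $m=1$, if the $r$ polynomials in $W$ share a GCD of degree $e$, then $W$ lies in a $(d-e+1)$-dimensional subspace, forcing $e\le d-r+1$, hence $|V(W)|\le e\le d-r+1=H'_r(d,1)$. For $r=m$, Bezout applied to $m$ hypersurfaces of degree $d$ meeting in dimension zero gives $|V(W)|\le d^m=H'_1(d,m)$. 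For the range $\binom{m+d}{d}-d\le r\le\binom{m+d}{d}$, set $s=\binom{m+d}{d}-r\le d$; a direct lex comparison yields $H'_{r-(m-1)}(d,m)=s$. If $|V(W)|\ge s+1$, pick any $s+1\le d+1$ distinct points $P_0,\dots,P_s$ of $V(W)$; for each $P_i$, a product of $s$ linear forms each vanishing at one $P_j$ with $j\neq i$ but not at $P_i$, padded by $d-s$ linear forms not vanishing at $P_i$, yields a polynomial in $S_d(m,\kappa)$ vanishing on $\{P_j\}_{j\neq i}$ but not on $P_i$. These $s+1$ independent conditions force $\dim W\le\binom{m+d}{d}-(s+1)=r-1$, a contradiction.

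The remaining case $m=2$ is the substantive one; its proof precedes and is invoked in the proof of Theorem~\ref{GDC for m=2}. The plan is a direct analysis in $\mathbb{P}^2$ over the algebraically closed field $\kappa$. Since $V(W)$ is zero-dimensional, no positive-degree polynomial divides every element of $W$, so we pick $f_1,f_2\in W$ with no common factor; by Bezout, $Z:=V(f_1)\cap V(f_2)$ is a zero-dimensional complete intersection of degree $d^2$, and $V(W)\subseteq Z$ as a set. The Koszul resolution of $Z$ yields $h_Z(d)=\binom{d+2}{2}-2$, and the remaining $r-2$ polynomials of $W$ cut out a subscheme of $Z$ of size $|V(W)|$. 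The bound $|V(W)|\le H'_{r-1}(d,2)$ will come from a Cayley-Bacharach-type analysis of which subsets $X\subseteq Z$ admit at least $r$ linearly independent degree-$d$ polynomials vanishing on $X$, matched against the lex-ordered tuples $\omega'_{r-1}(d,2)\in\Omega'(d,2)$.

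The main obstacle is this Cayley-Bacharach analysis. Classical Cayley-Bacharach immediately rules out $|V(W)|=d^2$, but the allowed bound $H'_{r-1}(d,2)$ decreases with $r$ more sharply than the classical statement alone provides, so one must systematically track how the Hilbert function $h_X(d)$ of each admissible subscheme $X\subseteq Z$ degrades as points of $Z$ are removed. A layered induction guided by the lex ordering on $\Omega'(d,2)$ is required. The combinatorial significance of dropping the tuple $(0,d,0)$ from $\Omega'(d,2)$ is geometric: the corresponding extremal configuration in the $e_r$-analysis arises from a common linear factor among the generators of $W$, which would force $V(W)$ to contain a projective line and thus violate zero-dimensionality.
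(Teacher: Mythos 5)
Your treatment of cases (1), (2), (4) and (5) is correct. For (5) you reprove, essentially as the paper does, that at most $d+1$ points impose independent conditions on degree-$d$ forms and derive the contradiction $\dim W\le r-1$; your direct arguments for $d=1$ and $m=1$ are valid (the paper instead obtains these as corollaries of case (5), since for $d=1$ or $m=1$ every admissible $r$ lies in the range of case (5)); and for $r=m$ the refined Bezout bound $|V(W)|\le d^m$ is exactly what is needed (the paper cites a weighted-degree inequality from its predecessor, but the content is the same).

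The case $m=2$, however, is not proved: what you give is a plan, and you yourself flag that ``the main obstacle is this Cayley--Bacharach analysis'' without carrying it out. This case is the substantive content of the theorem and occupies all of Section~\ref{Sec: m=2 urdm}. Concretely, what is missing is the quantitative statement that drives everything: for coprime $G_1,G_2\in S_d(2,\kappa)$ and any subscheme $X\subseteq V(G_1,G_2)$ with $|X|_{\mm}\le td+d-t+1$ (where $3\le t\le d$), one has $g_X(d)\le\binom{t-1}{2}$ (Proposition~\ref{Prop: cond on gx udr}), together with the translation of this bound into $u_r(d,2)\le td+1+\binom{d-t+2}{2}-r$ on the block $1+\binom{d-t+1}{2}<r\le 1+\binom{d-t+2}{2}$ (Lemma~\ref{Lem: cond on gx implies udr}), and the separate top range $1+\binom{d-1}{2}<r\le\binom{d+2}{2}$ where one shows $g_X(d)=0$ for $|X|_{\mm}\le 3d-1$ (Lemma~\ref{Lem: 3d-1 independent cond}). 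The proof of the key inequality is not a ``layered induction guided by the lex ordering'' as you suggest, but an induction on $d$: one passes to the residual scheme $X'$ of $X$ in the complete intersection, uses Corollary~\ref{Cor: g gamma' OP2 gamma''} to identify $g_X(d)$ with $\dim(I_{d-3}(X'))$, factors out $g=\gcd(I_{d-3}(Y))$ of degree $e$ to write $I_{d-3}(Y)=gW$, bounds $|V(g)\cap\Gamma|_{\mm}\le ed$ by Bezout and $|V(W)|_{\mm}\le u_{r_1}(d-3-e,2)$ by the inductive hypothesis, and finally optimizes the resulting quadratic in $e$ over $0\le e\le d-t-1$ to reach a contradiction. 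None of these steps, nor the case split on $e$ that makes the numerology close up exactly, appears in your proposal, so the theorem is established only outside the $m=2$ case.
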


This paper is organized as follows. We introduce some preliminaries in Section~\ref{Sec: Prelim}. In Section~\ref{Sec: lower bound}, we show that our conjectured formula is a lower bound for $u_r(d,m)$, proving Proposition~\ref{Prop: lower bound urdm}.
In Section~\ref{Sec: special cases}, we prove all parts of Theorem~\ref{thm: cases of urdm} except $m=2$. In Section~\ref{Sec: m=2 urdm} we prove the $m=2$ part of Theorem~\ref{thm: cases of urdm}. In Section~\ref{Sec: m=2 erdm} we prove Theorem~\ref{GDC for m=2}.
The proofs of some technical lemmas are deferred to Appendix~\ref{Appendix}.

\section{Preliminaries}\label{Sec: Prelim}

\subsection{Cayley-Bacharach Theorem}
Throughout this paper, $\kappa$ will denote an algebraically closed field.




\begin{definition}\label{Def: multiplicity and residual}
Let $\Gamma$ be a zero-dimensional subscheme of $\Pm(\kappa)$.
Let $I(\Gamma)$ be its homogeneous ideal, and let
\[
S(\Gamma)=\kappa[x_0,\dots,x_m]/I(\Gamma)
\]
be its graded coordinate ring.
We denote their $k^{th}$ homogeneous pieces as $I_k(\Gamma)$ and $S_k(\Gamma)$ respectively.

\begin{itemize}
    \item For $P \in \Supp(\Gamma)$, let $\mathcal O_{\Gamma,P}$ be the local ring of $\Gamma$ at $P$.
    The \emph{multiplicity} of $\Gamma$ at $P$ is
    \[
    \mult_P(\Gamma)=\dim_\kappa \mathcal O_{\Gamma,P}.
    \]
    See \cite[Definition 7.4]{hartshorne2013algebraic} for more details.

    \item We denote by $|\Gamma|_{\mm}$ the number of points of $\Gamma$ counted with multiplicity, namely
    \[
    |\Gamma|_{\mm}
    :=\sum_{P \in \Supp(\Gamma)} \mult_P(\Gamma).
    \]
    This is often called the length of the scheme $\Gamma$.
    We denote by $|\Gamma|$ the number of points of $\Gamma$ counted without multiplicity, that is,
    \[
    |\Gamma|:=|\Supp(\Gamma)|.
    \]

    \item Suppose that $\Gamma' \subseteq \Gamma$ are subschemes of $\Pm(\kappa)$.
    The \emph{residual subscheme} of $\Gamma'$ in $\Gamma$ is the subscheme $\Gamma''$ with ideal
    \[
    I(\Gamma'')
    = \Ann_{\kappa[x_0, \dots x_m]}\big(I(\Gamma')/I(\Gamma)\big).
    \]
\end{itemize}
\end{definition}


Note that while we can define the residual subscheme in general, this notion is well behaved only when $\Gamma$ is a complete intersection.
For example, we have $|\Gamma|_{\mm}= |\Gamma'|_{\mm}+|\Gamma''|_{\mm}$ when $\Gamma$ is a complete intersection (see \cite{eisenbud1996cayley}).

\begin{example}
\begin{enumerate}
    \item Suppose $I(\Gamma)=(x^2,xy,y^2)$, it is not a complete intersection. Take $I(\Gamma')=(x,y)$. Then, we have $I(\Gamma'')=(x,y)$. Note that $|\Gamma|_{\mm}=3$, $|\Gamma'|_{\mm}=1$ and $|\Gamma''|_{\mm}=1$, so $|\Gamma|_{\mm}\neq |\Gamma'|_{\mm}+|\Gamma''|_{\mm}$.
    \item Suppose $I(\Gamma)=(x^2,y^2)$, it is a complete intersection. Take $I(\Gamma')=(x,y)$. Then, we have $I(\Gamma'')=(xy,x^2,y^2)$. Note that $|\Gamma|_{\mm}=4$, $|\Gamma'|_{\mm}=1$ and $|\Gamma''|_{\mm}=3$, so $|\Gamma|_{\mm}= |\Gamma'|_{\mm}+|\Gamma''|_{\mm}$.
\end{enumerate}
\end{example}

For a zero-dimensional subscheme $\Gamma\subseteq \Pm(\kappa)$, each point $P\in\Supp(\Gamma)$ has multiplicity $1$ if and only if $\Gamma$ is reduced.
Reduced zero-dimensional subschemes of $\Pm(\kappa)$ can be identified with finite subsets of $\Pm(\kappa)$.

For a zero-dimensional subscheme $\Gamma\subseteq \Pm(\kappa)$, and nonnegative integer $k$, we denote
\[
g_{\Gamma}(k)
:=|\Gamma|_{\mm}-\dim(S_k(\Gamma)) 
=\dim(I_{k}(\Gamma))-\tbinom{m+k}{k}+|\Gamma|_{\mm}.
\]
Note that $I_{k}(\Gamma)$ is the vector space of degree $k$ polynomials that vanish on $\Gamma$. 
Next, $\dim(S_k(\Gamma))$ is the Hilbert function, giving us the dimension of the $k^{th}$ homogeneous piece of the coordinate ring, and it can also be viewed as the number of independent conditions imposed by $\Gamma$ on degree $k$ polynomials. Finally, $g_{\Gamma}(k)$ is the failure of the points in $\Gamma$ to impose independent conditions on polynomials of degree $k$.

If $X\subseteq \Pm(\kappa)$ is a finite subset, then it can be turned into a reduced zero-dimensional scheme. We see that
\[
g_{X}(k)
=\dim(I_{k}(X))-\tbinom{m+k}{k}+|X|.
\]

Noether computed the Hilbert function for the zero-dimensional subschemes of $\Pt$ that are obtained as the intersection of two curves.
\begin{proposition}\label{prop: intersection of two plane curves} \cite{noether1873ueber,eisenbud1996cayley}
Suppose $X_1$ and $X_2$ are curves in $\Pt(\kappa)$ of degrees $a$ and $b$ with no common components. Consider the zero-dimensional scheme $\Gamma=X_1\cap X_2$. Then for any $k$, the Hilbert function of the scheme $\Gamma$ can be written as
\[\dim(S_k(\Gamma))=\tbinom{k+2}{2}-\tbinom{k-a+2}{2}-\tbinom{k-b+2}{2}+ \tbinom{k-a-b+2}{2}.\]
\end{proposition}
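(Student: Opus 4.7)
The plan is to derive the formula from a free resolution of the homogeneous coordinate ring of $\Gamma$. Let $F,G\in S=\kappa[x_0,x_1,x_2]$ be homogeneous forms of degrees $a,b$ defining $X_1, X_2$ respectively. Since $X_1$ and $X_2$ share no common components, $F$ and $G$ have no common irreducible factor in the UFD $S$, so $F$ is a nonzerodivisor in $S$ and $G$ is a nonzerodivisor in $S/(F)$. Thus $(F,G)$ is a regular sequence; in particular a zero-dimensional complete intersection in $\mathbb{P}^2$ is arithmetically Cohen--Macaulay, so the saturated ideal $I(\Gamma)$ coincides with $(F,G)$ in every degree and $S_k(\Gamma)=(S/(F,G))_k$.

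With the regular-sequence property in hand, the Koszul complex on $(F,G)$ provides an exact sequence of graded $S$-modules
\[0 \longrightarrow S(-a-b) \longrightarrow S(-a)\oplus S(-b) \longrightarrow S \longrightarrow S/(F,G) \longrightarrow 0,\]
where the leftmost nonzero map is $h\mapsto (Gh,-Fh)$ and the middle map is $(p,q)\mapsto Fp+Gq$. Restricting to the degree-$k$ piece, using $\dim S_n=\binom{n+2}{2}$ with the convention $\binom{n+2}{2}=0$ for $n<0$, the alternating sum of dimensions immediately gives
\[\dim S_k(\Gamma)=\binom{k+2}{2}-\binom{k-a+2}{2}-\binom{k-b+2}{2}+\binom{k-a-b+2}{2},\]
which is the claimed formula.

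The only substantive step is verifying exactness of the Koszul complex, and this is precisely where the no-common-component hypothesis is used: it forces $(F,G)$ to be a regular sequence. Everything else is bookkeeping, reading the Hilbert function of $S/(F,G)$ off the resolution. An alternative route, avoiding any explicit mention of Koszul complexes, is to splice the two short exact sequences $0\to S(-a)\xrightarrow{F} S\to S/(F)\to 0$ and $0\to (S/F)(-b)\xrightarrow{G} S/F\to S/(F,G)\to 0$, where the second is exact because $G$ is a nonzerodivisor modulo $F$; the same dimension count then produces the same formula.
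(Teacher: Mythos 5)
Your proof is correct. The paper does not prove this proposition at all---it is quoted from Noether and from Eisenbud--Green--Harris---and the Koszul resolution argument you give (with the key observations that ``no common component'' makes $(F,G)$ a regular sequence and that the complete intersection ideal is already saturated, so it really computes $S_k(\Gamma)$) is precisely the standard proof found in the cited reference. The only point worth making explicit is the convention $\tbinom{n+2}{2}=0$ for $n<0$ (rather than the polynomial extension of the binomial coefficient), which you do state and which is also the convention the paper uses when it manipulates this formula later.
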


\begin{corollary}\label{cor: g gamma curve a,b}
If we assume $1\leq a\leq b=k$, then we have
\[g_{\Gamma}(k)=\tbinom{a-1}{2}.\]
\end{corollary}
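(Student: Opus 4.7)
The plan is to derive the corollary by a direct substitution of $b=k$ into the Hilbert-function formula of Proposition~\ref{prop: intersection of two plane curves}, combined with the Bezout count $|\Gamma|_{\mm}=ab$ for the length of the complete intersection $\Gamma=X_1\cap X_2$ (valid since $X_1,X_2$ share no components). Throughout I will use the convention that $\binom{n}{2}=0$ whenever $n\leq 1$, which is consistent with the Hilbert-function interpretation.

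First I would note that with $b=k$ and $1\leq a\leq k$, we have $\binom{k-b+2}{2}=\binom{2}{2}=1$ and $\binom{k-a-b+2}{2}=\binom{2-a}{2}=0$. So Proposition~\ref{prop: intersection of two plane curves} collapses to
\[
\dim(S_k(\Gamma))=\tbinom{k+2}{2}-\tbinom{k-a+2}{2}-1.
\]
Combined with $|\Gamma|_{\mm}=ab=ak$, this gives
\[
g_{\Gamma}(k)=|\Gamma|_{\mm}-\dim(S_k(\Gamma))=ak-\tbinom{k+2}{2}+\tbinom{k-a+2}{2}+1.
\]

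The remaining step is a routine simplification: expanding $\binom{k+2}{2}-\binom{k-a+2}{2}$ telescopes into $a$ consecutive binomial differences, yielding $ak-\tfrac{a^2}{2}+\tfrac{3a}{2}$. Substituting this back produces
\[
g_{\Gamma}(k)=\frac{a^2-3a+2}{2}=\tbinom{a-1}{2},
\]
as claimed. There is no serious obstacle here; the only thing one needs to be a little careful about is the vanishing of the binomials with nonpositive top argument, which is why the hypothesis $a\leq b=k$ is used (to ensure $\binom{k-a+2}{2}$ is read off correctly and $\binom{2-a}{2}$ vanishes).
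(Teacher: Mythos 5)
Your computation is correct and is exactly the intended derivation: the paper states this corollary without proof as an immediate consequence of Proposition~\ref{prop: intersection of two plane curves} together with $|\Gamma|_{\mm}=ab$, and your substitution $b=k$, the vanishing conventions, and the final simplification to $\tbinom{a-1}{2}$ all check out. Nothing to add.
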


Cayley and Bacharach consider such a $\Gamma$. They further consider a subscheme $\Gamma'$ and its residual subscheme $\Gamma''$. Since $\Gamma$ is a complete intersection, we know that $ab=|\Gamma|_{\mm}=|\Gamma'|_{\mm}+|\Gamma''|_{\mm}$. Their result relates the dimension of the space of degree $k$ polynomials passing through $\Gamma'$ to the failure of $\Gamma''$ to impose independent conditions on polynomials of degree $a+b-3-k$.

\begin{theorem}\label{Thm: CB7}\cite[Theorem CB5]{eisenbud1996cayley} (Cayley-Bacharach) 
Let $X_1, X_2 \subset \Pt(\kappa)$ be plane curves of degrees $a$, $b$, respectively, such that $\Gamma=X_1 \cap X_2$ is zero-dimensional.  Let $\Gamma'$ and $\Gamma''$ be subschemes of $\Gamma$ residual to each other in $\Gamma$. 
Set $s = a+b-3$. Then, we have
\begin{equation}\label{Eq: CB5}
\dim(I_k(\Gamma'))-\dim(I_k(\Gamma))
= g_{\Gamma''}(s-k).
\end{equation}
\end{theorem}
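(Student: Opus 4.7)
The plan is to reduce the identity to a consequence of Gorenstein duality on the complete intersection $\Gamma$, and then finish by a short telescoping argument. Since $I(\Gamma) \subseteq I(\Gamma')$, there is a short exact sequence of graded $\kappa[x_0,x_1,x_2]$-modules
\[0 \to I(\Gamma')/I(\Gamma) \to S(\Gamma) \to S(\Gamma') \to 0,\]
which in degree $k$ gives $\dim I_k(\Gamma') - \dim I_k(\Gamma) = \dim_\kappa (I(\Gamma')/I(\Gamma))_k$. Writing $J = I(\Gamma')/I(\Gamma)$, the very definition of the residual subscheme says $I(\Gamma'')/I(\Gamma) = \Ann_{S(\Gamma)}(J)$, so $S(\Gamma)/\Ann_{S(\Gamma)}(J) \cong S(\Gamma'')$. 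The task therefore reduces to proving
\[\dim_\kappa J_k = |\Gamma''|_{\mm} - \dim_\kappa S_{s-k}(\Gamma'').\]

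Because $\Gamma$ is a complete intersection of plane curves of degrees $a,b$, the coordinate ring $S(\Gamma)$ is a one-dimensional Cohen--Macaulay graded Gorenstein ring with $a$-invariant $a+b-3 = s$. I would pick a linear form $\ell$ missing the supports of $\Gamma$, $\Gamma'$, and $\Gamma''$ (possible since $\kappa$ is algebraically closed and the supports are finite), so that $\ell$ is a non-zerodivisor on each of $S(\Gamma)$, $S(\Gamma')$, and $S(\Gamma'')$. The Artinian reduction $A := S(\Gamma)/\ell S(\Gamma)$ is then a complete intersection of type $(1,a,b)$ in three variables, hence an Artinian Gorenstein graded $\kappa$-algebra with socle in degree $s+1$. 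Let $\bar{J}$ denote the image of $J$ in $A$; a direct check using that $\ell$ is regular on $S(\Gamma'')$ shows that $\Ann_A(\bar{J})$ equals the image of $\Ann_{S(\Gamma)}(J)$, whence $A/\Ann_A(\bar{J}) \cong S(\Gamma'')/\ell S(\Gamma'')$. The perfect pairing $A_k \times A_{s+1-k} \to A_{s+1} \cong \kappa$ provided by Gorenstein duality then yields
\[\dim_\kappa \bar{J}_k = \dim_\kappa (A/\Ann_A(\bar{J}))_{s+1-k} = \dim S_{s+1-k}(\Gamma'') - \dim S_{s-k}(\Gamma''),\]
where the last equality follows from the short exact sequence $0 \to S(\Gamma'')(-1) \xrightarrow{\ell} S(\Gamma'') \to S(\Gamma'')/\ell S(\Gamma'') \to 0$.

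Applying the analogous sequence to $J$ gives $\dim \bar{J}_k = \dim J_k - \dim J_{k-1}$. Combining yields the recursion $\dim J_k - \dim J_{k-1} = \dim S_{s+1-k}(\Gamma'') - \dim S_{s-k}(\Gamma'')$ for every $k$, and telescoping up to a large $k_0$ (using $\dim J_{k_0} = |\Gamma''|_{\mm}$ and $\dim S_{s-k_0}(\Gamma'') = 0$) collapses both sides to $\dim J_k = |\Gamma''|_{\mm} - \dim S_{s-k}(\Gamma'') = g_{\Gamma''}(s-k)$, which is the desired identity. The main obstacle I anticipate is the bookkeeping around the Artinian reduction: one must verify that $\Ann_A(\bar{J})$ really is the image of $\Ann_{S(\Gamma)}(J)$ (a short module-theoretic check using the non-zerodivisor property of $\ell$), and that a single $\ell$ can be chosen regular on all three rings simultaneously (standard, since all minimal primes correspond to finitely many points of $\Pt(\kappa)$). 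If this becomes cumbersome, one can bypass the Artinian reduction by invoking local duality on $S(\Gamma)$ directly through its canonical module $\omega_{S(\Gamma)} = S(\Gamma)(s)$; the resulting identity is the same.
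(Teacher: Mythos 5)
The paper does not prove this statement at all: it is quoted as the classical Cayley--Bacharach theorem (in the form of Eisenbud--Green--Harris \cite{eisenbud1996cayley}), so there is no internal proof to compare yours against. That said, your argument is essentially the standard Gorenstein-duality proof of that classical result, and it is sound. The reduction to $\dim J_k=|\Gamma''|_{\mm}-\dim S_{s-k}(\Gamma'')$ with $J=I(\Gamma')/I(\Gamma)$ is correct; the Artinian reduction $A$ is a complete intersection of type $(a,b,1)$ with socle degree $s+1$; and the duality statement $\dim \mathfrak{a}_k=\dim(A/\Ann_A(\mathfrak{a}))_{s+1-k}$ for a graded ideal $\mathfrak{a}$ is the right tool. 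The two points you flag are real but both close cleanly. Choosing one linear form $\ell$ regular on all three coordinate rings is immediate since the three supports together are a finite set of points. For the identification $\Ann_A(\bar J)=\operatorname{im}\bigl(\Ann_{S(\Gamma)}(J)\bigr)$, the containment $\supseteq$ is trivial, and equality follows from a total-dimension count rather than a direct element chase: $\dim_\kappa \bar J=\lim_k\dim J_k=|\Gamma|_{\mm}-|\Gamma'|_{\mm}=|\Gamma''|_{\mm}$ and $\dim_\kappa \operatorname{im}\bigl(\Ann_{S(\Gamma)}(J)\bigr)=|\Gamma'|_{\mm}$, so by the additivity $|\Gamma|_{\mm}=|\Gamma'|_{\mm}+|\Gamma''|_{\mm}$ for complete intersections (which the paper already quotes) the image has the same total dimension $\dim_\kappa A-\dim_\kappa\bar J$ as $\Ann_A(\bar J)$, forcing equality. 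With that check inserted, the telescoping at the end is correct and yields exactly \eqref{Eq: CB5}. In short: a valid, self-contained proof of a result the paper only cites.
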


This can be simplified when $\max(a,b)-2\leq  k$.

\begin{corollary}\label{Cor: g gamma' OP2 gamma''}
Let $\Gamma$, $\Gamma'$ and $\Gamma''$ be as above.
Assuming that $\max(a,b)-2\leq  k$, we have
\[g_{\Gamma'}(k)=\dim(I_{s-k}(\Gamma'')).\]
\end{corollary}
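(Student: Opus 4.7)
The plan is to deduce Corollary~\ref{Cor: g gamma' OP2 gamma''} directly from the Cayley--Bacharach theorem (Theorem~\ref{Thm: CB7}), combined with the Noether Hilbert function formula (Proposition~\ref{prop: intersection of two plane curves}) to kill one of the two terms that appears after reindexing.

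\emph{Step 1: Swap the roles of $\Gamma'$ and $\Gamma''$ and reindex.} Since $\Gamma$ is a complete intersection, the residual relationship is symmetric: $\Gamma'$ is residual to $\Gamma''$ in $\Gamma$ iff $\Gamma''$ is residual to $\Gamma'$ in $\Gamma$. Applying Theorem~\ref{Thm: CB7} with $\Gamma'$ and $\Gamma''$ interchanged and with $k$ replaced by $s-k$ gives
\[
g_{\Gamma'}(k) \;=\; \dim(I_{s-k}(\Gamma'')) \;-\; \dim(I_{s-k}(\Gamma)).
\]
Thus the corollary reduces to showing $\dim(I_{s-k}(\Gamma)) = 0$ under the hypothesis $\max(a,b) - 2 \leq k$.

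\emph{Step 2: Vanishing of $I_{s-k}(\Gamma)$.} Set $j = s - k = a+b-3-k$. The hypothesis rewrites as $j \leq \min(a,b) - 1$, so $j - a + 2 \leq 1$, $j - b + 2 \leq 1$, and $j - a - b + 2 \leq 1 - \max(a,b) \leq 0$. Hence each of the three ``correction'' binomial coefficients $\binom{j-a+2}{2}$, $\binom{j-b+2}{2}$, $\binom{j-a-b+2}{2}$ appearing in Proposition~\ref{prop: intersection of two plane curves} vanishes. That formula therefore collapses to $\dim(S_j(\Gamma)) = \binom{j+2}{2}$, which forces $\dim(I_j(\Gamma)) = 0$.

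Combining the two steps yields $g_{\Gamma'}(k) = \dim(I_{s-k}(\Gamma''))$, which is the claim. There is no real obstacle: the numerical hypothesis $k \geq \max(a,b) - 2$ is exactly calibrated so that the dual degree $s-k$ lies below $\min(a,b)$, the threshold at which the Noether formula first registers nontrivial forms vanishing on the complete intersection $\Gamma$. The only step that requires any care is the bookkeeping of indices in the three binomials.
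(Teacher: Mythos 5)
Your proof is correct and follows essentially the same route as the paper: swap the roles of $\Gamma'$ and $\Gamma''$ in Theorem~\ref{Thm: CB7}, evaluate at degree $s-k$, and then use Proposition~\ref{prop: intersection of two plane curves} to show $\dim(I_{s-k}(\Gamma))=0$ under the hypothesis $k\geq\max(a,b)-2$. Your index bookkeeping (reducing to $s-k\leq\min(a,b)-1$ so that all three correction binomials vanish) matches the paper's computation exactly.
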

\begin{proof}
Applying \eqref{Eq: CB5} with $k'=s-k$ and flipping the roles of $\Gamma'$ and $\Gamma''$, we see that
\[
\dim(I_{k'}(\Gamma''))-\dim(I_{k'}(\Gamma))
= g_{\Gamma'}(s-k'),
\]
that is,
\[g_{\Gamma'}(k)=\dim(I_{s-k}(\Gamma''))-\dim(I_{s-k}(\Gamma)).\]
Moreover, Proposition~\ref{prop: intersection of two plane curves} implies that
\begin{align*}
\dim(I_{s-k}(\Gamma))
&=\tbinom{s-k+2}{2} - \dim(S_{s-k}(\Gamma))\\
&=\tbinom{s-k-a+2}{2} +\tbinom{s-k-b+2}{2} -\tbinom{s-k-a-b+2}{2}\\
&=\tbinom{b-k-1}{2} +\tbinom{a-k-1}{2} -\tbinom{-k-1}{2}
=0.
\end{align*}
The result follows.
\end{proof}

\begin{corollary}\label{Cor: gxd increasing}
Let $\Gamma$ be as in Theorem \ref{Thm: CB7}. Let $\Gamma'$ and $\Gamma_1'$ be subschemes of $\Gamma$ such that $\Gamma_1' \subseteq \Gamma'$. Then for $k \geq \max(a,b)-2$, we have
\[
g_{\Gamma_1'}(k) 
\leq g_{\Gamma'}(k)
\leq \tbinom{s-k+2}{2}.
\]
\end{corollary}
\begin{proof}
By Corollary \ref{Cor: g gamma' OP2 gamma''}, we have $g_{\Gamma'}(k)=\dim(I_{s-k}(\Gamma''))$ and $g_{\Gamma_1'}(k)=\dim(I_{s-k}(\Gamma_1''))$, where $\Gamma''$, $\Gamma_1''$ are subschemes of $\Gamma$ residual to $\Gamma'$, $\Gamma_1'$ respectively.
Since $\Gamma_1' \subseteq \Gamma'$, we have $\Gamma'' \subseteq \Gamma_1''$, so
\[
\dim(I_{s-k}(\Gamma_1''))
\leq
\dim(I_{s-k}(\Gamma''))
\leq\tbinom{s-k+2}{2}. \qedhere
\]
\end{proof}

\subsection{Relationships between conjectured polynomials}

In this subsection, we consider the relationship between $H'_{r-(m-1)}(d,m)$ and polynomials $f_r(d,m;q)$ and $H_r(d,m;q)$. In particular, we will prove Proposition~\ref{Prop: simple expression frdm}. We will also show that $H'_{r-(m-1)}(d,m)\leq H_r(d,m;d+1)$, but the proof is deferred to Appendix~\ref{Appendix}.

Given $1\leq r\leq \binom{m+d}{d}$, Conjecture~\ref{Conj: complete GDC} picks the unique $1\leq l\leq m+1$ for which
\[
\tbinom{m+d}{d}-\tbinom{m+d+1-l}{d} 
<r 
\leq \tbinom{m+d}{d}-\tbinom{m+d-l}{d}.
\]
In \cite[Proposition 4]{singhal2025conjecturebeelendattaghorpade}, the authors show that this $l$ has the property that for any $W\subseteq S_d(m)$ with $\dim(W)=r$, we have $\dim(V(W))\leq m-l$.
We will show that another way of picking the same $l$ is by looking at $\omega_r(d,m)=(\beta_1,\dots,\beta_{m+1})$ and finding the index of the first nonzero term. In order to show this we recall the relation between $r$ and the tuple $\omega_r(d,m)$.

\begin{lemma}\label{r as function of alpha}\cite[Lemma 32]{singhal2025conjecturebeelendattaghorpade}
If $\omega_r(d,m)=(\beta_1, \dots, \beta_{m+1})$, then
\begin{align*}
    r&=1+\sum_{k=1}^m \tbinom{m-k+d-\sum_{j=1}^k \beta_j}{m-k+1}.
\end{align*}
\end{lemma}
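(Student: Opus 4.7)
The plan is a direct combinatorial count. Since $\beta := \omega_r(d,m)$ is the $r$th largest element of $\Omega(d,m)$ in lexicographic order, exactly $r$ tuples $\gu = (\gamma_1,\dots,\gamma_{m+1}) \in \Omega(d,m)$ satisfy $\gu \geq_{\mathrm{lex}} \beta$. So I would compute this cardinality explicitly and show it matches the right-hand side of the lemma.

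To count, I would partition the set $\{\gu \in \Omega(d,m) : \gu \geq_{\mathrm{lex}} \beta\}$ by the first index at which $\gu$ strictly exceeds $\beta$. For $1 \leq k \leq m+1$, let
\[A_k = \{\gu \in \Omega(d,m) : \gamma_1 = \beta_1,\ \dots,\ \gamma_{k-1} = \beta_{k-1},\ \gamma_k > \beta_k\},\]
so that $\{\gu \geq_{\mathrm{lex}} \beta\} = \{\beta\} \sqcup \bigsqcup_{k=1}^{m+1} A_k$ and therefore $r = 1 + \sum_{k=1}^{m+1} |A_k|$. I would first dispatch the term $k = m+1$: if $\gamma_j = \beta_j$ for all $j \leq m$ and $\gamma_{m+1} > \beta_{m+1}$, the constraint $\sum_j \gamma_j = d$ fails, so $|A_{m+1}| = 0$.

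For $1 \leq k \leq m$, I would write $s_k := \sum_{j=1}^k \beta_j$ and count $|A_k|$ by conditioning on $\gamma_k = t$ for $\beta_k + 1 \leq t \leq d - s_{k-1}$. For each such $t$, the remaining coordinates $(\gamma_{k+1},\dots,\gamma_{m+1})$ form an arbitrary tuple in $\N^{m+1-k}$ summing to $d - s_{k-1} - t$, which by stars-and-bars contributes $\binom{d - s_{k-1} - t + m - k}{m-k}$. Substituting $u = d - s_{k-1} - t$ yields
\[|A_k| \;=\; \sum_{u=0}^{d - s_k - 1}\binom{u + m - k}{m - k},\]
and the hockey-stick identity collapses this to $\binom{d - s_k + m - k}{m - k + 1}$. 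Summing from $k=1$ to $m$ reproduces the stated formula exactly.

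The proof is essentially bookkeeping; the main thing to watch is the change of summation index and the degenerate cases. In particular, when $s_k = d$ (that is, $\beta_{k+1} = \dots = \beta_{m+1} = 0$) the inner sum is empty and the binomial $\binom{m-k}{m-k+1}$ is zero, so both sides agree; this consistency check is the only place where a misstep is easy, and confirming it is the only real obstacle.
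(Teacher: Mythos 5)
Your proof is correct. The paper does not reprove this lemma---it simply cites \cite[Lemma 32]{singhal2025conjecturebeelendattaghorpade}---but your argument (counting the tuples $\geq_{\mathrm{lex}}\beta$ by the first index of strict excess, applying stars-and-bars and the hockey-stick identity, and checking the degenerate case $s_k=d$) is the standard derivation and all the steps, including the vanishing of the $k=m+1$ term, check out.
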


\begin{corollary}\label{Cor: index of d,000}
For $1\leq l\leq m+1$, take $r=\binom{m+d}{d}-\binom{m+d-l+1}{d}+1$, then we have $\omega_r(d,m)=(0,\dots,0,d,0,\dots,0)$, the $d$ is in the $l^{th}$ index.
\end{corollary}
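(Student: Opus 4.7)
The plan is a direct computation using Lemma~\ref{r as function of alpha}, combined with the hockey stick identity. Since the map $r\mapsto \omega_r(d,m)$ is a bijection between $\{1,\ldots,\binom{m+d}{d}\}$ and $\Omega(d,m)$, it suffices to plug in the tuple $(\beta_1,\dots,\beta_{m+1})=(0,\dots,0,d,0,\dots,0)$ (with $d$ in position $l$) into the formula of Lemma~\ref{r as function of alpha} and verify that the resulting index equals $\binom{m+d}{d}-\binom{m+d-l+1}{d}+1$.

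For this specific tuple the partial sums satisfy $\sum_{j=1}^k \beta_j = 0$ when $k<l$, and $\sum_{j=1}^k \beta_j = d$ when $k\geq l$. Consequently, in the expression $1+\sum_{k=1}^m \binom{m-k+d-\sum_{j=1}^k \beta_j}{m-k+1}$, the terms with $k\geq l$ are $\binom{m-k}{m-k+1}=0$ (upper index strictly less than lower), while for $k<l$ they are $\binom{m-k+d}{m-k+1}$. Reindexing by $i=m-k+1$, the surviving sum collapses to $\sum_{i=m-l+2}^{m}\binom{d+i-1}{i}$.

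The hockey stick identity $\sum_{i=0}^{n}\binom{d-1+i}{i}=\binom{d+n}{d}$ then yields
$$\sum_{i=m-l+2}^{m}\binom{d+i-1}{i}=\binom{d+m}{d}-\binom{d+m-l+1}{d},$$
so adding the leading $1$ gives $r=\binom{m+d}{d}-\binom{m+d-l+1}{d}+1$, exactly as required.

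There is no serious obstacle here: the whole argument reduces to a short binomial identity after a clean substitution, and the vanishing of terms beyond index $l$ makes the bookkeeping trivial. As a sanity check, $l=1$ gives $r=1$ (matching the lex-largest element $(d,0,\dots,0)$) and $l=m+1$ gives $r=\binom{m+d}{d}$ (matching the lex-smallest element $(0,\dots,0,d)$), consistent with the orientation of the lexicographic order used to define $\omega_r(d,m)$.
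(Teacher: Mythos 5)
Your proof is correct and is essentially the same as the paper's: both plug the tuple $(0,\dots,0,d,0,\dots,0)$ into Lemma~\ref{r as function of alpha}, observe that the terms with $k\geq l$ vanish, and telescope the remaining sum via the hockey stick identity to get $1+\binom{m+d}{d}-\binom{m+d-l+1}{d}$. No gaps.
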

\begin{proof}
Pick the $r'$ for which $\omega_{r'}(d,m)=(0,\dots,0,d,0,\dots,0)$ where $d$ is in the $l^{th}$ index. Then by Lemma~\ref{r as function of alpha} we have
\begin{align*}
r'&=1+\tbinom{m-1+d}{m}+\dots +\tbinom{m-(l-1)+d}{m-l+2}+ \tbinom{m-l+d-d}{m-l+1}+\dots +\tbinom{m-m+d-d}{m-m+1}\\
&=1+\tbinom{m-1+d}{d-1}+\dots +\tbinom{m-(l-1)+d}{d-1}\\
&=1+\tbinom{m+d}{d}-\tbinom{m-l+1+d}{d}=r. \qedhere
\end{align*}
\end{proof}

\begin{lemma}\label{Lem: first non-zero index}
Suppose $\omega_{s}(d,m)=(\alpha_1,\dots,\alpha_{m+1})$. Let $l$ be the smallest index for which $\alpha_l\neq 0$. Then we have
$$\tbinom{m+d}{d}-\tbinom{m+d+1-l}{d} <s \leq \tbinom{m+d}{d}-\tbinom{m+d-l}{d}.$$
\end{lemma}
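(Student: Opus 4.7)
The plan is to identify exactly which block of indices $s$ corresponds to tuples in $\Omega(d,m)$ whose first nonzero coordinate sits at position $l$; the block structure then forces the claimed inequality. The key observation is that under the lexicographic order (from largest to smallest), any tuple whose first nonzero entry is at some position $<l$ strictly exceeds every tuple whose first nonzero entry is at position $\geq l$, so the latter form a contiguous tail of $\Omega(d,m)$.

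First I would count these tails. A tuple in $\Omega(d,m)$ with $\alpha_1 = \cdots = \alpha_{l-1} = 0$ is determined by its remaining coordinates, which form an element of $\Omega(d, m+1-l)$, so there are $\binom{m+d+1-l}{d}$ such tuples. Similarly, tuples with $\alpha_1 = \cdots = \alpha_l = 0$ are in bijection with $\Omega(d, m-l)$, yielding $\binom{m+d-l}{d}$ of them.

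Next I would locate where each tail starts. By Corollary~\ref{Cor: index of d,000}, the tuple $(0,\dots,0,d,0,\dots,0)$ with $d$ in position $l$---the lexicographically largest tuple whose first nonzero entry is at position $\geq l$---has index $\binom{m+d}{d} - \binom{m+d-l+1}{d} + 1$. Combined with the tail's cardinality, this tail occupies the indices
$$\binom{m+d}{d} - \binom{m+d+1-l}{d} + 1,\; \dots,\; \binom{m+d}{d}.$$
Repeating the argument with $l$ replaced by $l+1$ shows that the sub-tail of tuples whose first nonzero entry is at position $\geq l+1$ occupies
$$\binom{m+d}{d} - \binom{m+d-l}{d} + 1,\; \dots,\; \binom{m+d}{d}.$$
Subtracting the sub-tail from the tail isolates exactly the tuples with first nonzero entry at position $l$, so they occupy
$$\binom{m+d}{d} - \binom{m+d+1-l}{d} + 1,\; \dots,\; \binom{m+d}{d} - \binom{m+d-l}{d},$$
which is precisely the claimed bound on $s$.

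I do not anticipate any real obstacle; essentially the statement is a repackaging of Corollary~\ref{Cor: index of d,000}. The only care needed is the boundary case $l = m+1$, where $\Omega(d, m-l) = \Omega(d, -1)$ is empty and $\binom{m+d-l}{d} = \binom{-1}{d}$ must be read as $0$. This is consistent with $w_s(d,m) = (0,\dots,0,d)$ being the unique such tuple, occurring at the final index $\binom{m+d}{d}$.
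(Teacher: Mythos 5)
Your proposal is correct and rests on the same two ingredients as the paper's proof: Corollary~\ref{Cor: index of d,000} to locate the boundary tuples $(0,\dots,0,d,0,\dots,0)$, and the observation that tuples whose first nonzero entry is at position $\geq l$ form a contiguous tail under the lexicographic order. The paper phrases this as a direct sandwich of $w_s(d,m)$ between the two boundary tuples rather than counting block sizes, but the argument is essentially identical, and your handling of the boundary case $l=m+1$ is consistent with the paper's convention.
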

\begin{proof}
Since $l$ is the smallest index for which $\alpha_l\neq 0$, we know that $\omega_{s}(d,m)$ is $\leq_{lex}$ to the tuple with $d$ on $l^{th}$ index and zero elsewhere. Also $\omega_{s}(d,m)$ is $>_{lex}$ to the tuple with $d$ on $(l+1)^{th}$ index and zero elsewhere. The result follows from Corollary~\ref{Cor: index of d,000}.
\end{proof}

\begin{corollary}\label{Cor: l,c}
Suppose $\omega_{s}(d,m)=(\alpha_1,\dots,\alpha_{m+1})$. Let $l$ be the smallest index for which $\alpha_l\neq 0$ and denote $\alpha_l=c$. Then we have
$$\tbinom{m+d}{d}-\tbinom{m+d+1-l}{d}+\tbinom{m+d-l-c}{d-c-1} <s \leq \tbinom{m+d}{d}-\tbinom{m+d+1-l}{d}+\tbinom{m+d-l-c+1}{d-c}.$$
\end{corollary}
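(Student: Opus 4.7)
The plan is to sandwich $w_s(d,m)$ between the lexicographically largest and smallest tuples in $\Omega(d,m)$ whose first nonzero entry equals $c$ and sits at position $l$, then read off the indices of those two extremal tuples. The largest such tuple is $T_{\max}=(0,\dots,0,c,d-c,0,\dots,0)$, with $c$ in position $l$ and $d-c$ in position $l+1$, and the smallest is $T_{\min}=(0,\dots,0,c,0,\dots,0,d-c)$, with $d-c$ in position $m+1$. Since $w_s(d,m)$ begins with $l-1$ zeros followed by $c$, we have $T_{\min}\leq_{\mathrm{lex}} w_s(d,m)\leq_{\mathrm{lex}} T_{\max}$; because larger $s$ corresponds to smaller lex position, this is exactly $\mathrm{index}(T_{\max})\leq s\leq \mathrm{index}(T_{\min})$.

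To compute $\mathrm{index}(T_{\max})$, I would apply Lemma~\ref{r as function of alpha} directly: all summands with $k>l$ vanish because $\sum_{j\leq k}\beta_j=d$ forces $\binom{m-k}{m-k+1}=0$. The hockey stick identity $\sum_{k=1}^{l-1}\binom{m-k+d}{d-1}=\binom{m+d}{d}-\binom{m+d-l+1}{d}$ together with the symmetry $\binom{m+d-l-c}{m-l+1}=\binom{m+d-l-c}{d-c-1}$ then collapses the formula to
\[1+\binom{m+d}{d}-\binom{m+d+1-l}{d}+\binom{m+d-l-c}{d-c-1},\]
which is the claimed strict lower bound on $s$.

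For $\mathrm{index}(T_{\min})$, rather than rerunning Lemma~\ref{r as function of alpha}, I would count the tuples strictly lex-greater than $T_{\min}$ directly. Those with first nonzero at position $<l$ number $\binom{m+d}{d}-\binom{m+d-l+1}{d}$ by Corollary~\ref{Cor: index of d,000}; those with first nonzero at position $l$ of value $\geq c+1$ number $\binom{m+d-l-c}{d-c-1}$ via the hockey stick sum $\sum_{c'=c+1}^{d}\binom{d-c'+m-l}{m-l}=\binom{m+d-l-c}{m-l+1}$; and the remaining $\binom{m+d-l-c}{d-c}-1$ tuples have first nonzero equal to $c$ at $l$ but differ from $T_{\min}$. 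Adding one for $T_{\min}$ itself and invoking Pascal's rule $\binom{m+d-l-c}{d-c-1}+\binom{m+d-l-c}{d-c}=\binom{m+d-l-c+1}{d-c}$ yields the desired upper bound.

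The main obstacle is purely bookkeeping: choosing the right version of the hockey stick identity, keeping track of the index shift that swaps $\binom{n}{k}$ with $\binom{n}{n-k}$, and verifying the Pascal collapse produces exactly the expression written in the statement. No new conceptual input beyond Lemma~\ref{r as function of alpha} and the counting argument already present in Lemma~\ref{Lem: first non-zero index} is required.
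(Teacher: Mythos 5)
Your proposal is correct and follows essentially the same route the paper intends: the paper's proof is simply ``follows similarly from Lemma~\ref{r as function of alpha},'' i.e.\ sandwich $w_s(d,m)$ between the extremal tuples $(0,\dots,0,c,d-c,0,\dots,0)$ and $(0,\dots,0,c,0,\dots,0,d-c)$ and compute their indices, which is exactly what you do. Your only deviation is computing the index of $T_{\min}$ by a direct lex-count rather than by reapplying Lemma~\ref{r as function of alpha} (which would give $1+\binom{m+d}{d}-\binom{m+d+1-l}{d}+\sum_{k=l}^{m}\binom{m-k+d-c}{d-c-1}$ and collapse to the same expression via hockey stick); both computations are routine and agree.
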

\begin{proof}
This follows similarly from Lemma~\ref{r as function of alpha}.
\end{proof}

Now, we can prove Proposition~\ref{Prop: simple expression frdm}, which gives a simpler expression for $f_r(d,m;q)$.

\begin{customprop}{\ref{Prop: simple expression frdm}}
Given $1\leq r\leq \binom{m+d}{d}$, suppose $\omega_r(d,m)=(\beta_1,\dots,\beta_{m+1})$. Let $l$ be the smallest index for which $\beta_l\neq 0$. Then we have
$$\tbinom{m+d}{d}-\tbinom{m+d+1-l}{d} <r \leq \tbinom{m+d}{d}-\tbinom{m+d-l}{d},$$
and
\[f_r(d,m;q)=H_r(d,m;q)+\pi_{m-l-1}(q).\]
\end{customprop}
\begin{proof}
By Lemma~\ref{Lem: first non-zero index}, we know that $\tbinom{m+d}{d}-\tbinom{m+d+1-l}{d} <r \leq \tbinom{m+d}{d}-\tbinom{m+d-l}{d}$.
Let $j=r-\binom{m+d}{d}+\binom{m+d+1-l}{d}$. By definition, we have
$$f_r(d,m;q)=H_j(d-1,m-l+1;q) +\pi_{m-l}(q).$$
Now consider the following injective map
\[\phi:\Omega(d-1,m-l+1)\to \Omega(d,m),\]
\[\phi(\gamma_1,\dots,\gamma_{m-l+2})=(0,\dots,0,1+\gamma_1,\gamma_2,\dots,\gamma_{m-l+2}).\]
There are $l-1$ zeros on the left, so the map actually lands in $\Omega(d,m)$. Corollary~\ref{Cor: index of d,000} implies that
\[\phi(\omega_{a}(d-1,m-l+1))=\omega_{\binom{m+d}{d}-\binom{m+d-l+1}{d}+a} (d,m).\]
In particular, this means $\phi(\omega_j(d-1,m-l+1))=\omega_{r}(d,m)$. Thus,
\[H_r(d,m;q)=H_{j}(d-1,m-l+1;q) +q^{m-l}.\]
We conclude that
\begin{align*}
f_r(d,m;q)
&=H_j(d-1,m-l+1;q) +\pi_{m-l}(q)\\
&=H_r(d,m;q) -q^{m-l}+\pi_{m-l}(q)\\
&=H_r(d,m;q) +\pi_{m-l-1}(q).\qedhere
\end{align*}
\end{proof}

Next, we want to show that $H'_{r-(m-1)}(d,m)\leq H_r(d,m;d+1)$. We start by understanding the relation between the tuples $\omega_r(d,m)$ and $\omega'_{r-(m-1)(d,m)}$.

\begin{lemma}\label{Lem: w and w'}
Consider some $m\leq r< \binom{m+d}{d}$. Denote $\omega'_{r-(m-1)}(d,m)=(\alpha_1,\dots,\alpha_{m+1})$.
Pick the $s$ for which $(\alpha_1,\dots,\alpha_{m+1})=\omega_{s}(d,m)$. Let $l$ be the smallest index for which $\alpha_l\neq 0$.
Then we have $s=r-(m-l)$. If $r=\binom{m+d}{d}$, then we have $s=r$.
\end{lemma}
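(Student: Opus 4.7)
The plan is to track the $m-1$ tuples that distinguish $\Omega(d,m)$ from $\Omega'(d,m)$ under lex order. Writing $\tau_j=(0,\dots,0,d,0,\dots,0)\in\Omega(d,m)$ with $d$ at position $j$, one sees that $\Omega(d,m)\setminus\Omega'(d,m)=\{\tau_j:2\leq j\leq m\}$ (any $\alpha\in\Omega(d,m)$ with $\alpha_j=d$ for some $j\in\{2,\dots,m\}$ must have all other coordinates zero, since the coordinates are nonnegative and sum to $d$). Because the lex order on $\Omega'(d,m)$ is the restriction of the lex order on $\Omega(d,m)$, for any $\alpha\in\Omega'(d,m)$ the position $s$ of $\alpha$ in $\Omega(d,m)$ equals its position $s'$ in $\Omega'(d,m)$ plus $N(\alpha):=|\{j\in\{2,\dots,m\}:\tau_j>_{\mathrm{lex}}\alpha\}|$. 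Specializing to $\alpha=w'_{r-(m-1)}(d,m)$, so that $s'=r-(m-1)$, the target identity $s=r-(m-l)$ reduces to proving $N(\alpha)=l-1$.

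The key step is this count, which I would carry out by comparing $\tau_j$ with $\alpha$ coordinate by coordinate according to where $j$ sits relative to the smallest nonzero index $l$ of $\alpha$. For $j<l$, the first differing coordinate is position $j$, where $(\tau_j)_j=d>0=\alpha_j$, giving $\tau_j>_{\mathrm{lex}}\alpha$. For $j=l$ (assuming $2\leq l\leq m$), the membership $\alpha\in\Omega'(d,m)$ forces $\alpha\neq\tau_l$, hence $\alpha_l<d$ and again $\tau_l>_{\mathrm{lex}}\alpha$. For $j>l$, position $l$ is the first differing coordinate and $\alpha_l\geq 1>0=(\tau_j)_l$, so $\tau_j<_{\mathrm{lex}}\alpha$. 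Consequently the $\tau_j$ exceeding $\alpha$ in lex order are exactly those with $j\in\{2,\dots,l\}$, of which there are $l-1$ (vacuously zero when $l=1$), yielding $s=r-(m-l)$ whenever $l\leq m$.

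Finally, I would dispatch the boundary case $l=m+1$ separately. The only $\alpha\in\Omega'(d,m)$ with $l=m+1$ is $(0,\dots,0,d)$, which is simultaneously the last element of $\Omega(d,m)$ and of $\Omega'(d,m)$, so $s=\binom{m+d}{d}$ and $s'=\binom{m+d}{d}-(m-1)$, forcing $r=s'+(m-1)=\binom{m+d}{d}$. Hence within the range $m\leq r<\binom{m+d}{d}$ this case cannot occur, so the argument of the previous paragraph applies; in the excluded case $r=\binom{m+d}{d}$ the same computation directly gives $s=r$, matching the lemma's parenthetical.

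The argument is essentially pure bookkeeping, so there is no substantive obstacle. The only mildly subtle point is that the general formula $s=r-(m-l)$ would fail numerically at $l=m+1$, and one must observe that this case coincides exactly with the boundary $r=\binom{m+d}{d}$ that the lemma treats separately.
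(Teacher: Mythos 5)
Your proof is correct and follows essentially the same route as the paper's: both express $s-(r-(m-1))$ as the number of deleted tuples $(0,\dots,0,d,0,\dots,0)$ that are lex-greater than $\alpha$, and show this count equals $l-1$ (or $m-1$ in the boundary case $r=\binom{m+d}{d}$). Your coordinate-by-coordinate justification of which $\tau_j$ exceed $\alpha$ just spells out a step the paper asserts without detail.
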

\begin{proof}
Note that
\[s=\#\{u\in \Omega(d,m): u\geq_{lex} (\alpha_1,\dots,\alpha_{m+1})\},\]
\[r-(m-1)=\#\{u\in \Omega'(d,m): u\geq_{lex} (\alpha_1,\dots,\alpha_{m+1})\}.\]
Therefore,
\[s-r+(m-1)=\#\{u\in \Omega(d,m)\setminus \Omega'(d,m): u\geq_{lex} (\alpha_1,\dots,\alpha_{m+1})\}.\]
The tuples in this set are precisely $(0,\dots,0,d,0,\dots,0)$ with $d$ in the $t^{th}$ index for $2\leq t\leq \min(l,m)$. Since $r\neq \binom{m+d}{d}$, we have $l\leq m$, and therefore the set has size $l-1$. This means that $s-r+(m-1)=l-1$, that is, $s=r-(m-l)$.

If $r=\binom{m+d}{d}$, then $l=m+1$. In this case $\min(l,m)=m$, so the set has size $m-1$. This means  $s-r+(m-1)=m-1$, that is, $s=r$.
\end{proof}

\begin{proposition}\label{Prop: H' smaller than H}
Given $m\leq r\leq \binom{m+d}{d}$, for every $q\geq d+1$, we have
\[H'_{r-(m-1)}(d,m)\leq H_r(d,m;q).\]
\end{proposition}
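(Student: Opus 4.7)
The plan is to invoke Lemma~\ref{Lem: w and w'} to translate the inequality into a statement on $\Omega(d,m)$, reduce to the case $q=d+1$ by a monotonicity observation, and then verify the remaining inequality by a direct analysis of the lex-advance from $w_s$ to $w_r$.

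Set $(\alpha_1,\ldots,\alpha_{m+1}) := w'_{r-(m-1)}(d,m)$ and let $l$ be its first nonzero index. By Lemma~\ref{Lem: w and w'} this tuple equals $w_s(d,m)$, where $s=r-(m-l)$ if $l\le m$ and $s=r$ if $l=m+1$. Consequently
\[H'_{r-(m-1)}(d,m)=\sum_{i=1}^m \alpha_i d^{m-i}=H_s(d,m;d),\]
so the proposition reduces to showing $H_s(d,m;d)\le H_r(d,m;q)$ whenever $q\ge d+1$. Since $H_r(d,m;q)=\sum_{i=1}^m \beta_i q^{m-i}$ has non-negative coefficients as a polynomial in $q$, it is non-decreasing in $q$, so it suffices to treat the case $q=d+1$. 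The sub-cases $l=m+1$ (both sides are $0$) and $l=m$ (where $s=r$, so both sides equal $\alpha_m$) are immediate.

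The substantive case is $l\le m-1$, so $r-s=m-l\ge 1$. I would trace the $m-l$ lex-advances from $w_s$ to $w_r$ explicitly via the standard next-composition recursion: either (Case~A) the $m$-th entry of $w_{t-1}$ is positive and one unit moves from position $m$ to position $m+1$, reducing $H_t(d,m;q)$ by exactly $1$; or (Case~B) the $m$-th entry is zero and the recursion backtracks to the largest $i<m$ with $a_i>0$, decrementing $a_i$ by $1$ and setting the $(i+1)$-th entry to $a_{m+1}+1$ (with all later entries zero), reducing $H_t(d,m;q)$ by $q^{m-i}-(a_{m+1}+1)q^{m-i-1}$. Summing these contributions over the $m-l$ steps yields an explicit upper bound on the loss $H_s(d,m;q)-H_r(d,m;q)$. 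On the other side, the gain
\[H_s(d,m;q)-H_s(d,m;d)=\sum_{i=1}^m \alpha_i\bigl(q^{m-i}-d^{m-i}\bigr)\]
has leading piece $\alpha_l\bigl((d+1)^{m-l}-d^{m-l}\bigr)$, which at $q=d+1$ already dominates the entire loss.

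The main obstacle will be the combinatorial bookkeeping over these $m-l$ advances, since the sequence of Case~A/Case~B transitions and the value of $a_{m+1}$ entering each Case~B contribution depend on the precise pattern of zeros in $\alpha$ and in the intermediate tuples. A clean route is to observe that across all $m-l$ advances the coordinate $\alpha_l$ decreases by at most one, so the successive Case~B losses telescope into a geometric-type sum bounded by $\sum_{i=l+1}^{m-1} q^{m-i}$, which is absorbed by $\alpha_l(q^{m-l}-d^{m-l})$ as soon as $q\ge d+1$.
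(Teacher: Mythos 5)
Your setup coincides with the paper's: translate via Lemma~\ref{Lem: w and w'} to get $H'_{r-(m-1)}(d,m)=H_s(d,m;d)$ with $r=s+(m-l)$, reduce to $q=d+1$ by monotonicity, dispose of $l\in\{m,m+1\}$, and then compare the ``loss'' $H_s(d,m;d+1)-H_r(d,m;d+1)$ against the ``gain'' $H_s(d,m;d+1)-H_s(d,m;d)$. The gap is in the only substantive step: your claimed bound on the total loss, namely $\sum_{i=l+1}^{m-1}q^{m-i}$, is false. Take $m=2$, $l=1$ and $w_s(d,2)=(\beta_1,0,d-\beta_1)$ with $1\le\beta_1<d$: here $r=s+1$, $w_{r}(d,2)=(\beta_1-1,\,d-\beta_1+1,\,0)$, and the loss at $q=d+1$ equals $\beta_1$, whereas your bound is the empty sum $0$. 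More generally, when $w_s=(0,\dots,0,\beta_l,0,\dots,0,\beta_{m+1})$ the entire advance of $m-l$ steps is a single application of Lemma~\ref{Lem: diff ws} with $k=l$, and the loss is $(\beta_l+1)(d+1)^{m-l-1}-1$, which grows linearly in $\beta_l$ and cannot be bounded by any expression independent of $\beta_l$. Your heuristic that ``$\alpha_l$ decreases by at most one'' is true but does not control the loss, because the step in which $\alpha_l$ drops costs about $q^{m-l}-(d-\sigma+1)q^{m-l-1}=\sigma q^{m-l-1}$ with $\sigma\ge\beta_l$.

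Consequently the final absorption is not a soft estimate: in the example above the gain is $\beta_1\bigl((d+1)-d\bigr)=\beta_1$, exactly equal to the loss, so the inequality $H'_{r-(m-1)}(d,m)\le H_r(d,m;d+1)$ can hold with equality and there is no slack to waste. The paper's proof handles this by proving the sharp bound
\[H_s(d,m;d+1)-H_{s+(m-l)}(d,m;d+1)\le(\beta_l+1)(d+1)^{m-l-1}-1\]
through a careful decomposition of the lex-advance into blocks (the sequence $s=s_0<\dots<s_t$ governed by Lemma~\ref{Lem: diff ws}), and then compares it with the gain via the elementary but non-obvious inequality $d^k\le 1+(d-1)(d+1)^{k-1}$ (Lemma~\ref{lem:3}), which yields $(\beta_l+1)(d+1)^{m-l-1}-1\le\beta_l\bigl((d+1)^{m-l}-d^{m-l}\bigr)$. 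You would need to replace your claimed geometric bound with this sharp loss estimate and supply the latter inequality; as written, the argument does not close.
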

\begin{proof}
See the end of Appendix~\ref{Appendix}.
\end{proof}

\subsection{Explicit polynomials when $m=2$}

In Conjecture~\ref{Conjecture urdm}, for $m=2$, $r$ is in the range $2\leq r\leq \binom{d+2}{2}$. This range can be subdivided as 
\[\left[2,\tbinom{d+2}{2}\right]
=\bigcup_{t=3}^{d}\left(1+\tbinom{d-t+1}{2}, 1+\tbinom{d-t+2}{2}\right] 
\bigcup (1+\tbinom{d-1}{2},\tbinom{d+2}{2}].\]

\begin{lemma}\label{Lem: formula for H' m=2}
We have
\begin{equation*}
H'_{r-1}(d,2)=\begin{cases}
    td+\binom{d-t+2}{2}-r+1 &\text{ if } \binom{d-t+1}{2}<r-1\leq \binom{d-t+2}{2}, 1\leq t\leq d\\
    \binom{d+2}{2}-r &\text{ if } \binom{d-1}{2}<r-1\leq \binom{d+2}{2}-1.
\end{cases}
\end{equation*}    
\end{lemma}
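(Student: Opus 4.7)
The plan is a direct enumeration of $\Omega'(d,2)$ in lex-decreasing order, followed by a case check. The key structural observation is that for each $1 \leq t \leq d$, the tuples in $\Omega(d,2)$ with first coordinate exactly $t$ are $(t,d-t,0),(t,d-t-1,1),\ldots,(t,0,d-t)$ in lex-decreasing order, a block of size $d-t+1$, and none of them equals the dropped tuple $(0,d,0)$. Summing the sizes of the blocks with first coordinate strictly greater than $t$ gives $\sum_{s=t+1}^{d}(d-s+1)=\binom{d-t+1}{2}$, so in $\Omega'(d,2)$ the block with first coordinate $t$ occupies positions $\binom{d-t+1}{2}+1,\ldots,\binom{d-t+2}{2}$. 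The remaining elements, those with first coordinate $0$ other than $(0,d,0)$, are $(0,d-j,j)$ for $j=1,\ldots,d$; because $(0,d,0)$ originally sat at position $\binom{d+1}{2}+1$ of $\Omega(d,2)$, these now occupy positions $\binom{d+1}{2}+j$ of $\Omega'(d,2)$.

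Given this enumeration, the two cases of the lemma are direct substitutions. For $r-1\in(\binom{d-t+1}{2},\binom{d-t+2}{2}]$ with $1\leq t\leq d$, I read off $\omega'_{r-1}(d,2)=(t,d-t-j,j)$ with $j=r-2-\binom{d-t+1}{2}$, whence $H'_{r-1}(d,2)=td+(d-t-j)$; combining this with the identity $\binom{d-t+2}{2}-\binom{d-t+1}{2}=d-t+1$ rewrites the expression in one line as $td+\binom{d-t+2}{2}-r+1$, matching the first case. For $r-1\in\{\binom{d+1}{2}+1,\ldots,\binom{d+2}{2}-1\}$, the corresponding tuple is $(0,d-j,j)$ with $j=r-1-\binom{d+1}{2}$, and $H'_{r-1}(d,2)=d-j=\binom{d+2}{2}-r$, as desired.

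The only point requiring care is that the second case of the lemma, $\binom{d-1}{2}<r-1\leq\binom{d+2}{2}-1$, overlaps the first case for $t\in\{1,2\}$; so I would additionally verify compatibility on the overlap by checking the identities $td+\binom{d-t+2}{2}+1=\binom{d+2}{2}$ for $t\in\{1,2\}$, each a one-line expansion of binomial coefficients. I do not expect any genuine obstacle: the whole argument is careful bookkeeping of positions in the lex ordering of $\Omega'(d,2)$, with the single subtlety being the one-position shift caused by excising $(0,d,0)$, which affects only the final block with $\alpha_1=0$.
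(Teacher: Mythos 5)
Your proposal is correct: the block-counting is right (the tuples with first coordinate $t\geq 1$ are unaffected by excising $(0,d,0)$, since that tuple is lex-smaller than all of them, and the one-position shift hits only the $\alpha_1=0$ tail), the offset $j=r-2-\binom{d-t+1}{2}$ stays in $[0,d-t]$ on the stated range, and the overlap check for $t\in\{1,2\}$ via $td+\binom{d-t+2}{2}+1=\binom{d+2}{2}$ is exactly what is needed to reconcile the two cases. Your route differs from the paper's: the paper does not enumerate $\Omega'(d,2)$ directly, but instead first reduces $H'_{r-1}(d,2)$ to $H_{r-1}(d,2;d)$ or $H_r(d,2;d)$ via Lemma~\ref{Lem: w and w'}, then guesses the tuple $(t,\binom{d-t+2}{2}-r+1,\cdot)$ and verifies its position using the inverse index formula of Lemma~\ref{r as function of alpha}. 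Your argument is more self-contained and elementary --- it needs only the definition of the lex order on $\Omega'(d,2)$ and no imported index formula --- at the cost of redoing by hand the bookkeeping that Lemmas~\ref{r as function of alpha} and~\ref{Lem: w and w'} package once and reuse elsewhere in the paper; the paper's version has the advantage of making the relation between $\omega'_{r-1}(d,2)$ and $\omega_s(d,2)$ explicit, which is what later arguments (e.g.\ Proposition~\ref{Prop: H' smaller than H}) actually consume.
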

Note that if $t\in \{1,2\}$ and $ \binom{d-t+1}{2}<r-1\leq \binom{d-t+2}{2}$, then both formulas are valid
\[
H'_{r-1}(d,2) 
=td+\tbinom{d-t+2}{2}-r+1
=\tbinom{d+2}{2}-r.
\]

\begin{proof}
By Lemma~\ref{Lem: w and w'}, we see that for $1\leq r-1\leq \binom{d+2}{2}-1$, we have
\begin{equation*}
H'_{r-1}(d,2)=\begin{cases}
    H_{r-1}(d,2;d) &\text{ if } 1\leq r-1\leq \binom{d+1}{2}\\
    H_{r}(d,2;d) &\text{ if } \binom{d+1}{2}<r-1\leq \binom{d+2}{2}-1.
\end{cases}
\end{equation*}
Now consider some $0\leq t\leq d$ and $\binom{d-t+1}{2}<r-1\leq \binom{d-t+2}{2}$. Let $a_1=t$ and $a_2=\binom{d-t+2}{2}-r+1$, so $0\leq a_1,a_2$ and
\[a_1+a_2=t+\tbinom{d-t+2}{2}-(r-1)\leq t+\tbinom{d-t+2}{2}- (\tbinom{d-t+1}{2}+1)=d.\]
Thus $a_3=d-a_1-a_2\geq 0$. This means that $(a_1,a_2,a_3)\in\Omega(d,2)$ and $(a_1,a_2,a_3)=\omega_s(d,2)$ for some $s$. By Lemma~\ref{r as function of alpha}, we have
\[s=1+\tbinom{1+d-a_1}{2}+\tbinom{d-a_1-a_2}{1}
=1+\tbinom{1+d-t}{2}+(d-t-\tbinom{d-t+2}{2}+r-1) =r-1.\]

Now if $t\geq 1$, then $r-1\leq \binom{d-t+2}{2}\leq \binom{d+1}{2}$, so
\[H'_{r-1}(d,2)=H_{r-1}(d,2;d)= a_1d+a_2= td+\tbinom{d-t+2}{2}-r+1.\]
Note that if $t=1$, then this simplifies as
\[H'_{r-1}(d,2)= d+\tbinom{d+1}{2}-r+1 =\tbinom{d+2}{2}-r.\]
Similarly, if $t=2$, then this simplifies as
\[H'_{r-1}(d,2)= 2d+\tbinom{d}{2}-r+1 =\tbinom{d+2}{2}-r.\]

Finally, if $t=0$, then $r-1>\binom{d-t+1}{2}=\binom{d+1}{2}$, so $H'_{r-1}(d,2)=H_{r}(d,2;d)$.
Moreover,
\[\omega_{r-1}(d,2)=(t,\tbinom{d-t+2}{2}-r+1,d-t-\tbinom{d-t+2}{2}+r-1)=(0,\tbinom{d+2}{2}-r+1,d-\tbinom{d+2}{2}+r-1),\]
so
\[\omega_{r}(d,2)=(0,\tbinom{d+2}{2}-r,d-\tbinom{d+2}{2}+r).\]
We conclude that
\[H'_{r-1}(d,2)=H_{r}(d,2;d)= \tbinom{d+2}{2}-r. \qedhere\]
\end{proof}

\section{Lower bound}\label{Sec: lower bound}

In this section, we will prove Proposition~\ref{Prop: lower bound urdm}, which shows that our conjectured formula for $u_r(d,m)$ is at least a lower bound. We will do this by explicitly constructing $r$ linearly independent polynomials whose vanishing set has size at least $H'_{r-(m-1)}(d,m)$. 
In this section, $\kappa$ continues to be an algebraically closed field.

We give an example of our construction before getting to the general proof.
\begin{example}\label{Eg: lower bound}
Suppose $m=3$ and $\omega'_{r-2}(d,m)=(\alpha_1,\alpha_2,\alpha_3,\alpha_4)$.
Pick distinct $a_1,\dots,a_d\in\kappa$, this is possible since $\kappa$ is an algebraically closed field.
Then we take
\begin{align*}
Y_1&=\{a_1,\dots,a_{\alpha_1}\}\times \{a_1,\dots,a_{d}\} \times \{a_1,\dots,a_{d}\},\\
Y_2&= \{a_d\}\times\{a_1,\dots,a_{\alpha_2}\}\times \{a_1,\dots,a_{d}\},\\
Y_3&= \{a_d\}\times \{a_d\}\times \{a_1,\dots,a_{\alpha_3}\},
\end{align*}
and $Y=Y_1\cup Y_2\cup Y_3$. It is clear that $|Y|=H'_{r-2}(d,m)$. We will show that there are $r$ linearly independent polynomials of degree $d$ that vanish on $Y$ and their vanishing set has dimension $0$. This will establish the lower bound.

Moreover, this construction motivates the definition of $\Omega'(d,3)$, which leaves out the tuples $(0,d,0,0)$ and $(0,0,d,0)$. Note that the tuple $(0,d,0,0)$ would lead to the same $Y$ (up to relabeling) as the tuple $(1,0,0,d-1)$. Similarly, the tuple $(0,0,d,0)$ would lead to the same $Y$ as the tuple $(0,1,0,d-1)$.
\end{example}

We now prove Proposition~\ref{Prop: lower bound urdm}. If $R$ is a ring of polynomials in some variables, then $R_d$ will be the subspace of homogeneous degree $d$ polynomials.

\begin{customprop}{\ref{Prop: lower bound urdm}}
For $m\leq r\leq \binom{m+d}{d}$, we have
\[u_r(d,m)\geq H_{r-(m-1)}'(d,m).\]
\end{customprop}
\begin{proof}
First, consider the case $r=\binom{m+d}{d}$. Then $H_{r-(m-1)}'(d,m)=0$, so the result is trivially true. Now suppose $m\leq r< \binom{m+d}{d}$.
We use a similar construction as the one by Heijnen and Pellikaan in \cite[IV]{heijnen1998generalized}.
Pick distinct $a_1,\dots,a_d\in\kappa$, with $a_d=0$, this is possible since $\kappa$ is an algebraically closed field. Suppose $\omega'_{r-(m-1)}(d,m)=(\alpha_1,\dots,\alpha_{m+1})$. Let $l$ be the smallest index for which $\alpha_l\neq 0$. Let $s=r-(m-l)$. So Lemma~\ref{Lem: w and w'} tells us that $(\alpha_1,\dots,\alpha_{m+1})=\omega_s(d,m)$. For $1\leq i\leq m$ denote
\begin{align*}
F_i&=\prod_{j=1}^{d}(x_i-a_jx_0),
&
\text{and}
&
&g_i=\prod_{j=1}^{\alpha_i}(x_i-a_jx_0).
\end{align*}
If $\alpha_i=0$, then $g_i=1$.

Next, consider the case $r=m$, so $(\alpha_1,\dots,\alpha_{m+1})=(d,0,\dots,0)$ and $l=1$. Then $F_1,\dots,F_m$ are linearly independent, $\dim(V(F_1,\dots,F_m))=0$ and
$$|V(F_1,\dots,F_m)|=d^m=H'_1(d,m).$$
This shows that $u_{m}(d,m)\geq H_{m-(m-1)}'(d,m)$.

For the rest of the proof, suppose $m<r<\binom{m+d}{d}$. This means $\alpha_1\neq d$ and hence $d\notin \{\alpha_1,\dots,\alpha_{m}\}$. Thus, $x_i|F_i$ and $x_i\nmid g_i$.
For $l\leq i\leq m$, denote
\begin{align*}
Y_i&=V(x_1,\dots,x_{i-1}, g_i,F_{i+1},\dots, F_{m}),
&
\text{and}
&
&Y=\bigcup_{i=l}^{m}Y_i.
\end{align*}
Notice that $|Y_i|=\alpha_i d^{m-i}$ and $|Y|=\sum_{i=l}^{m}\alpha_i d^{m-i}=H'_{r-(m-1)}(d,m)$.
Next, for $1\leq i\leq m-1$, denote
\[W_i=g_l g_{l+1}\dots g_{i}x_i \kappa[x_0,x_i,\dots,x_{m}]_{d-1-\alpha_l-\dots-\alpha_{i}}.\]
Notice that if $i<l$, then this simplifies to $W_i=x_i \kappa[x_0,x_i,\dots,x_{m}]_{d-1}$.
Also denote
\[W_m=g_l\dots g_{m} \kappa[x_0,x_{m}]_{d-\alpha_l-\dots-\alpha_{m}}.\]
Let $W=W_1+\dots+W_m+\langle F_{l+1},\dots,F_{m}\rangle$. It is clear that $Y\subseteq V(W)$, so
\[|V(W)|\geq |Y|=H'_{r-(m-1)}(d,m).\]
Moreover, $V(W)\subseteq V(F_1,\dots,F_m)$, so $\dim(V(W))=0$.
We want to compute $\dim(W)$. For this, we want to show that the sum used to define $W$ is a direct sum.

Suppose for $1\leq i\leq m-1$, we have $h_i\in \kappa[x_0,x_i,\dots,x_{m}]_{d-1-\alpha_1-\dots-\alpha_i}$ and $h_m\in \kappa[x_0,x_m]_{d-\alpha_1-\dots-\alpha_m}$ and $u_{l+1},\dots,u_m\in \kappa$ such that
\[\sum_{i=1}^{l}\Big( x_i\prod_{j=1}^{i}g_j\Big)h_i+
\sum_{i=l+1}^{m-1}\Big(\Big( x_i\prod_{j=1}^{i}g_j\Big)h_i +u_{i}F_i\Big) +\Big(\prod_{j=1}^{m}g_j\Big)h_m +u_{m}F_m=0.\]
We will show that all $h_i$ and $u_i$ are zero, which will show that it is a direct sum.
Assume for the sake of contradiction that some $h_i$ or $u_i$ is non-zero. Consider the smallest such index, denote it as $i_0$.
\begin{enumerate}
    \item Case 1: $1\leq i_0\leq l$. We know that
    \[\sum_{i=i_0}^{l}\Big( x_i\prod_{j=1}^{i}g_j\Big)h_i+
\sum_{i=l+1}^{m-1}\Big(\Big( x_i\prod_{j=1}^{i}g_j\Big)h_i +u_{i}F_i\Big) +\Big(\prod_{j=1}^{m}g_j\Big)h_m +u_{m}F_m=0.\]
Note that for $i>i_0$, the terms do not involve $x_{i_0}$. Therefore if we plug in $x_{i_0}=0$ and subtract the resulting equation from the equation above, it implies that $( x_{i_0}\prod_{j=1}^{i_0}g_j)h_{i_0} =0$. This means $h_{i_0}=0$ which is a contradiction.

\item Case $2$: $l+1\leq i_0\leq m-1$. We know that
    \[
\sum_{i=i_0}^{m-1}\Big(\Big( x_i\prod_{j=1}^{i}g_j\Big)h_i +u_{i}F_i\Big) +\Big(\prod_{j=1}^{m}g_j\Big)h_m +u_{m}F_m=0.\]
Again note that for $i>i_0$, the terms do not involve $x_{i_0}$. Therefore if we plug in $x_{i_0}=0$ and subtract the resulting equation from the equation above, it implies that $( x_{i_0}\prod_{j=1}^{i_0}g_j)h_{i_0} +u_{i_0}F_{i_0} =0$.
Since at least one of $h_{i_0}$ or $u_{i_0}$ is nonzero, it follows that they both must be nonzero.
Now $\alpha_l>0$ and $i_0>l$ imply that
\begin{align*}
&(x_l-a_1 x_0)\mid g_l,
&
&g_l\mid \prod_{j=1}^{i_0} g_j,
&
&\prod_{j=1}^{i_0} g_j \mid F_{i_0}.
\end{align*}
This means $(x_l-a_1 x_0)\mid F_{i_0}$. Since $i_0>l$, this contradicts the definition of $F_{i_0}$.

\item Case 3: $i_0>l$ and $i_0=m$. We know that
    \[ \Big(\prod_{j=1}^{m}g_j\Big)h_m +u_{m}F_m=0.\]
    Since at least one of $h_{m}$ and $u_{m}$ is nonzero, it follows that they both must be nonzero. Arguing as in Case 2, we see that $(x_l-a_1 x_0)\mid F_{m}$. Since $m>l$, this contradicts the definition of $F_{m}$.
\end{enumerate}
It follows that
\[W=W_1\oplus\dots\oplus W_m\oplus\langle F_{l+1},\dots,F_{m}\rangle.\]
Note that for $1\leq i\leq m-1$, $\dim(W_i)=\binom{m-(i-1)+d-1-\sum_{j=1}^{i}\alpha_j}{m-(i-1)}$ and $\dim(W_m)=\binom{1+d-\sum_{j=1}^{m}\alpha_j}{1}$.
Therefore, by Lemma~\ref{r as function of alpha}, we have
\begin{align*}
\dim(W)
&=(m-l)+\sum_{i=1}^{m-1} \tbinom{m-(i-1)+d-1-\sum_{j=1}^{i}\alpha_j}{m-(i-1)} +\tbinom{1+d-\sum_{j=1}^{m}\alpha_j}{1}\\
&=1+(m-l)+\sum_{i=1}^{m} \tbinom{m-i+d-\sum_{j=1}^{i}\alpha_j}{m-(i-1)}
=(m-l)+s
=r.
\end{align*}
We have shown that $\dim(W)=r$, $\dim(V(W))=0$ and $|V(W)|\geq H'_{r-(m-1)}(d,m)$. It follows that $u_r(d,m)\geq H'_{r-(m-1)}(d,m)$.
\end{proof}

\section{Special Cases of Conjecture~\ref{Conjecture urdm}}\label{Sec: special cases}

In this section, we will prove all the parts of Theorem~\ref{thm: cases of urdm} except $m=2$.
We start by proving a lemma that when $X$ is a small set of points, then each point imposes a new condition on polynomials of degree $d$, and hence $g_X(d)=0$.

\begin{lemma}
Suppose $X\subseteq \PPP^m$ is a finite subset with $|X|\leq d$ and $P\in \Pm\setminus X$. Then
\[
\dim(I_d(X\cup \{P\}))
=\dim(I_d(X))-1.
\]
\end{lemma}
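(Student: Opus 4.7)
The plan is to exhibit a degree $d$ form that vanishes on $X$ but not at $P$, which gives the nontrivial inequality. Note that $I_d(X \cup \{P\})$ is precisely the kernel of the linear evaluation map $\mathrm{ev}_P : I_d(X) \to \kappa$, so we automatically have
\[\dim(I_d(X \cup \{P\})) \geq \dim(I_d(X)) - 1,\]
with equality iff $\mathrm{ev}_P$ is surjective, i.e.\ iff some $f \in I_d(X)$ satisfies $f(P) \neq 0$.

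For the construction, write $X = \{Q_1, \dots, Q_n\}$ with $n \leq d$. For each $i$, since $Q_i \neq P$ in $\Pm$, the space of linear forms vanishing at $Q_i$ is a hyperplane in $S_1(m,\kappa)$ which is not contained in the hyperplane of linear forms vanishing at $P$; hence I can choose a linear form $\ell_i$ with $\ell_i(Q_i) = 0$ and $\ell_i(P) \neq 0$. I also pick any linear form $\ell_0$ with $\ell_0(P) \neq 0$ (for instance, one of the coordinate forms $x_j$ with $P_j \neq 0$). Then
\[f \;=\; \ell_0^{\,d-n} \prod_{i=1}^n \ell_i\]
is homogeneous of degree $d$, vanishes on all of $X$, and satisfies $f(P) \neq 0$ since each factor is nonzero at $P$.

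The only potential obstacle is the selection of the $\ell_i$, which is immediate from the fact that $P \neq Q_i$ in projective space; the hypothesis $|X| \leq d$ is used exactly to ensure that the product $\prod \ell_i$ has degree at most $d$, so we can pad with powers of $\ell_0$ to land in degree $d$ without disturbing non-vanishing at $P$. Combining the two inequalities gives $\dim(I_d(X \cup \{P\})) = \dim(I_d(X)) - 1$.
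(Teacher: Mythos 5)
Your proof is correct and follows essentially the same route as the paper: both realize $I_d(X\cup\{P\})$ as the kernel of the evaluation map at $P$ on $I_d(X)$, and both exhibit a surjection witness by multiplying linear forms $\ell_i$ vanishing at $Q_i$ but not at $P$, padded by a degree $d-|X|$ factor nonvanishing at $P$. No gaps; your write-up is just slightly more explicit about where the hypothesis $|X|\leq d$ enters.
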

\begin{proof}
Consider the map from $I_d(X)$ to $\kappa$ given by evaluating at $P$. Since $I_d(X\cup \{P\})$ is the kernel of this map, we see that its dimension is either $\dim(I_d(X))$ or $\dim(I_d(X))-1$. In order to show that it is $\dim(I_d(X))-1$, we need to construct a polynomial in $I_d(X)$ that does not vanish at $P$. For each point in $X$, we can find a degree one polynomial that vanishes at that point but not on $P$. By multiplying them, we get a polynomial of degree $|X|$ that vanishes on $X$ but not on $P$. Since $|X|\leq d$, we can multiply this by a polynomial of degree $d-|X|$ that does not vanish on $P$, and obtain a polynomial in $I_d(X)$ that does not vanish at $P$. The result follows.
\end{proof}

\begin{corollary}\label{Cor: gx is 0}
Suppose $X\subseteq \PPP^m$ is a finite subset with $|X|\leq d+1$. Then $g_X(d)=0$.    
\end{corollary}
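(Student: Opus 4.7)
The plan is to prove the corollary by a short induction on $n=|X|$, where the preceding lemma does essentially all the work. Unpacking the definition, $g_X(d)=0$ is equivalent to
\[\dim(I_d(X)) = \tbinom{m+d}{d}-|X|_{\mm};\]
since the notation $|X|$ (without the multiplicity subscript) indicates that $X$ is reduced, $|X|_{\mm}=|X|=n$ and the goal becomes $\dim(I_d(X)) = \binom{m+d}{d}-n$.

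For the base case $n=0$, the claim is immediate: $I_d(\emptyset)=S_d(m,\kappa)$ has dimension $\binom{m+d}{d}$, so $g_X(d)=0$. For the inductive step, assume the statement holds for every reduced zero-dimensional set of size $n$ with $n\leq d$, and let $X'$ satisfy $|X'|=n+1\leq d+1$. Choose any point $P\in X'$ and set $X=X'\setminus\{P\}$, so that $|X|=n\leq d$ and $P\notin X$. The preceding lemma then applies and yields $\dim(I_d(X'))=\dim(I_d(X))-1$. Combining this with the inductive hypothesis $\dim(I_d(X))=\binom{m+d}{d}-n$ gives $\dim(I_d(X'))=\binom{m+d}{d}-(n+1)$, which is precisely $g_{X'}(d)=0$.

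The only thing to monitor is that the lemma's hypothesis $|X|\leq d$ is satisfied at every step of the induction, and this is forced by the corollary's assumption $|X'|\leq d+1$. No real obstacle arises: once the induction is set up, the corollary is a one-line consequence of the lemma.
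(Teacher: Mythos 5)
Your proof is correct and follows essentially the same route as the paper: the paper's one-line proof ("the previous lemma implies that $\dim(I_d(X))=\binom{m+d}{d}-|X|$") is exactly the induction on $|X|$ that you have written out explicitly, adding one point at a time via the preceding lemma. No gaps; you have simply made the implicit induction explicit.
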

\begin{proof}
The previous lemma implies that $\dim(I_d(X))=\binom{m+d}{d}-|X|$. It follows that $g_X(d)=0$.
\end{proof}

We can now prove the part of Theorem~\ref{thm: cases of urdm} where $r$ satisfies $\binom{m+d}{d}-d\leq r\leq \binom{m+d}{d}$.

\begin{proposition}
Given $m,d\geq 1$ and $\binom{m+d}{d}-d\leq r\leq \binom{m+d}{d}$, we have
\[u_r(d,m)=H_{r-(m-1)}'(d,m).\]    
\end{proposition}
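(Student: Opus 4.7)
The plan is to combine the lower bound already supplied by Proposition~\ref{Prop: lower bound urdm} with a matching upper bound derived from Corollary~\ref{Cor: gx is 0}. The first task is to simplify the right-hand side in this range of $r$. I would inspect the smallest elements of $\Omega'(d,m)$ under lexicographic ordering: the $d$ smallest tuples are $(0,\dots,0,k,d-k)$ for $k=0,1,\dots,d-1$, and the next one (which exists for $m\geq 2$, since $(0,\dots,0,d,0)$ is the unique tuple removed from $\Omega(d,m)$ appearing here) is $(0,\dots,0,1,0,d-1)$. Matching these with the indices $r-(m-1)$ for $r$ in the specified range and computing $H'$, one finds
\[
H'_{r-(m-1)}(d,m)=\tbinom{m+d}{d}-r\qquad\text{for }\tbinom{m+d}{d}-d\leq r\leq \tbinom{m+d}{d},
\]
which is the target value. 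The case $m=1$ should be checked separately but is immediate since $\Omega'(d,1)=\Omega(d,1)$.

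For the upper bound, suppose $W\subseteq S_d(m,\kappa)$ with $\dim(W)=r$ and $\dim(V(W))=0$, and set $X=V(W)$. Since $W\subseteq I_d(X)$, we have $\dim(I_d(X))\geq r\geq \binom{m+d}{d}-d$. I would first show that $|X|\leq d$ by contradiction: if $|X|\geq d+1$, pick any subset $X'\subseteq X$ with $|X'|=d+1$; Corollary~\ref{Cor: gx is 0} yields $g_{X'}(d)=0$ and hence $\dim(I_d(X'))=\binom{m+d}{d}-(d+1)$. The inclusion $I_d(X)\subseteq I_d(X')$ then forces $\dim(I_d(X))\leq \binom{m+d}{d}-(d+1)<\binom{m+d}{d}-d\leq r$, a contradiction. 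Knowing now that $|X|\leq d\leq d+1$, I apply Corollary~\ref{Cor: gx is 0} directly to $X$ to obtain $\dim(I_d(X))=\binom{m+d}{d}-|X|$, and then $r\leq \dim(I_d(X))$ gives $|X|\leq \binom{m+d}{d}-r=H'_{r-(m-1)}(d,m)$, as required.

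The main (modest) obstacle is the bookkeeping for the $H'$ simplification: one needs to carefully verify that the removal of the tuples $(0,\dots,0,d,0,\dots,0)$ in the definition of $\Omega'(d,m)$ does not disrupt the identification of $w'_{r-(m-1)}(d,m)$ at the boundary value $r=\binom{m+d}{d}-d$, where the relevant tuple is $(0,\dots,0,1,0,d-1)$ rather than the removed $(0,\dots,0,d,0)$. The rest of the argument is purely Hilbert-function bookkeeping, using the two appeals to Corollary~\ref{Cor: gx is 0} described above.
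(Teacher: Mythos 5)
Your proposal is correct and follows essentially the same route as the paper: the lower bound from Proposition~\ref{Prop: lower bound urdm}, the identification $H'_{r-(m-1)}(d,m)=\binom{m+d}{d}-r$, and an upper bound obtained by applying Corollary~\ref{Cor: gx is 0} to a subset of $V(W)$ of size at most $d+1$. The paper merely compresses your two applications of the corollary into a single contradiction (taking $X\subseteq V(W)$ with $|X|=\binom{m+d}{d}-r+1\leq d+1$ directly), but the mechanism is identical.
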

\begin{proof}
For $r$ in this range, $H'_{r-(m-1)}(d,m)=H_r(d,m;d)=\binom{m+d}{d}-r$.
From Proposition~\ref{Prop: lower bound urdm}, we know that $u_r(d,m)\geq H_{r-(m-1)}'(d,m)$.
Consider $W\subseteq S_d(m,\kappa)$ with $\dim(W)=r$ and $\dim(V(W))=0$. 

Assume for the sake of contradiction that $|V(W)|>\binom{m+d}{d}-r$. Pick a subset $X\subseteq V(W)$ of size $|X|=\binom{m+d}{d}-r+1$. Notice that
\[|X|\leq \tbinom{m+d}{d} -(\tbinom{m+d}{d}-d)+1=d+1.\]
Thus Corollary~\ref{Cor: gx is 0} says that $g_X(d)=0$, that is, $\dim(I_d(X))=\binom{m+d}{d}-|X|$. Therefore,
\[r= \dim(W)\leq \dim(I_d(X))= \tbinom{m+d}{d}-|X|=r-1.\]
This is a contradiction, and hence $|V(W)|\leq\binom{m+d}{d}-r$.

We conclude that $u_r(d,m)\leq H_{r-(m-1)}'(d,m)$ which completes the proof.
\end{proof}

If $d=1$ or $m=1$, then every $r$ in the range $m\leq r\leq \binom{m+d}{d}$, satisfies $\binom{m+d}{d}-d\leq r\leq \binom{m+d}{d}$.
Therefore, the cases $d=1$ and $m=1$ of Theorem~\ref{thm: cases of urdm} follow as corollaries.

\begin{corollary}[d=1]
Given $m\geq 1$ and $m\leq r\leq \binom{m+1}{1}$, we have
\[u_r(1,m)=H_{r-(m-1)}'(1,m).\]
\end{corollary}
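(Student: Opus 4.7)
The plan is to observe that this corollary is an immediate specialization of the proposition proved just above it. Specifically, when $d=1$, we have $\binom{m+d}{d} = \binom{m+1}{1} = m+1$ and $\binom{m+d}{d}-d = m$. Hence the hypothesis $m \leq r \leq \binom{m+1}{1}$ in the corollary coincides exactly with the hypothesis $\binom{m+d}{d}-d \leq r \leq \binom{m+d}{d}$ of the preceding proposition with $d=1$.

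Concretely, I would first verify this numerical equality between the two hypotheses on $r$. Then, applying the proposition with $d=1$ yields
\[
u_r(1,m) = H'_{r-(m-1)}(1,m)
\]
directly, with no further argument needed. There is no genuine obstacle here, since the work has already been done in the preceding proposition; the role of the corollary is simply to record that for $d=1$ the entire admissible range of $r$ collapses into the range already handled.

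The one thing I would double-check is that the invocation of the proposition is legitimate for every $r$ in the corollary's range, i.e.\ that no boundary case is excluded. Since $m \geq 1$ is assumed in the corollary and the proposition requires $m,d \geq 1$, this is immediate. Hence the proof is a one-line application of the preceding proposition.
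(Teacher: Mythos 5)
Your proposal is correct and matches the paper's own argument exactly: the paper likewise observes that for $d=1$ the range $m\leq r\leq \binom{m+1}{1}$ coincides with $\binom{m+d}{d}-d\leq r\leq \binom{m+d}{d}$, so the corollary follows immediately from the preceding proposition.
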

\begin{corollary}[m=1]
Given $d\geq 1$ and $1\leq r\leq \binom{d+1}{1}$, we have
\[u_r(d,1)=H_{r}'(d,1).\]
\end{corollary}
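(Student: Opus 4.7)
The plan is to derive this corollary immediately from the proposition proved just above, which establishes $u_r(d,m) = H'_{r-(m-1)}(d,m)$ throughout the range $\binom{m+d}{d}-d \leq r \leq \binom{m+d}{d}$. The only verification required is that, when $m=1$, this range coincides with the full range $1 \leq r \leq \binom{d+1}{1}$ that appears in the statement.

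First I would compute the endpoints of the proposition's range at $m=1$: we have $\binom{m+d}{d} = \binom{d+1}{d} = d+1$ and $\binom{m+d}{d} - d = (d+1) - d = 1$. Hence the condition $\binom{m+d}{d}-d \leq r \leq \binom{m+d}{d}$ reduces to $1 \leq r \leq d+1$, which is exactly the hypothesis $1 \leq r \leq \binom{d+1}{1}$ of the corollary. Consequently the proposition applies to every admissible $r$, and substituting $m=1$ yields
\[
u_r(d,1) = H'_{r-(1-1)}(d,1) = H'_r(d,1),
\]
as required.

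There is no substantive obstacle to overcome. The upper bound was already delivered by the proposition via Corollary~\ref{Cor: gx is 0} (which handles zero-dimensional subsets of size at most $d+1$ through a direct Hilbert-function count), and the matching lower bound comes from Proposition~\ref{Prop: lower bound urdm}. The whole argument for $m=1$ therefore collapses to the numerical identity $\binom{d+1}{d} - d = 1$, so the corollary is a genuine one-line consequence of earlier work and no further machinery is needed.
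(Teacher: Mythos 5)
Your argument is correct and is exactly the paper's: the authors likewise observe that for $m=1$ the full range $1\leq r\leq\binom{d+1}{1}$ coincides with the range $\binom{m+d}{d}-d\leq r\leq\binom{m+d}{d}$ of the preceding proposition, so the corollary follows by direct substitution. Nothing further is needed.
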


Next, we want to prove the case $r=m$ of Theorem~\ref{thm: cases of urdm}. We first recall a result from \cite{singhal2025conjecturebeelendattaghorpade}. Let $\deg_k(X)$ be the sum of the degrees of all $k$-dimensional components of $X$.

\begin{proposition}\cite[Proposition 13]{singhal2025conjecturebeelendattaghorpade}\label{Prop: bound on the weighted sum}
Suppose $F_1,\dots,F_r\in \kappa[x_0,\dots,x_m]$ are homogeneous polynomials of degree at most $d$ and let $X=V(F_1,\dots,F_r)$. Then we have
$$\sum_{j=1}^{m} d^{-j}\deg_{m-j}(X) \leq 1.$$
\end{proposition}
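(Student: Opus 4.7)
The plan is to prove this by a Bezout-style induction on the number $s$ of polynomials already imposed. For $0 \le s \le r$, set $X_s = V(F_1,\dots,F_s)$ (so $X_0 = \mathbb{P}^m$), and define the weighted invariant
\[
T^{(s)} = e_0^{(s)} + \sum_{c=1}^{m} d^{-c}\,\deg_{m-c}(X_s),
\]
where $e_0^{(s)} = 1$ if $X_s = \mathbb{P}^m$ and $0$ otherwise. Then $T^{(0)} = 1$, and since $e_0^{(r)} \ge 0$, the desired inequality will follow from $T^{(s+1)} \le T^{(s)}$ for every $s$.

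For the inductive step, fix $s$ and set $F = F_{s+1}$. If $F \equiv 0$, then $X_{s+1} = X_s$ and there is nothing to show, so assume $F$ is a nonzero polynomial of degree at most $d$. For each codimension $c \ge 1$, split $\deg_{m-c}(X_s) = A_c + B_c$, grouping the codim-$c$ components of $X_s$ according to whether $F$ vanishes on them (contributing $A_c$) or not (contributing $B_c$); set $A_0 = 0$ and $B_0 = e_0^{(s)}$, using that $F \not\equiv 0$ cannot vanish on the codim-$0$ component $\mathbb{P}^m$. The components counted by $A_c$ survive as codim-$c$ components of $X_{s+1}$. Those counted by $B_c$, when intersected with $V(F)$, yield (by Krull's height theorem) a subscheme of pure codimension $c+1$ whose total degree is bounded by $d \cdot B_c$ via the classical Bezout inequality. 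Combining,
\[
\deg_{m-c}(X_{s+1}) \le A_c + d\cdot B_{c-1}.
\]

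Plugging this bound into the defining sum for $T^{(s+1)}$, the term $d^{-c}\cdot d B_{c-1}$ reindexes to $d^{-(c-1)}B_{c-1}$, producing a telescoping that cancels each $B_c$ with $c < m$ against the corresponding term in $T^{(s)}$. The only uncanceled contribution is $d^{-m} B_m$, which sits in $T^{(s)}$ but has no slot in $T^{(s+1)}$ (there is no codim $m+1$ in $\mathbb{P}^m$). Hence
\[
T^{(s)} - T^{(s+1)} \ge d^{-m} B_m \ge 0,
\]
and iterating yields $T^{(r)} \le T^{(0)} = 1$. Dropping the nonnegative codim-$0$ term from $T^{(r)}$ gives the stated inequality. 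The only subtle bookkeeping is the codim-$0$ accounting (needed so that $T^{(0)} = 1$ initializes the induction) and invoking Bezout in the right generality for possibly non-reduced/reducible $X_s$; beyond that, the argument is a straightforward recursion, and I would expect the hardest part to be writing the telescoping cleanly rather than any genuine mathematical obstacle.
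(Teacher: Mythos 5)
This proposition is not proved in the present paper; it is imported by citation from \cite[Proposition 13]{singhal2025conjecturebeelendattaghorpade}, so there is no in-paper argument to compare against. Your proof is correct and is the natural refined-B\'ezout induction one would expect the cited reference to use: the monovariant $T^{(s)}$, the split of each codimension-$c$ stratum into components on which $F_{s+1}$ vanishes ($A_c$) versus not ($B_c$), and the telescoping $T^{(s)}-T^{(s+1)}\ge d^{-m}B_m\ge 0$ all check out. The only step worth writing out explicitly is the justification of $\deg_{m-c}(X_{s+1})\le A_c+dB_{c-1}$: every codimension-$c$ irreducible component $Z$ of $X_{s+1}$ lies in some component $Y$ of $X_s$, and one should verify that either $Z=Y$ with $F_{s+1}|_Y=0$ (so $\operatorname{codim}Y=c$), or $F_{s+1}|_Y\not\equiv 0$ and $Z$ is a component of $Y\cap V(F_{s+1})$, which by Krull forces $\operatorname{codim}Y=c-1$ and is then controlled by the refined B\'ezout inequality; this is routine but is the load-bearing point of the induction.
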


\begin{proposition}[r=m]
Given $m,d\geq 1$, we have
\[u_m(d,m)=H_{1}'(d,m).\]
\end{proposition}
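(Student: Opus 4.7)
The plan is as follows. First I would unwind the definitions to compute $H'_1(d,m)$ explicitly. The lexicographically largest element of $\Omega'(d,m)$ is $\omega'_1(d,m) = (d,0,\dots,0)$, so by definition
\[H'_1(d,m) = d\cdot d^{m-1} + 0 + \dots + 0 = d^m.\]
The lower bound $u_m(d,m) \geq d^m$ is already given by Proposition~\ref{Prop: lower bound urdm} (or more directly, by taking $m$ generic degree $d$ polynomials of the form $F_i = \prod_{j=1}^d (x_i - a_j x_0)$ as in the construction in that proposition).

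For the matching upper bound, I would invoke Proposition~\ref{Prop: bound on the weighted sum} in the $r = m$ case. Let $W \subseteq S_d(m,\kappa)$ have $\dim(W) = m$ and $\dim(V(W)) = 0$, and pick a basis $F_1,\dots,F_m$ of $W$. Setting $X = V(F_1,\dots,F_m) = V(W)$, the proposition yields
\[\sum_{j=1}^{m} d^{-j}\deg_{m-j}(X) \leq 1.\]
Since $X$ is zero-dimensional, $\deg_{m-j}(X) = 0$ for all $j < m$, and the only surviving term is $d^{-m}\deg_0(X)$. Hence $\deg_0(X) \leq d^m$. As $|V(W)| \leq \deg_0(X)$ (the degree of a zero-dimensional scheme counts points with multiplicity), we conclude $|V(W)| \leq d^m$, and therefore $u_m(d,m) \leq d^m = H'_1(d,m)$.

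There is essentially no obstacle here: the case $r = m$ is exactly the regime where Bézout-type bounds are tight and zero-dimensionality forces all intermediate degree contributions to vanish. The argument is really just recognizing that Proposition~\ref{Prop: bound on the weighted sum} specializes to the classical Bézout bound $|V(W)| \leq d^m$ when $\dim(V(W)) = 0$ and $\dim(W) = m$, and matching this against the easy lower bound construction.
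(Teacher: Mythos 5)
Your proof is correct and follows essentially the same route as the paper: the lower bound from Proposition~\ref{Prop: lower bound urdm} and the upper bound by specializing Proposition~\ref{Prop: bound on the weighted sum} to the zero-dimensional case, where only the $d^{-m}\deg_0(X)$ term survives. Your explicit remark that this is just the Bézout bound is a fair gloss, but the argument itself is identical to the paper's.
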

\begin{proof}
Notice that $H'_{1}(d,m)=d^m$.
From Proposition~\ref{Prop: lower bound urdm}, we know that $u_m(d,m)\geq H_{1}'(d,m)$.
Consider $W\subseteq S_d(m,\kappa)$ with $\dim(W)=m$ and $\dim(V(W))=0$. Proposition~\ref{Prop: bound on the weighted sum} says that $d^{-m}\deg_0(V(W))\leq 1$, meaning $|V(W)|\leq d^m$.  We conclude that $u_m(d,m)\leq H_{1}'(d,m)$ which completes the proof.
\end{proof}

\section{Computing $u_r(d,2)$}\label{Sec: m=2 urdm}

In this section, our goal is to prove the $m=2$ case of Theorem~\ref{thm: cases of urdm}. We will consider vector subspaces $W\subseteq S_d(2,\kappa)$ with $\dim(W)=r$ and $\gcd(W)=1$. This is because $\gcd(W)=1$ is equivalent to $\dim(V(W))=0$.
Since we have proved Proposition~\ref{Prop: lower bound urdm}, we need to show that for each such $W$, $|V(W)|\leq H_{r-1}'(d,2)$. We will in fact show that $|V(W)|_{\mm}\leq H_{r-1}'(d,2)$, which is stronger.
In this section, $\kappa$ continues to be an algebraically closed field.

We start with a lemma about such $W$.


\begin{lemma}\label{Lem: 2 coprime poly}
Let $W$ be a vector subspace of $\kappa[x_0,x_1,\dots,x_m]_{d}$. If $\gcd(W)=1$, then there is a pair of coprime polynomials in $W$.
\end{lemma}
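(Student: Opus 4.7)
The plan is to exhibit a companion to any chosen nonzero $f \in W$ by a prime-avoidance argument on the finitely many irreducible divisors of $f$. Pick any nonzero $f \in W$ and factor it in the UFD $\kappa[x_0,\dots,x_m]$ into distinct irreducibles as $f = p_1^{e_1}\cdots p_k^{e_k}$. For each $i$, define
\[W_i = \{g\in W : p_i \mid g\}.\]
Each $W_i$ is a $\kappa$-linear subspace of $W$, because divisibility by a fixed polynomial is a linear condition. A polynomial $g\in W$ is coprime to $f$ if and only if none of the $p_i$ divides $g$, i.e.\ $g \in W\setminus \bigcup_{i=1}^k W_i$. So it suffices to produce such a $g$.

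I would next argue that each $W_i$ is a \emph{proper} subspace of $W$. Indeed, if some $p_i$ divided every element of $W$, then $p_i$ would divide $\gcd(W)$, contradicting the hypothesis $\gcd(W)=1$. Hence $W_i \subsetneq W$ for each $1 \le i \le k$.

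To finish, I would invoke the standard fact that a vector space over an infinite field cannot be written as a finite union of proper subspaces. Since $\kappa$ is algebraically closed, it is infinite, so $W \ne W_1\cup\cdots\cup W_k$. Choose any $g$ in the complement; then $\gcd(f,g)=1$, and the pair $(f,g)$ lies in $W$ as required.

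The main obstacle is essentially a non-obstacle: the argument is short once one sets up the correct linear subspaces $W_i$. The only point to be careful about is the appeal to $\kappa$ being infinite (used via the union-of-subspaces lemma), and noting that linearity of the divisibility condition makes each $W_i$ a subspace rather than merely a subset. One could alternatively phrase the last step as choosing scalars $c_1,\dots,c_n$ generically so that a generic linear combination of a basis of $W$ lies outside each $W_i$, using that each $W_i$ is cut out inside $W$ by a nonzero linear form and $\kappa$ is infinite.
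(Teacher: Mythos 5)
Your proof is correct and follows essentially the same route as the paper's: both arguments fix one nonzero polynomial, observe that for each of its irreducible factors the elements of $W$ divisible by that factor form a proper linear subspace (properness coming from $\gcd(W)=1$), and then avoid the finite union of these proper subspaces using that $\kappa$ is infinite. The only cosmetic difference is that the paper runs the union-of-subspaces argument in the coefficient space $\kappa^t$ of combinations of chosen witnesses $F_1,\dots,F_t$, whereas you run it directly inside $W$, which is if anything slightly cleaner.
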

\begin{proof}
Pick a non-zero polynomial $F\in W$. Suppose that its irreducible factors are $f_1,\dots,f_t$. 

For each $1\leq i\leq t$, we know that there is a polynomial $F_i\in W$ that is not divisible by $f_i$.
For $1\leq i\leq t$, consider the map $\phi_i:\kappa^t\to \kappa[x_0,x_1,\dots,x_m]/f_i$ defined as
\[\phi_i(a_1,\dots,a_t)=\sum_{j=1}^t a_jF_j \pmod{f_i}.\]
Since $F_i\not\equiv0\pmod{f_i}$, we know that $\ker(\phi_i)\neq \kappa^t$. Since $\ker(\phi_i)$ is a vector subspace in $\kappa^t$, this means that $\dim(\ker(\phi_i))\leq t-1$. Since this holds for each $1\leq i\leq t$ and $\kappa$ is an algebraically closed field, we see that $\bigcup_{i=1}^{t}\ker(\phi_i) \neq \kappa^t$.

Pick $(a_1,\dots,a_t)\in\kappa^t$ which is not in $\bigcup_{i=1}^{t} \ker(\phi_i)$. Then $G=\sum_j a_j F_j$ is not divisible by any $f_i$. Thus, $\gcd(F,G)=1$.
\end{proof}

Recall that for $m=2$, Conjecture~\ref{Conjecture urdm} considers $r$ in the range $2\leq r\leq \binom{d+2}{2}$. This range can be subdivided as 
\[\left[2,\tbinom{d+2}{2}\right]
=\bigcup_{t=3}^{d}\left(1+\tbinom{d-t+1}{2}, 1+\tbinom{d-t+2}{2}\right] 
\bigcup (1+\tbinom{d-1}{2},\tbinom{d+2}{2}].\]
We first consider $r$ in the range $(1+\tbinom{d-1}{2},\tbinom{d+2}{2}]$. For this, we will need a lemma similar to Corollary~\ref{Cor: gx is 0}, saying that a small number of points impose independent conditions on polynomials of degree $d$.

\begin{lemma}\label{Lem: 3d-1 independent cond}
Suppose $G_1$ and $G_2$ are coprime polynomials in $S_d(2,\kappa)$ and $X\subseteq V(G_1,G_2)$ is a subscheme with $|X|_{\mm}\leq 3d-1$. Then
\[g_X(d)=0.\]
\end{lemma}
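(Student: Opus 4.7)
The plan is to use Cayley--Bacharach to translate $g_X(d)=0$ into a vanishing condition on polynomials of lower degree, then finish by a B\'ezout/gcd argument. Since $G_1,G_2$ are coprime of degree $d$, the scheme $\Gamma:=V(G_1,G_2)$ is a complete intersection of length $d^2$. Put $\Gamma':=X$ and let $\Gamma''$ be its residual in $\Gamma$, so
\[|\Gamma''|_{\mm}=d^2-|X|_{\mm}\geq d^2-3d+1.\]
Applying Corollary~\ref{Cor: g gamma' OP2 gamma''} with $a=b=d$ and $k=d$ (so $s-k=d-3$ and $\max(a,b)-2=d-2\leq k$) gives $g_{\Gamma'}(d)=\dim I_{d-3}(\Gamma'')$. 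It therefore suffices to show that no nonzero homogeneous polynomial of degree $d-3$ vanishes on $\Gamma''$. The cases $d\leq 3$ are immediate (no such polynomial exists when $d\leq 2$, and for $d=3$ the only constant vanishing on $\Gamma''\neq\emptyset$ is zero), so I focus on $d\geq 4$.

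Suppose for contradiction that $H\in I_{d-3}(\Gamma'')$ is nonzero. Since $H\in I(\Gamma'')$ and $G_1,G_2\in I(\Gamma)\subseteq I(\Gamma'')$, writing $R=\kappa[x_0,x_1,x_2]$ I get $|\Gamma''|_{\mm}\leq \dim_\kappa R/(G_1,G_2,H)$. Let $e:=\deg\gcd(H,G_1)$. If $e=0$ then $V(G_1,H)$ is a complete intersection of length $d(d-3)=d^2-3d$ by B\'ezout, contradicting $|\Gamma''|_{\mm}\geq d^2-3d+1$. Otherwise $e\geq 1$, and I write $G_1=PG_1'$, $H=PH_1$ with $\gcd(G_1',H_1)=1$, $\deg G_1'=d-e$, $\deg H_1=d-3-e$; in particular $e\leq d-3$, and $\gcd(P,G_2)=1$ because $P\mid G_1$ and $\gcd(G_1,G_2)=1$.

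The key step is the short exact sequence
\[0\longrightarrow R/(G_2,G_1',H_1)\xrightarrow{\;\cdot P\;} R/(G_1,G_2,H)\longrightarrow R/(P,G_2)\longrightarrow 0,\]
whose injectivity on the left uses $\gcd(P,G_2)=1$ and whose right-hand quotient uses $G_1=PG_1'\in(P)$. Taking lengths and applying B\'ezout to the complete intersections $V(G_1',H_1)$ and $V(P,G_2)$ gives
\[\dim_\kappa R/(G_1,G_2,H)\leq (d-e)(d-3-e)+ed=d^2-3d+e(e-d+3).\]
Since $1\leq e\leq d-3$, the factor $e(e-d+3)$ is nonpositive, so $|\Gamma''|_{\mm}\leq d^2-3d$, the desired contradiction. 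The main obstacle will be setting up and verifying this short exact sequence cleanly; once that is in hand, the B\'ezout estimates and the degree bound $e\leq d-3$ close the argument.
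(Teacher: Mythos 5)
Your reduction via Cayley--Bacharach is exactly the paper's: both proofs set $\Gamma=V(G_1,G_2)$, pass to the residual scheme $\Gamma''$ with $|\Gamma''|_{\mm}\geq d^2-3d+1$, and invoke Corollary~\ref{Cor: g gamma' OP2 gamma''} to turn the claim into $\dim I_{d-3}(\Gamma'')=0$. Where you diverge is the final step. The paper factors a putative $h\in I_{d-3}(\Gamma'')$ into irreducibles, observes that each irreducible factor is coprime to at least one of $G_1,G_2$ (since $\gcd(G_1,G_2)=1$), and applies B\'ezout factor by factor to get $|V(h)\cap\Gamma|_{\mm}\leq (d-3)d$ directly --- no case analysis, no homological input. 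You instead split off $P=\gcd(H,G_1)$ and bound the length of $V(G_1,G_2,H)$ via the short exact sequence
\[0\longrightarrow R/(G_2,G_1',H_1)\xrightarrow{\;\cdot P\;} R/(G_1,G_2,H)\longrightarrow R/(P,G_2)\longrightarrow 0,\]
landing at the same bound $d^2-3d+e(e-d+3)\leq d^2-3d$. Your sequence is exact (injectivity needs exactly $\gcd(P,G_2)=1$, which you have), and the degree bound $e\leq d-3$ does close the argument, so the proof is correct; the paper's route is simply lighter. Two small points of hygiene: $\dim_\kappa R/(G_1,G_2,H)$ is infinite as a graded vector space, so you should phrase the bound in terms of the length (degree) of the zero-dimensional scheme cut out by $(G_1,G_2,H)$; and when you ``take lengths'' in the exact sequence, note that multiplication by $P$ shifts the grading by $e$, so the additivity should be read off from Hilbert polynomials (or at the sheaf level), not from a fixed graded piece. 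Neither issue affects the substance.
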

\begin{proof}
Let $\Gamma=V(G_1,G_2)$. Since $G_1$ and $G_2$ are coprime, $\Gamma$ is zero-dimensional and a complete intersection.
Let $X'$ be the residual subscheme of $X$ in $\Gamma$.
By Corollary~\ref{Cor: g gamma' OP2 gamma''}, we know that
\[g_X(d)=\dim(I_{d-3}(X')).\]
Note that
\[ |X'|_{\mm}=|\Gamma|_{\mm}-|X|_{\mm}\geq  d^2-(3d-1)= (d-3)d+1.\]
Consider a polynomial $h$ of degree $d-3$. Suppose it factors as $h=h_1\dots h_n$. Then each $h_i$ must be coprime to at least one of $G_1$ or $G_2$. 
If it is coprime to $G_1$, then $|V(h_i)\cap \Gamma|_{\mm}\leq |V(h_i)\cap V(G_1)|_{\mm}=\deg(h_i)d$. Similarly, if it were coprime to $G_2$, then we will still have $|V(h_i)\cap \Gamma|_{\mm}\leq \deg(h_i)d$.
Thus
\[|V(h)\cap \Gamma|_{\mm}\leq \sum_{i=1}^{n} |V(h_i)\cap \Gamma|_{\mm}\leq \sum_{i=1}^{n}\deg(h_i)d =(d-3)d.\]
This means that $X'\not\subseteq V(h)$, so $\dim(I_{d-3}(X'))=0$. This completes the proof.
\end{proof}

\begin{lemma}\label{Lem: Xm in VW}
Suppose $Y$ is a zero-dimensional subscheme of $\Pm(\kappa)$. Then there is a further subscheme $X\subseteq Y$, such that $|X|_{\mm}=|Y|_{\mm}-1$.
\end{lemma}
\begin{proof}
Let $H$ be a linear hyperplane such that $Y \cap H = \emptyset$. By a change of coordinate, we can assume that $H=V(x_0)$. Then, $Y$ is in the non-vanishing locus of $x_0$. We can de-homogenize with respect to $x_0$ and consider the ideal $I \subseteq \kappa[y_1,\dots, y_m]$ which is the dehomogenization of $I(Y)$.
Denote $R=\kappa[y_1,\dots,y_m]/I$, so $Y=\Spec(R)$ and $|Y|_{\mm}=\dim_{\kappa}(R)$.
For each closed point $P$, denote the maximal ideal by $\m_P\subseteq R$ and denote the local rings as $R_P=\OO_{Y,P} \cong (\kappa[y_1,\dots, y_m]/I)_{\m_P}$.
Since $R$ is Artinian, we know that $R= \prod_{P\in\Supp(Y)} R_P$.

Pick a point $P\in\Supp(Y)$, let $l$ be the multiplicity of $Y$ at $P$.
By Definition \ref{Def: multiplicity and residual}, we know that $l=\dim_{\kappa}R_P$.
Since $\kappa$ is algebraically closed, we have $R_P/\m_P \cong \kappa$.  
Since $R_P$ is a local Artinian ring, its maximal ideal $\m_P$ is nilpotent. Hence, there exists $t$ such that $\m_P^t \neq 0 $ and $\m_P^{t+1} =0$. 
Pick $u \in \m_P^t$ such that $u \neq 0$. Then,
\[
\dim_{\kappa}(R_P/(u))
=\dim_\kappa(R_P)-\dim_\kappa(uR_P).
\]
We claim that $\dim_\kappa(uR_P)=1$. Since $u \neq 0$, we know that $\dim_\kappa(uR_P) \geq 1$. 
On the other hand, multiplication by $u$ annihilates $\m_P$, so $uR_P$ is a quotient of $R_P/\mathfrak{m}_P \cong \kappa$, so $\dim_\kappa(uR_P)=1$. Therefore, we get that $\dim_{\kappa}(R_P/(u))=l-1$.

Since $R= \prod_{Q\in\Supp(Y)} R_Q$, we can pick $\tilde{u}\in R$ such that $\tilde{u}=(u,0,\dots,0)$, where $u$ is at $P$ and zeros at all other points.
Recall that $R=\kappa[y_1,\dots,y_m]/I$ and let $f\in \kappa[y_1,\dots,y_m]$ be a lift of $\tilde{u}$.

Let $X=V(I+(f))$, it is a subscheme of $Y$.
Moreover, $X=\Spec(R/(\tilde{u}))$ and
\[
R/(\tilde{u})
=R_P/(u)
\times \prod_{\substack{Q\in\Supp(Y)\\ Q\neq P} R_Q}.
\]
So $\mult_P(X)=\mult_P(Y)-1$ and for other points $Q$, $\mult_Q(X)=\mult_Q(Y)$.
We conclude that $|X|_{\mm}=|Y|_{\mm}-1$.
%
%
%
\end{proof}

We are now ready to prove the $m=2$ case of Theorem~\ref{thm: cases of urdm} with the additional requirement that $1+\binom{d-1}{2}<r\leq \binom{d+2}{2}$.
\begin{proposition}\label{Prop: udr Case 2}
If $1+\binom{d-1}{2}<r\leq \binom{d+2}{2}$, then
\[u_r(d,2)=H'_{r-1}(d,2)=\tbinom{d+2}{2}-r.\]
\end{proposition}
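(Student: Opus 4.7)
The plan is to combine the lower bound from Proposition~\ref{Prop: lower bound urdm} with an upper bound proved by contradiction, using Lemma~\ref{Lem: 2 coprime poly} to produce a complete intersection containing $V(W)$ and then Lemma~\ref{Lem: 3d-1 independent cond} to force enough independent conditions on degree $d$ polynomials.

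Concretely, let $W\subseteq S_d(2,\kappa)$ with $\dim(W)=r$ and $\gcd(W)=1$. First I would invoke Lemma~\ref{Lem: 2 coprime poly} to obtain coprime $G_1,G_2\in W$ of degree $d$; then $\Gamma=V(G_1,G_2)$ is a zero-dimensional complete intersection and $V(W)\subseteq \Gamma$. Suppose for contradiction that $|V(W)|>\binom{d+2}{2}-r$, and choose a subscheme $X\subseteq V(W)$ with $|X|_{\mm}=\binom{d+2}{2}-r+1$.

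The key numerical check is that $|X|_{\mm}\leq 3d-1$ so that Lemma~\ref{Lem: 3d-1 independent cond} applies. This is exactly where the hypothesis $r>1+\binom{d-1}{2}$ is used: a direct computation shows
\[\tbinom{d+2}{2}-\Big(1+\tbinom{d-1}{2}\Big)+1 = 3d-1,\]
so for $r\geq 2+\binom{d-1}{2}$ we get $|X|_{\mm}\leq 3d-1$ as required.

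Then by Lemma~\ref{Lem: 3d-1 independent cond}, $g_X(d)=0$, i.e.\ $\dim(I_d(X))=\binom{d+2}{2}-|X|_{\mm}=r-1$. Since $W$ consists of degree $d$ polynomials vanishing on $X$, we have $W\subseteq I_d(X)$, giving
\[r=\dim(W)\leq \dim(I_d(X))=r-1,\]
a contradiction. Hence $|V(W)|\leq \binom{d+2}{2}-r$, and combined with Proposition~\ref{Prop: lower bound urdm} this proves $u_r(d,2)=\binom{d+2}{2}-r$. There is no real obstacle here beyond verifying the inequality $|X|_{\mm}\leq 3d-1$; all the heavy lifting (Cayley--Bacharach, the existence of coprime pairs, the lower bound construction) has already been done in the preceding sections.
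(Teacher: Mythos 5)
Your proof is correct and follows essentially the same route as the paper's: the lower bound from Proposition~\ref{Prop: lower bound urdm}, then a contradiction via Lemma~\ref{Lem: 3d-1 independent cond} after checking $|X|_{\mm}\le 3d-1$ (the paper leaves the appeal to Lemma~\ref{Lem: 2 coprime poly} implicit, which you rightly make explicit). One small arithmetic slip: your displayed identity should be $\tbinom{d+2}{2}-\bigl(2+\tbinom{d-1}{2}\bigr)+1=3d-1$; as written with $1+\tbinom{d-1}{2}$ the left-hand side equals $3d$, though your following sentence using $r\ge 2+\tbinom{d-1}{2}$ gives the correct bound.
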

\begin{proof}
By Lemma~\ref{Lem: formula for H' m=2} and Proposition~\ref{Prop: lower bound urdm} we only need to show that $u_r(d,2)\leq \binom{d+2}{2}-r$. Assume for the sake of contradiction that $u_r(d,2)> \binom{d+2}{2}-r$. This means that there is some $W\subseteq S_d(2,\kappa)$, with $\dim(W)=r$, $\gcd(W)=1$ and 
\[
|V(W)|_{\mm}\geq
\tbinom{d+2}{2}-r+1.
\] 

By Lemma \ref{Lem: Xm in VW}, we can pick a subscheme $X\subseteq V(W)$ with 
\[
|X|_{\mm}
=\tbinom{d+2}{2}-r+1.
\] 
Since $W\subseteq I(X)$, we know that  $\dim(I_d(X)) \geq r$.

By Lemma~\ref{Lem: 2 coprime poly}, we know that there are two coprime polynomials $G_1, G_2\in W$. Clearly $X\subseteq V(G_1,G_2)$.
Next, notice that
\[
|X|_{\mm}
\leq \tbinom{d+2}{2} -\Big(\tbinom{d-1}{2}+2\Big) +1
=3d-1.
\]
By Lemma~\ref{Lem: 3d-1 independent cond}, we know that $g_{X}(d)=0$ and therefore
\[r\leq \dim(I_d(X)) =\tbinom{d+2}{2}-|X|_{\mm}+0=r-1.\]
This is a contradiction and hence $|V(W)|_{\mm}\leq
\tbinom{d+2}{2}-r$.
\end{proof}

Next, we consider the ranges $\left(1+\binom{d-t+1}{2}, 1+\binom{d-t+2}{2}\right]$ for $3\leq t\leq d$.
In order to prove the $m=2$ case of Theorem~\ref{thm: cases of urdm} with these ranges, we will show an analog of Lemma~\ref{Lem: 3d-1 independent cond}. This is, if a subscheme $X\subseteq V(G_1,G_2)$ does not have too many points, then $g_X(d)$ is small.
We first show that the bound on $g_X(d)$ implies $u_r(d,2)\leq H'_{r-1}(d,2)$ for $r$ in this range.

\begin{lemma}\label{Lem: cond on gx implies udr}
Consider positive integers $t$, $d$ satisfying $3\leq t\leq d$.

If for any coprime polynomials $G_1$, $G_2$ in $S_d(2,\kappa)$ and for any subscheme $X\subseteq V(G_1,G_2)$, with $|X|_{\mm}\leq td+d-t+1$, it is known that
\[g_X(d)\leq \tbinom{t-1}{2}.\]
Then, for each $r$ in the range
\[1+\tbinom{d-t+1}{2} <r\leq 1+\tbinom{d-t+2}{2},\]
we have
\[u_r(d,2)\leq td+1+\tbinom{d-t+2}{2}-r.\]
\end{lemma}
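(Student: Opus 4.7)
The plan is to argue by contradiction along the same lines as the proof of Proposition~\ref{Prop: udr Case 2}, but invoking the hypothesized bound $g_X(d)\le \tbinom{t-1}{2}$ in place of the sharper $g_X(d)=0$ used there. First, suppose for contradiction that $u_r(d,2) \geq td + 2 + \tbinom{d-t+2}{2} - r$. Unpacking the definition of $u_r(d,2)$, this furnishes a subspace $W\subseteq S_d(2,\kappa)$ with $\dim W = r$, $\gcd(W)=1$, and $|V(W)|_{\mm}\ge td + 2 + \tbinom{d-t+2}{2} - r$.

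Next, I would apply Lemma~\ref{Lem: 2 coprime poly} to $W$ to produce coprime polynomials $G_1,G_2\in W$. Since every element of $W$ vanishes on $V(W)$, we have $V(W)\subseteq V(G_1,G_2)$, and the latter is a zero-dimensional complete intersection of length $d^2$. Now choose a subscheme $X\subseteq V(W)$ of length exactly $|X|_{\mm} = td+2+\tbinom{d-t+2}{2}-r$; this is possible because $V(W)$ is zero-dimensional and of sufficient length. Using $r\ge 2+\tbinom{d-t+1}{2}$ together with the elementary identity $\tbinom{d-t+2}{2}-\tbinom{d-t+1}{2}=d-t+1$, this length satisfies $|X|_{\mm}\le td+d-t+1$. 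Hence the hypothesis of the lemma applies to the triple $(G_1,G_2,X)$, and we conclude $g_X(d)\le \tbinom{t-1}{2}$.

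Finally, since $X\subseteq V(W)$ as subschemes, we have $W\subseteq I_d(X)$, so
\[r=\dim W \;\le\; \dim I_d(X) \;=\; \tbinom{d+2}{2} - |X|_{\mm} + g_X(d) \;\le\; \tbinom{d+2}{2} - |X|_{\mm} + \tbinom{t-1}{2}.\]
Substituting $|X|_{\mm} = td+2+\tbinom{d-t+2}{2}-r$ cancels the $r$ on both sides and reduces everything to a purely numerical inequality,
\[td+2 \;\le\; \tbinom{d+2}{2}-\tbinom{d-t+2}{2}+\tbinom{t-1}{2}.\]
A direct expansion (the $t^2$ terms cancel) shows the right-hand side equals $td+1$, producing the contradiction $2\le 1$. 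The main obstacle, such as it is, is confirming that the bound $\tbinom{t-1}{2}$ is tight enough to close the argument: the binomial identity $\tbinom{d+2}{2}-\tbinom{d-t+2}{2}+\tbinom{t-1}{2}=td+1$ is precisely what makes the bookkeeping work, and any strictly larger bound on $g_X(d)$ would fail to yield a contradiction. The real mathematical content has already been shifted into the hypothesis on $g_X(d)$, whose proof for general $t$ is the deeper step that presumably occupies the remainder of Section~\ref{Sec: m=2 urdm}.
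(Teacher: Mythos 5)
Your proposal is correct and follows essentially the same argument as the paper: assume the bound fails, extract coprime $G_1,G_2\in W$ via Lemma~\ref{Lem: 2 coprime poly}, pick a subscheme $X\subseteq V(W)\subseteq V(G_1,G_2)$ of length $td+2+\tbinom{d-t+2}{2}-r\le td+d-t+1$, apply the hypothesized bound on $g_X(d)$, and derive the contradiction from $r\le\dim I_d(X)$. The only cosmetic difference is that you reduce to the numerical identity $\tbinom{d+2}{2}-\tbinom{d-t+2}{2}+\tbinom{t-1}{2}=td+1$ while the paper phrases the same computation as $r\le r-1$.
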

\begin{proof}
Assume for the sake of contradiction that $u_r(d,2)> td+1+\binom{d-t+2}{2}-r$. This means that there is some $W\subseteq S_d(2,\kappa)$ with $\dim(W)=r$, $\gcd(W)=1$ and 
\[
|V(W)|_{\mm}
\geq td+2+\tbinom{d-t+2}{2}-r.
\]
By Lemma~\ref{Lem: Xm in VW}, we can pick a subscheme $X_1\subseteq V(W)$, for which 
\[
|X_1|_{\mm}
=td+2+\tbinom{d-t+2}{2}-r.
\]
Note that
\[
|X_1|_{\mm} 
\leq td+2+\tbinom{d-t+2}{2} -\big(2+\tbinom{d-t+1}{2}\big) 
=td+d-t+1.
\]
From Lemma~\ref{Lem: 2 coprime poly}, we know that there are coprime polynomials $G_1$, $G_2$ in $W$. This means $X_1\subseteq V(G_1,G_2)$ and $|X_1|_{\mm}\leq td+d-t+1$. Thus, by the assumption in the statement, we know that $g_{X_1}(d)\leq \binom{t-1}{2}$. We see that
\begin{align*}
r\leq &\dim(I_{d}(V(W))) \leq \dim(I_d(X_1))
=\tbinom{d+2}{2} -|X_1|_{\mm} +g_{X_1}(d)\\
&\leq \tbinom{d+2}{2} -\big(td+2+\tbinom{d-t+2}{2}-r\big) +\tbinom{t-1}{2}\\
&=r-1.
\end{align*}
This is a contradiction.
\end{proof}

We will now prove by induction on $d$ that if $|X|_{\mm}\leq td+d-t+1$, then $g_X(d)\leq \binom{t-1}{2}$. By the induction hypothesis, we know that this holds for $d'<d$. Hence, by Lemma~\ref{Lem: cond on gx implies udr} and Proposition~\ref{Prop: udr Case 2}, we know that $u_{r'}(d',2)\leq H'_{r'-1}(d',2)$.

\begin{proposition}\label{Prop: cond on gx udr}
Consider positive integers $t$, $d$ satisfying $3\leq t\leq d$.
Given coprime polynomials $G_1$, $G_2$ in $S_d(2,\kappa)$ and a subscheme $X\subseteq V(G_1,G_2)$, with $|X|_{\mm}\leq td+d-t+1$, we have
\[g_X(d)\leq \tbinom{t-1}{2}.\]
\end{proposition}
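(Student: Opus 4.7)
The plan is to prove the proposition by induction on $d$. The base case $d=3$ forces $t=3$, and the claim follows immediately since $\dim I_0(X'')\in\{0,1\}=\{0,\binom{2}{2}\}$.

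For the inductive step, first apply Corollary~\ref{Cor: g gamma' OP2 gamma''} to $\Gamma=V(G_1,G_2)$ with $a=b=d$ and $k=d$ to translate the target into
\[g_X(d) = \dim I_{d-3}(X''),\]
where $X''$ is the residual of $X$ in $\Gamma$. The length hypothesis $|X|_{\mm}\le td+d-t+1$ becomes the complementary bound $|X''|_{\mm}\ge d^2-(t+1)d+(t-1)$. Assume for contradiction that $r':=\dim I_{d-3}(X'')\ge\binom{t-1}{2}+1$ and analyze $W:=I_{d-3}(X'')\subseteq S_{d-3}(2,\kappa)$ according to $\gcd(W)$.

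When $\gcd W=1$, Lemma~\ref{Lem: 2 coprime poly} furnishes coprime $h_1,h_2\in W$, so scheme-theoretically $X''\subseteq V(h_1,h_2)$ and Bezout forces $|X''|_{\mm}\le(d-3)^2$; this alone contradicts the lower bound for a large range of $(t,d)$. In the remaining parameter windows, I plan to apply Cayley-Bacharach a second time inside $V(h_1,h_2)$ and then invoke the inductive hypothesis---channeled through Lemma~\ref{Lem: cond on gx implies udr} together with Proposition~\ref{Prop: udr Case 2}, which at degree $d-3<d$ give the full $m=2$ case of Theorem~\ref{thm: cases of urdm}---to close the gap. When instead $\gcd W=g$ has positive degree $e$, write $W=gW'$ with $\gcd W'=1$ and $\dim W'=r'$, and split $X''$ scheme-theoretically according to whether its support meets $V(g)$: at any point of $X''$ lying off $V(g)$ the factor $g$ is a local unit, so every element of $W'$ lies in the local ideal of $X''$, and hence this part of $X''$ is contained in the complete intersection of two coprime elements of $W'$, of length at most $(d-3-e)^2$ by Bezout. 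The remaining part of $X''$ is confined to $V(g)\cap\Gamma$, whose length is bounded by $ed$ because every irreducible factor of $g$ is coprime to at least one of $G_1,G_2$ (Bezout factor by factor). Combining yields $|X''|_{\mm}\le ed+(d-3-e)^2$, which I compare against the lower bound $|X''|_{\mm}\ge d^2-(t+1)d+(t-1)$ and, if necessary, sharpen using the inductive hypothesis applied to $W'\subseteq S_{d-3-e}(2,\kappa)$.

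The main obstacle is the transfer from reduced point counts---the natural habitat of the inductive hypothesis via $u_{r'}(d',2)=H'_{r'-1}(d',2)$---to the multiplicity-weighted lengths that govern $g_X(d)$ and $|X''|_{\mm}$. Bezout applied to coprime pairs chosen from $W$ (in Case A) or $W'$ (in Case B) gives the required scheme-theoretic containments, but uniformly covering every $(t,d)$ with $3\le t\le d$ is delicate, and I expect the appendix proof to weave together Bezout, the local-unit argument of Case B, and a secondary Cayley-Bacharach inside a smaller complete intersection of Case A.
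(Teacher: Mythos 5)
Your skeleton matches the paper's proof almost exactly: induction on $d$, Cayley--Bacharach (Corollary~\ref{Cor: g gamma' OP2 gamma''}) to convert $g_X(d)$ into $\dim I_{d-3}$ of the residual scheme, a contradiction hypothesis $\dim I_{d-3}\geq\binom{t-1}{2}+1$, a split according to $g=\gcd$ of that linear system, the bound $|V(g)\cap\Gamma|_{\mm}\leq ed$ by factoring $g$ and applying B\'ezout against whichever of $G_1,G_2$ each factor misses, and the induction hypothesis fed back through Lemma~\ref{Lem: cond on gx implies udr} and Proposition~\ref{Prop: udr Case 2} to bound the residual points lying on $V(W')$. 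However, there is a genuine gap: you never carry out the quantitative step that actually produces the contradiction, and the places where you wave at it are not reliable. First, your claim that the plain B\'ezout bound $|X''|_{\mm}\leq(d-3)^2$ "alone contradicts the lower bound for a large range of $(t,d)$" is backwards: the inequality $(d-3)^2<d^2-(t+1)d+(t-1)$ is equivalent to $(t-5)d<t-10$, which holds only for $t=3$ (all $d\geq4$) and $t=4$ (only $d\geq7$), and fails for every $t\geq5$. So the "remaining parameter windows" are in fact almost all of the cases, and the second Cayley--Bacharach you propose there is neither needed nor what closes the argument.

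What is actually needed, and what the paper supplies, is the uniform estimate $u_{\binom{t-1}{2}+1}(d-3-e,2)\leq(d-e-t)(d-e-3)$ for all $0\leq e$, obtained from the induction hypothesis via Lemma~\ref{Lem: cond on gx implies udr} when $t_1:=d-e-t\geq3$ and from Proposition~\ref{Prop: udr Case 2} when $t_1\in\{1,2\}$; this requires first checking $e<d-t$ (which follows from $\binom{t-1}{2}+1\leq\binom{d-1-e}{2}$), a point you omit. Combining with $|V(g)\cap\Gamma|_{\mm}\leq ed$ gives
\[
d^2-dt-d+t-1\;\leq\; ed+(d-e-t)(d-e-3)\;=\;e^2-(d-t-3)e+(d-t)(d-3),
\]
and the final step is to maximize the quadratic $e^2-(d-t-3)e$ over the interval $0\leq e\leq d-t-1$ (the maximum is at an endpoint and equals $2(d-t-1)$), which forces $d^2-dt-d+t-1\leq d^2-dt-d+t-2$, a contradiction. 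None of this optimization over $e$, nor the case analysis on $t_1$, appears in your proposal; "if necessary, sharpen using the inductive hypothesis" is precisely where the whole proof lives. Your base case is also too thin: the paper disposes of $t=d$ for every $d$ at once via Corollary~\ref{cor: g gamma curve a,b} ($g_X(d)\leq g_\Gamma(d)=\binom{d-1}{2}$), whereas you only treat $d=3$; this is repairable (for $t=d$ the contradiction hypothesis already exceeds $\dim S_{d-3}(2,\kappa)=\binom{d-1}{2}$), but it needs to be said, since the inductive step as you set it up assumes $t\leq d-1$ implicitly when you form $t_1$.
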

\begin{proof}
We prove this by induction on $d$. Assume that the statement is known for smaller values of $d$.
Let $\Gamma=V(G_1,G_2)$. First, if $t=d$, then it follows from Corollary~\ref{Cor: gxd increasing} that
\[g_{X}(d)\leq g_{\Gamma}(d)=\tbinom{d-1}{2}.\]
This finishes the base case $d=3$, since $d=3$ forces $t=d$. This also deals with the $t=d$ case for larger $d$.

Now suppose $3\leq t\leq d-1$, so $d\geq 4$.
Let $X'$ be the residual subscheme of $X$ in $\Gamma$. So
\[|X'|_{\mm}=d^2-|X|_{\mm}\geq d^2-dt -d+t-1.\]
By Lemma \ref{Lem: Xm in VW}, we can pick a subscheme $Y\subseteq X'$ such that $|Y|_{\mm}= d^2-dt-d+t-1$.
Corollary~\ref{Cor: g gamma' OP2 gamma''} implies that $g_{X}(d)=\dim(I_{d-3}(X'))$. This means $g_{X}(d)\leq \dim(I_{d-3}(Y))$.

Assume for the sake of contradiction that $g_{X}(d)\geq \tbinom{t-1}{2}+1$.
This implies
\[\dim(I_{d-3}(Y))\geq \tbinom{t-1}{2}+1.\]
From here we want to bound $|Y|_{\mm}$ and derive a contradiction.
Denote
\begin{align*}
r_1&=\dim(I_{d-3}(Y)),
&
g&=\gcd(I_{d-3}(Y)),
&
e&=\deg(g).
\end{align*}
It is possible that $e=0$, if $g=1$. Say $I_{d-3}(Y)=gW$, so $\gcd(W)=1$ and
\[\dim(W)=\dim(I_{d-3}(Y))=r_1\geq \tbinom{t-1}{2}+1.\]
Let $d_1=d-3-e$, so the polynomials in $W$ are of degree $d_1$.

Now $I_{d-3}(Y)=gW$, so $Y\subseteq V(g)\cup V(W)$, and hence
\[Y\subseteq \big(V(g)\cap \Gamma \big)\cup V(W).\]
We will bound $|Y|_{\mm}$ by bounding $|V(g)\cap \Gamma|_{\mm}$ and $|V(W)|_{\mm}$ separately.
Notice that
\[d(d-t)-d+t-1 = |Y|_{\mm} \leq |V(g)\cap \Gamma|_{\mm} + |V(W)|_{\mm}\leq |V(g)\cap \Gamma|_{\mm} +u_{r_1}(d_1,2).\]

We first bound $|V(g)\cap \Gamma|_{\mm}$.
Suppose $g$ factors as $g=g_1\dots g_u$. Then for each $g_i$, it does not divide at least one of $G_1$ or $G_2$ (since $\gcd(G_1,G_2)=1$). Thus, 
\[
|V(g_i)\cap V(G_1,G_2)|_{\mm}
\leq \deg(g_i)d.
\] 
It follows that $|V(g)\cap \Gamma|_{\mm}\leq ed$.

Next, we will bound $u_{r_1}(d_1,2)$. We will use the induction hypothesis as $d_1<d$.
Note that
\[1+\tbinom{t-1}{2}\leq r_1=\dim(I_{d-3}(Y)) =\dim(W)\leq \dim(\kappa[x_0,x_1,x_2]_{d-3-e}).\]
This implies $1+\tbinom{t-1}{2}\leq \binom{d-1-e}{2}$ and hence $e<d-t$.
Next,
\[|V(W)|_{\mm}\leq u_{r_1}(d_1,2)\leq u_{\binom{t-1}{2}+1}(d_1,2).\]
Let $t_1=d_1-t+3=d-e-t$, so $t-1= d_1-t_1+2$ and
\[1+\tbinom{d_1-t_1+1}{2}< \tbinom{t-1}{2}+1 =\tbinom{d_1-t_1+2}{2}+1.\]
The condition $e<d-t$ means $t_1>0$.

\begin{enumerate}
    \item Case 1: $e\leq d-t-3$. This means $t_1\geq 3$.
    We know that $d_1<d$, so the statement is known for $d_1$ by the induction hypothesis. Then Lemma \ref{Lem: cond on gx implies udr} says that
    \[u_{\binom{t-1}{2}+1}(d_1,2) \leq t_1 d_1+1+\tbinom{d_1-t_1+2}{2} -(\tbinom{t-1}{2}+1)=t_1d_1.\]

    \item Case 2: $d-t-2 \leq e\leq d-t-1$, so $1\leq t_1\leq 2$. Then Proposition~\ref{Prop: udr Case 2} implies that
    \[u_{\binom{t-1}{2}+1}(d_1,2)=\tbinom{d_1+2}{2}-\tbinom{t-1}{2}-1 = \tbinom{d_1+2}{2}-\tbinom{d_1-t_1+2}{2}-1.\]
    Since $t_1\in\{1,2\}$, this simplifies to $t_1d_1$.
\end{enumerate}
Therefore in both cases we have 
\[u_{\binom{t-1}{2}+1}(d_1,2)\leq t_1d_1.\]
We see that
\begin{align*}
d^2-dt-d+t-1 &= |Y|_{\mm} \leq ed+ |V(W)|_{\mm}\\
&\leq ed+ (d-e-t) (d-e-3)\\
&= e^2 -(d-t-3)e +(d-t)(d-3).
\end{align*}
Since $0\leq e\leq  d-t-1$, by the shape of the parabola, we see that 
\[e^2 -(d-t-3)e\leq 2(d-t-1).\]
and hence
\[d^2-dt-d+t-1\leq 2(d-t-1) +(d-t)(d-3).\]
This simplifies to 
\[d^2-dt-d+t-1\leq d^2-dt-d+t-2.\]
which is a contradiction.
\end{proof}

\begin{theorem}\label{Thm: urd2}
Given $d\geq 1$ and $2\leq r\leq \binom{d+2}{2}$, we have
\[u_r(d,2)=H_{r-1}'(d,2).\]
\end{theorem}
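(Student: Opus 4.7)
The plan is to assemble Theorem~\ref{Thm: urd2} from the results already established in this section, matching a lower bound with a case-by-case upper bound.

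First, Proposition~\ref{Prop: lower bound urdm} immediately gives the lower bound $u_r(d,2) \geq H'_{r-1}(d,2)$, so the entire remaining task is to produce the matching upper bound. For this, I would split the range $2 \leq r \leq \binom{d+2}{2}$ into two regimes depending on the size of $r$.

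In the first regime $1+\binom{d-1}{2} < r \leq \binom{d+2}{2}$, Proposition~\ref{Prop: udr Case 2} directly yields $u_r(d,2) \leq \binom{d+2}{2}-r$, which by the second line of Lemma~\ref{Lem: formula for H' m=2} coincides with $H'_{r-1}(d,2)$. In the complementary regime $2 \leq r \leq 1+\binom{d-1}{2}$ (which only arises when $d \geq 3$), there is a unique integer $t$ with $3 \leq t \leq d$ such that
\[
1+\tbinom{d-t+1}{2} < r \leq 1+\tbinom{d-t+2}{2}.
\]
Proposition~\ref{Prop: cond on gx udr} verifies that the hypothesis of Lemma~\ref{Lem: cond on gx implies udr} is satisfied, and that lemma then gives
\[
u_r(d,2) \leq td + 1 + \tbinom{d-t+2}{2} - r,
\]
which by the first line of Lemma~\ref{Lem: formula for H' m=2} equals $H'_{r-1}(d,2)$. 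Combining the upper bound in each regime with the universal lower bound from Proposition~\ref{Prop: lower bound urdm} closes the argument.

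There is no substantive new obstacle here; the technical heart of the theorem lives inside Proposition~\ref{Prop: cond on gx udr} (the inductive bound $g_X(d)\leq \binom{t-1}{2}$) together with the independent-conditions argument of Lemma~\ref{Lem: 3d-1 independent cond} powering Proposition~\ref{Prop: udr Case 2}. The present proof is purely synthetic: locate $r$ in its interval, invoke the appropriate upper bound from one of the two propositions, and read off the explicit formula for $H'_{r-1}(d,2)$ from Lemma~\ref{Lem: formula for H' m=2} to identify it with the bound obtained.
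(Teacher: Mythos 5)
Your proposal is correct and follows exactly the route the paper takes: the paper's own proof of Theorem~\ref{Thm: urd2} is precisely the assembly of Proposition~\ref{Prop: lower bound urdm} for the lower bound, Proposition~\ref{Prop: udr Case 2} for the top range of $r$, Lemma~\ref{Lem: cond on gx implies udr} combined with Proposition~\ref{Prop: cond on gx udr} for the intervals indexed by $3\leq t\leq d$, and Lemma~\ref{Lem: formula for H' m=2} to identify the resulting bounds with $H'_{r-1}(d,2)$. Your write-up simply makes this synthesis explicit, and the interval decomposition and case-matching are all accurate.
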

\begin{proof}
This follows from Proposition~\ref{Prop: udr Case 2}, Proposition~\ref{Prop: lower bound urdm}, Lemma~\ref{Lem: formula for H' m=2}, Lemma~\ref{Lem: cond on gx implies udr} and Proposition~\ref{Prop: cond on gx udr}.
\end{proof}

This completes the proof of Theorem~\ref{thm: cases of urdm}.

\section{Computing $e_r(d,2;q)$}\label{Sec: m=2 erdm}

In this section, our goal is to prove Theorem~\ref{GDC for m=2}. First, we recall a result of Beelen, Datta, and Ghorpade \cite{beelen2022combinatorial} which states that their conjectured formula $f_r(d,m;q)$ is at least a lower bound for $e_r(d,m;q)$.
In this section, we will work over a finite field $\fq$.

\begin{proposition}\cite[Theorem 2.3]{beelen2022combinatorial}\label{Lower bound for erdm}\\
Suppose we are given $1 \leq r \leq \binom{m+d}{d}$ and $q\geq d+1$. Then we have
$$e_r(d,m;q)\geq f_r(d,m;q).$$
\end{proposition}

Therefore, we only need to prove that $e_r(d,2;q)\leq f_r(d,2;q)$. This means that, given a subspace $W$ of $S_d(2,\fq)$ of dimension $r$, we want to show that $|V(W)(\fq)|\leq f_r(d,2;q)$.
We decompose $V(W)$ as $X_1\cup X_0$, where $X_1$ has all the one-dimensional components of $V(W)$ and $X_0$ has the zero-dimensional components. Then $X_1=V(g)$ where $g=\gcd(W)$, so
\[
|X_1(\fq)|
\leq e_1(\deg(g),2)
=f_1(\deg(g),2)
=\deg(g) q+1.
\]
Moreover, we can bound $|X_0|$ using Theorem~\ref{Thm: urd2}. We recall a lemma from \cite{singhal2025conjecturebeelendattaghorpade}, which allows us to combine the two bounds.

\begin{lemma}\cite[Lemma 6.8]{singhal2025conjecturebeelendattaghorpade}\label{Lem d-c to d}
Given $m\geq 1$, $1\leq c\leq d-1$ and $1\leq r\leq \binom{m+d-c}{d-c}$, we have
$$H_r(d-c,m;q)+cq^{m-1}\leq H_r(d,m;q).$$
\end{lemma}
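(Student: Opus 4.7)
\emph{Proof proposal.} The plan is to show that the inequality is in fact an equality, obtained by identifying $\omega_r(d,m)$ explicitly in terms of $\omega_r(d-c,m)$. First I would introduce the shift map
\[\Phi:\Omega(d-c,m)\to\Omega(d,m),\qquad \Phi(\gamma_1,\gamma_2,\dots,\gamma_{m+1})=(\gamma_1+c,\gamma_2,\dots,\gamma_{m+1}).\]
This is well-defined since $\sum_i\gamma_i=d-c$ implies $(\gamma_1+c)+\gamma_2+\dots+\gamma_{m+1}=d$. It is injective, it is strictly order-preserving for lexicographic order (it only adds a fixed constant to the first coordinate), and its image is exactly the subset
\[\Omega^{\geq c}(d,m):=\{(\beta_1,\dots,\beta_{m+1})\in\Omega(d,m):\beta_1\geq c\}.\]

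Next, I would observe that $|\Omega^{\geq c}(d,m)|=|\Omega(d-c,m)|=\binom{m+d-c}{d-c}$, and that every tuple in $\Omega^{\geq c}(d,m)$ is strictly lex-larger than every tuple in $\Omega(d,m)\setminus\Omega^{\geq c}(d,m)$, since in the latter set the first coordinate is $<c$. Thus the $\binom{m+d-c}{d-c}$ lex-largest elements of $\Omega(d,m)$ are precisely $\Omega^{\geq c}(d,m)$. Since $r\leq\binom{m+d-c}{d-c}$ by hypothesis, $\omega_r(d,m)$ lies in the image of $\Phi$, and order preservation forces $\omega_r(d,m)=\Phi(\omega_r(d-c,m))$.

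Writing $\omega_r(d-c,m)=(\gamma_1,\dots,\gamma_{m+1})$, the previous step gives $\omega_r(d,m)=(\gamma_1+c,\gamma_2,\dots,\gamma_{m+1})$, so
\[H_r(d,m;q)=(\gamma_1+c)q^{m-1}+\sum_{i=2}^m\gamma_iq^{m-i}=cq^{m-1}+H_r(d-c,m;q),\]
which yields the stated inequality (in fact with equality). There is no substantive obstacle here: the only thing to check carefully is that $\Phi$ preserves lex order and that its image sits at the top of $\Omega(d,m)$, both immediate from the dominance of the first coordinate in lex comparisons.
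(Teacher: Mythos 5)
Your proposal is correct, and it is self-contained: the shift map $\Phi(\gamma_1,\dots,\gamma_{m+1})=(\gamma_1+c,\gamma_2,\dots,\gamma_{m+1})$ is indeed an injective, lex-order-preserving bijection from $\Omega(d-c,m)$ onto the set of tuples in $\Omega(d,m)$ with first coordinate at least $c$, and since those $\binom{m+d-c}{d-c}$ tuples dominate all others in lex order, $\omega_r(d,m)=\Phi(\omega_r(d-c,m))$ for $r$ in the stated range; the identity $H_r(d,m;q)=cq^{m-1}+H_r(d-c,m;q)$ follows at once. Note that the paper does not prove this lemma itself but imports it as \cite[Lemma 6.8]{singhal2025conjecturebeelendattaghorpade}, so there is no in-paper argument to compare against; your argument is a legitimate direct proof, and it in fact establishes the stronger statement that the inequality is an equality, which is consistent with the quick sanity checks (e.g. $m=2$, $d=3$, $c=1$, $r=2$ gives $2q+1$ on both sides). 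The only point worth stating explicitly, which you implicitly use, is that a strictly order-preserving bijection between the source and the top segment of the target matches $r$-th largest elements to $r$-th largest elements; that is immediate.
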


We are now ready to prove Theorem~\ref{GDC for m=2}.

\begin{customthm}{\ref{GDC for m=2}}
For $d\geq 1$, $1\leq r\leq \binom{d+2}{2}$ and $q\geq d+1$, we have
\[e_r(d,2;q)=f_r(d,2;q).\]
\end{customthm}
\begin{proof}
By Proposition \ref{Lower bound for erdm}, it is already known that $e_r(d,2;q)\geq f_r(d,2;q)$. Now, consider $W\subseteq S_d(2)$ of dimension $r$. Say $\gcd(W)=g$ and $W=gW_1$.
\begin{enumerate}
\item Case 1: $\deg(g)=0$, that is, $\gcd(W)=1$. Then it follows from Theorem~\ref{Thm: urd2}, Proposition~\ref{Prop: simple expression frdm} and Proposition~\ref{Prop: H' smaller than H} that
\[
|V(W)(\fq)|
\leq
|V(W)|_{\mm}
\leq H'_{r-1}(d,2)
\leq H_{r}(d,2;q)
\leq f_r(d,2;q).
\]
\item Case 2: $\deg(g)\geq 1$. Say $e=\deg(g)$. We still have $\gcd(W_1)=1$, so Theorem~\ref{Thm: urd2} and Proposition~\ref{Prop: H' smaller than H} imply that
\[
|V(W_1)(\fq)|
\leq |V(W_1)|_{\mm}
\leq H'_{r-1}(d-e,2)
\leq H_r(d-e,2;q).
\]
Moreover, 
\[
|V(g)(\fq)|
\leq e_1(e,2)
=eq+1.
\]
Since $\dim(W_1)=r$ and $W_1\subseteq \fq[x_0,x_1,x_2]_{d-e}$, we know that 
\[
r\leq 
\tbinom{d-e+2}{2} \leq
\tbinom{d-1+2}{2},
\]
so $l=1$.
Thus, Lemma \ref{Lem d-c to d} implies that 
\begin{align*}
|V(W)(\fq)|
&\leq |V(W_1)(\fq)|+|V(g)(\fq)|
\leq H_r(d-e,2;q) +eq +1\\
&\leq H_r(d,2;q)+1
= H_r(d,2;q)+\pi_{2-l-1}(q)
=f_r(d,2;q). \qedhere
\end{align*}
\end{enumerate}
\end{proof}

\section{Conjecture of Bogulavsky}

We start by recalling a conjecture of Bogulavsky regarding the degrees in different dimensions of $V(W)$.
In this section, we will again work over a finite field $\fq$.

\begin{conjecture}\cite[Conjecture 2]{boguslavsky1997number}\label{Conj: Bogulavsky}
Suppose we have $m,d\geq 1$ and $1\leq r\leq \tbinom{m+d}{d}$. Denote $\omega_r(d,m)=(\beta_1,\dots,\beta_{m+1})$. Then, given a subspace $W$ of $S_d(m,\fq)$, we have
\[
\Big(\deg_{m-1}(V(W)),
\dots,\deg_0(V(W)),
d-\sum_{i=0}^{m-1} \deg_i(V(W))
\Big) 
\leq_{\text{lex}} 
(\beta_1,\dots,\beta_{m+1}). 
\]
Moreover, equality holds for some $W$.
\end{conjecture}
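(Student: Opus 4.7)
The plan is to prove both parts of Conjecture~\ref{Conj: Bogulavsky} by induction on $m$, refining the point-counting statement of the BDG conjecture to finer control on the individual degrees of components. The base case $m=1$ is elementary: for $W\subseteq \fq[x_0,x_1]_d$ of dimension $r$, writing $W=gW_1$ with $g=\gcd(W)$ and $\gcd(W_1)=1$ yields $\dim(W_1)=r\leq d-\deg(g)+1$, hence $\deg(g)\leq d-r+1$; the tuple $(\deg_0(V(W)),d-\deg_0(V(W)))=(\deg(g),d-\deg(g))$ is thus $\leq_{\text{lex}} (d-r+1,r-1)=w_r(d,1)$, with equality for any $W=gW_1$ with $\deg(g)=d-r+1$.

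For the upper bound at general $m$, decompose $W=gW_1$ with $g=\gcd(W)$ of degree $a_1=\deg_{m-1}(V(W))$, $\gcd(W_1)=1$, $W_1\subseteq S_{d-a_1}(m,\fq)$, and $\dim(W_1)=r$. The bound $r\leq \binom{d-a_1+m}{m}$, combined with the combinatorial characterization of $\beta_1$ (tuples of $\Omega(d,m)$ with first coordinate $\geq c$ number $\binom{d-c+m}{m}$), yields $a_1\leq \beta_1$. If $a_1<\beta_1$ the lex comparison is settled; if $a_1=\beta_1$, reduce to studying $W_1$ with $\gcd(W_1)=1$. The lower-dimensional components of $V(W)$ are those of $V(W_1)$ not absorbed into $V(g)$, so $\deg_{m-i}(V(W))\leq \deg_{m-i}(V(W_1))$ for $i\geq 2$. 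To control the $\gcd=1$ case, take a generic hyperplane $H\cong \PPP^{m-1}$ defined over $\fq$: for such $H$, the restriction $W_1|_H\subseteq S_{d-a_1}(m-1,\fq)$ has dimension $r$, and the $i$-dimensional components of $V(W_1)$ for $i\geq 1$ restrict to $(i-1)$-dimensional components of the same degree in $V(W_1|_H)$. Apply the induction hypothesis to $W_1|_H$ in $\PPP^{m-1}$, and combine with a separate bound on $\deg_0(V(W_1))$ via Conjecture~\ref{Conjecture urdm} to produce the full lex bound on the tuple of $V(W_1)$. Finally, unwind using the recursion $w_r(d-\beta_1,m)=(0,w_{r'}(d-\beta_1,m-1))$ with $r'=r-\binom{d-\beta_1-1+m}{m}$ to match $(\beta_2,\dots,\beta_{m+1})$.

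For the existence of equality, construct $W$ realizing $w_r(d,m)$ by recursively peeling off hyperplanes. If $\beta_1>0$, pick $g$ as a product of $\beta_1$ generic linear forms and recursively build $W_1$ of dimension $r$ in $S_{d-\beta_1}(m,\fq)$ with $\gcd(W_1)=1$ realizing the shifted tuple $(0,\beta_2,\dots,\beta_{m+1})$; then set $W=gW_1$. When $\beta_1=0$ with first nonzero index $l\geq 2$, build $W$ directly by taking a generic linear complete intersection of codimension $l-1$, intersecting with a degree-$\beta_l$ hypersurface in residual variables to produce an $(m-l)$-dimensional component of degree $\beta_l$, and adding further polynomials to cut out lower-dimensional components of the prescribed degrees. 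A dimension count analogous to the proof of Proposition~\ref{Prop: lower bound urdm} then verifies $\dim(W)=r$ and the correct tuple.

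The main obstacle is the inductive step of the upper bound. Ensuring that a generic hyperplane $H$ defined over $\fq$ exists with the required transversality (no component of $V(W_1)$ contained in $H$, no polynomial of $W_1$ divisible by its defining form) is a Bertini-type assertion that can fail for $q$ small relative to $d$, so the conjecture as stated may need a mild lower bound on $q$. More fundamentally, the $0$-dimensional components of $V(W_1)$ are invisible in $V(W_1|_H)$ for generic $H$, so a separate bound on $\deg_0(V(W_1))$ is needed; the natural such bound is via Conjecture~\ref{Conjecture urdm}, which is itself open for $m\geq 3$. Finally, matching the recursive tuples precisely requires careful combinatorial bookkeeping of the lex orders on $\Omega(d,m)$ and $\Omega(d-\beta_1,m-1)$, and care must be taken with scheme-theoretic multiplicities when components of $V(W_1)$ are absorbed into $V(g)$, to ensure that $\deg_i(V(W))$ is interpreted consistently throughout the induction.
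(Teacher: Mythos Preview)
The paper does not prove Conjecture~\ref{Conj: Bogulavsky} in general; it establishes only the case $m=2$. For that case, the argument is quite different from your inductive plan: there is no induction on $m$ and no hyperplane restriction. Instead, the paper invokes two results proved in~\cite{singhal2025conjecturebeelendattaghorpade} (Propositions~\ref{dimension} and~\ref{degree}) which directly bound $\dim(V(W))$ and the degree of its top-dimensional part by $\beta_l$, where $l$ is the first nonzero index of $w_r(d,m)$. For $m=2$ this already settles the lex comparison whenever $\beta_1=0$. In the remaining case $\beta_1>0$, the paper writes $W=gW_1$ with $\deg(g)=\beta_1$; since $\gcd(W_1)=1$ in $\Pt$ forces $V(W_1)$ to be zero-dimensional, Theorem~\ref{Thm: urd2} gives $\deg_0(V(W))\leq |V(W_1)|_{\mm}\leq u_r(d-\beta_1,2)=H'_{r-1}(d-\beta_1,2)=\beta_2$, finishing the lex bound. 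The equality case is not spelled out separately in that section. Your plan, specialized to $m=2$, lands on the same final step, since the hyperplane restriction to $\Po$ carries no information about zero-dimensional loci.

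For $m\geq 3$ your plan has genuine gaps beyond the two you flag. First, the restriction $W_1\to S_{d-\beta_1}(m-1,\fq)$ need not be injective for any $H$: its image has dimension at most $\binom{d-\beta_1+m-1}{m-1}$, which can be strictly smaller than $r$ (e.g.\ $m=3$, $d=3$, $\beta_1=1$, $r\in\{7,\dots,10\}$), so the claim $\dim(W_1|_H)=r$ fails outright. Second, even when it holds, the induction hypothesis compares the restricted tuple to $w_r(d-\beta_1,m-1)$, whereas the tuple you must match is $w_{r'}(d-\beta_1,m-1)=(\beta_2,\dots,\beta_{m+1})$ with $r'=r-\binom{d-\beta_1+m-1}{m}$; your unwind step conflates $r$ and $r'$. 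Third, Conjecture~\ref{Conjecture urdm} bounds $|V(W_1)|$ only when $\dim(V(W_1))=0$, which is automatic in $\Pt$ but not in $\PPP^m$ for $m\geq 3$; when $V(W_1)$ still has positive-dimensional components, you have no ready bound on $\deg_0(V(W_1))$ alone, and an absolute bound on $\deg_0$ does not combine with a lex bound on the higher coordinates to yield the full lex inequality.
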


If we consider the first nonzero index of $\omega_r(d,m)$, that is, $l=\min\{i: \beta_i\neq 0\}$, then Conjecture~\ref{Conj: Bogulavsky} says that $\dim(V(W))\leq m-l$. By Lemma~\ref{Lem: first non-zero index}, this $l$ is determined by the range of $r$
$$\tbinom{m+d}{d}-\tbinom{m+d+1-l}{d} <r \leq \tbinom{m+d}{d}-\tbinom{m+d-l}{d}.$$
Thus, this part of Conjecture~\ref{Conj: Bogulavsky} follows from the following result of \cite{singhal2025conjecturebeelendattaghorpade}.

\begin{proposition}\cite[Proposition 4]{singhal2025conjecturebeelendattaghorpade}\label{dimension}
Suppose that we have $1\leq l\leq m$ and 
\[
\tbinom{m+d}{d}-\tbinom{m+d+1-l}{d} 
<r 
\leq \tbinom{m+d}{d}.
\] 
Given $F_1, \dots, F_r \in S_d(m)$ that are linearly independent, we have
\[
\dim(V(F_1, \dots, F_r)) 
\leq m-l.
\]
\end{proposition}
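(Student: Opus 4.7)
The plan is to prove this by contradiction: assume $\dim V(F_1,\dots,F_r) \geq m-l+1$, and deduce the bound $r \leq \binom{m+d}{d} - \binom{m+d-l+1}{d}$, which contradicts the hypothesis. Set $n = m-l+1$ and $X = V(F_1,\dots,F_r) \subseteq \PPP^m$. Since $\dim X \geq n$, I can pick an irreducible closed subvariety $Y \subseteq X$ of dimension exactly $n$: take an irreducible component of $X$ of maximal dimension, and if that dimension exceeds $n$, intersect it with sufficiently general hyperplanes until the dimension drops to $n$ (Krull's Hauptidealsatz guarantees each cut drops the dimension by exactly one for a generic hyperplane).

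The main step is to bound $\dim I_d(Y) \leq \binom{m+d}{d} - \binom{n+d}{d}$. I would do this by producing a generically finite, dominant linear projection $\pi\colon Y \to \PPP^n$. Since $\dim Y = n$ and the indeterminacy locus of a linear projection $\PPP^m \dashrightarrow \PPP^n$ is an $(m-n-1)$-dimensional linear subspace, a generic choice of center is disjoint from $Y$, so $\pi|_Y$ is a regular morphism, and a further generic choice makes it dominant (equivalently, generically finite, as source and target share the same dimension). Pulling back degree-$d$ forms then gives an injection $\pi^*\colon \kappa[y_0,\dots,y_n]_d \hookrightarrow \kappa[Y]_d$, because a nonzero degree-$d$ form on $\PPP^n$ cannot vanish on the dense image $\pi(Y)$. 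Hence
\[
\dim_{\kappa} \kappa[Y]_d \;\geq\; \tbinom{n+d}{d},
\]
and dually
\[
\dim I_d(Y) \;\leq\; \tbinom{m+d}{d} - \tbinom{n+d}{d} \;=\; \tbinom{m+d}{d} - \tbinom{m+d-l+1}{d}.
\]

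Since $F_1,\dots,F_r$ all vanish on $Y$ and are linearly independent, they sit inside $I_d(Y)$, so $r \leq \dim I_d(Y) \leq \binom{m+d}{d} - \binom{m+d-l+1}{d}$, contradicting the hypothesis $r > \binom{m+d}{d} - \binom{m+d-l+1}{d}$. This completes the argument.

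The main obstacle is the existence of a generic dominant linear projection $Y \to \PPP^n$. This is a standard piece of projective-geometric input, but it is the one spot that needs care: one must argue that in the parameter space of linear projections, both the condition ``center disjoint from $Y$'' and the condition ``restriction to $Y$ is dominant'' are nonempty open. The first is open because the incidence locus between $Y$ and the universal center is a proper closed condition (by a dimension count, since $\dim Y + (m-n-1) = m - 1 < m$). The second follows once the first is secured, since the image $\pi(Y)$ is then an irreducible closed subvariety of $\PPP^n$ of dimension at most $n$, and a generic rotation of coordinates avoids it being contained in any proper closed subvariety. Once this geometric fact is in hand, the rest of the proof is formal.
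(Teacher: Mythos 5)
Your argument is correct. Note, however, that this paper does not actually prove Proposition~\ref{dimension}: it is imported verbatim by citation from \cite[Proposition~4]{singhal2025conjecturebeelendattaghorpade}, so there is no in-document proof to compare against. Judged on its own terms, your proof is sound: the whole content is the Hilbert-function lower bound $\dim_\kappa \kappa[Y]_d\geq \binom{n+d}{d}$ for an $n$-dimensional closed subvariety $Y\subseteq \mathbb{P}^m$, which you obtain by projecting from a generic linear center of dimension $m-n-1$ disjoint from $Y$ (disjointness by the count $n+(m-n-1)=m-1<m$) and observing that the pullback of degree-$d$ forms is injective because the projection is finite, hence surjective onto $\mathbb{P}^n$. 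Two small simplifications are available: once the center misses $Y$, the restricted projection is automatically finite and therefore surjective (image closed of dimension $n$), so no ``further generic choice'' is needed for dominance; and you do not need to cut $X$ down to dimension exactly $n$ or worry about irreducibility of hyperplane sections, since projecting a top-dimensional component of dimension $n'\geq n$ to $\mathbb{P}^{n'}$ gives $\dim I_d(Y)\leq \binom{m+d}{d}-\binom{n'+d}{d}\leq \binom{m+d}{d}-\binom{m+d+1-l}{d}$ by monotonicity of $\binom{k+d}{d}$ in $k$, which already yields the contradiction with $r>\binom{m+d}{d}-\binom{m+d+1-l}{d}$.
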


Moreover, for the same choice of $l$, Conjecture~\ref{Conj: Bogulavsky} says that $\deg_{m-l}(V(W))\leq \beta_l$.
This follows from Corollary~\ref{Cor: l,c} and the following result of \cite{singhal2025conjecturebeelendattaghorpade}.

\begin{proposition}\cite[Proposition 5]{singhal2025conjecturebeelendattaghorpade}\label{degree}
Suppose that we have $1\leq l\leq m$, $1\leq c\leq d$ and
$$\tbinom{m+d}{d}-\tbinom{m+d+1-l}{d}+\tbinom{m+d-l-c}{d-c-1} <r \leq \tbinom{m+d}{d}.$$
Then given $F_1, \dots, F_r \in S_d(m)$ that are linearly independent, we have:
$$\deg_{m-l}(V(F_1, \dots, F_r)) \leq c.$$
\end{proposition}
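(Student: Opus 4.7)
This proposition refines Proposition~\ref{dimension} by upgrading a dimension bound on $V(F_1,\dots,F_r)$ to a degree bound on its top-dimensional part. I would argue by contradiction: assume $\deg_{m-l}(V(F_1,\dots,F_r)) \geq c+1$, let $Z$ be the union of the $(m-l)$-dimensional irreducible components of $V(F_1,\dots,F_r)$, and set $e=\deg Z \geq c+1$. Every $F_i$ vanishes on $Z$, so the space $W=\langle F_1,\dots,F_r\rangle$ lies in $I_d(Z)$, giving $r\leq \dim I_d(Z)$. The statement thus reduces to the uniform upper bound
\[\dim I_d(Z) \leq \tbinom{m+d}{d}-\tbinom{m+d+1-l}{d}+\tbinom{m+d-l-c}{d-c-1}\]
for any equidimensional $Z\subseteq \Pm$ of dimension $m-l$ and degree at least $c+1$; together with the hypothesis on $r$, this yields a contradiction.

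I would establish the upper bound by induction on $l$, treating $l=1$ as the base case and viewing Proposition~\ref{dimension} as the endpoint $c=d$. For $l=1$, $Z$ is a hypersurface of degree $e\geq c+1$, defined by a single form $g$, so $I_d(Z)=g\cdot\kappa[x_0,\dots,x_m]_{d-e}$ has dimension $\binom{m+d-e}{d-e}\leq \binom{m+d-c-1}{d-c-1}$; the first two binomials on the right of the claimed bound cancel, so this matches exactly. For the inductive step I would intersect $Z$ with a generic $l$-dimensional linear subspace $\Lambda\subseteq \Pm$. By a Bertini-type genericity argument, $Z\cap\Lambda$ is $0$-dimensional of length $e$, in general linear position inside $\Lambda\cong\mathbb{P}^l$. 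The restriction map $\rho:S_d(m,\kappa)\to S_d(\Lambda)$ has kernel of dimension $\binom{m+d}{d}-\binom{l+d}{d}$, and $\rho(I_d(Z))\subseteq I_d(Z\cap\Lambda)$, giving
\[\dim I_d(Z)\leq \tbinom{m+d}{d}-\tbinom{l+d}{d}+\dim I_d(Z\cap\Lambda),\]
and the Hilbert function of $e$ points in general position in $\mathbb{P}^l$ controls the last term.

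The main obstacle is matching the exact combinatorial expression $\binom{m+d-l-c}{d-c-1}$ rather than a mere asymptotic estimate: this requires knowing that the $e$ points of $Z\cap\Lambda$ impose the maximum $\min(e,\binom{l+d}{d})$ conditions on degree-$d$ forms, which is delicate when $d$ is small relative to $e$. A secondary induction on $c$, or a bootstrap via Proposition~\ref{dimension} applied to each slice, should handle these edge cases, and the equidimensionality of $Z$ allows reducible $Z$ to be treated by summing over irreducible components. As a backup I would iterate hyperplane slicing: take a generic hyperplane $H\subseteq \Pm$, replace $Z$ by $Z\cap H\subseteq \mathbb{P}^{m-1}$ (still equidimensional of dimension $(m-1)-l$ and degree $e$), and relate $I_d(Z)$ to $I_d(Z\cap H)$ and $I_{d-1}(Z)$ through the usual short exact sequence; after $l-1$ such slicings one arrives at a curve in $\mathbb{P}^{m-l+1}$ to which the $l=1$ base case applies, and unwinding the slicings yields the required binomial identity.
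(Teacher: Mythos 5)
First, note that this paper does not prove Proposition~\ref{degree} at all: it is imported verbatim from the companion paper \cite[Proposition 5]{singhal2025conjecturebeelendattaghorpade}, so there is no internal proof to compare against. Your reduction is sound up to the point where everything hinges on the uniform bound $\dim I_d(Z)\leq \tbinom{m+d}{d}-\tbinom{m+d+1-l}{d}+\tbinom{m+d-l-c}{d-c-1}$ for $Z$ equidimensional of dimension $m-l$ and degree $e\geq c+1$, and your base case $l=1$ is correct. The gap is in the inductive step. The inequality $\dim I_d(Z)\leq \bigl(\tbinom{m+d}{d}-\tbinom{l+d}{d}\bigr)+\dim I_d(Z\cap\Lambda)$ is true but far too lossy: the kernel of the restriction $I_d(Z)\to I_d(Z\cap\Lambda)$ is $I_d(Z)\cap I_d(\Lambda)=I_d(Z\cup\Lambda)$, which is much smaller than $I_d(\Lambda)$, and replacing it by $\dim I_d(\Lambda)$ throws away almost all of the conditions that $Z$ imposes away from $\Lambda$. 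Concretely, take $m=3$, $l=2$, $d=3$, $c=1$ and let $Z$ be a plane conic in $\PPP^3$ (so $e=2\geq c+1$). The target bound is $\dim I_3(Z)\leq 20-10+3=13$, i.e.\ $Z$ must impose at least $7$ conditions on cubics (and it imposes exactly $7$, so the bound is sharp here). Your slicing gives $Z\cap\Lambda$ equal to $2$ points in $\PPP^2$, hence $\dim I_3(Z)\leq (20-10)+(10-2)=18$, i.e.\ only $2$ conditions --- nowhere near enough. No choice of generic $\Lambda$ repairs this, because the estimate discards the ideal of $\Lambda$ wholesale.

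Your ``backup'' --- iterated generic hyperplane sections with the exact sequence $0\to I_{d-1}(Z)\xrightarrow{\cdot h} I_d(Z)\to I_d(Z\cap H)$ --- is the structurally correct route (it is the standard way to prove Hilbert-function lower bounds of this shape), but as written it is only a gesture, and the bookkeeping is off: slicing preserves the codimension $l$, not the dimension, so after $l-1$ slicings you have a variety of dimension $m-2l+1$ in $\PPP^{m-l+1}$, which is a curve only when $m=2l$ and is a hypersurface (the case your base case handles) only when $l=1$. To terminate you must slice $m-l$ times down to $e$ points in $\PPP^{l}$, and then the entire difficulty of the proposition is concentrated in the unaddressed endpoint: a lower bound on the number of conditions that these $e$ points impose in each degree $d-j$ appearing when you unwind the recursion. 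These points are not in general position (they are a linear section of a possibly reducible $Z$, and uniform-position arguments are unavailable in positive characteristic), so this step needs a genuine argument, not an appeal to genericity. As it stands the proposal does not constitute a proof.
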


Now, let us restrict ourselves to the case $m=2$ and suppose $\omega_r(d,2)=(\beta_1,\beta_2,\beta_3)$. If $\beta_1=0$, then the previous two propositions prove Conjecture~\ref{Conj: Bogulavsky} for this $r$.
On the other hand, if $\beta_1>0$, then they say that $\deg_1(V(W))\leq \beta_1$. It remains to be shown that if $\deg_1(V(W))= \beta_1$, then $\deg_0(V(W))\leq \beta_2$. We prove this in the next lemma.

\begin{lemma}
Suppose $\omega_r(d,2)=(\beta_1,\beta_2,\beta_3)$ with $\beta_1>0$.
For subspaces $W$ of $S_d(2,\fq)$ of dimension $r$, if $\deg_1(V(W))= \beta_1$, then $\deg_0(V(W))\leq \beta_2$.
\end{lemma}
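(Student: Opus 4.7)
The plan is to reduce the statement to the zero-dimensional estimate of Theorem~\ref{Thm: urd2}. Setting $g=\gcd(W)$ and writing $W=gW_1$, one obtains an $r$-dimensional subspace $W_1\subseteq \fq[x_0,x_1,x_2]_{d-\deg(g)}$ with $\gcd(W_1)=1$. Because $V(g)$ is precisely the union of one-dimensional components of $V(W)$, the hypothesis $\deg_1(V(W))=\beta_1$ forces $\deg(g)=\beta_1$, so $W_1$ sits in degree $d-\beta_1$. Every zero-dimensional component of $V(W)$ is an isolated point of $V(W)$, and hence lies in $V(W_1)$, so
\[\deg_0(V(W))\;\leq\;|V(W_1)(\fqb)|_{\mm}.\]

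Next I would control $|V(W_1)|_{\mm}$. The case $r=1$ is trivial, since then $\beta_1=d$, $\beta_2=0$, and $W_1$ is spanned by a nonzero constant. For $r\geq 2$, Lemma~\ref{Lem: 2 coprime poly} produces two coprime polynomials in $W_1$, so $V(W_1)$ is zero-dimensional; applying Theorem~\ref{Thm: urd2} over $\kappa=\fqb$ yields
\[|V(W_1)|_{\mm}\;\leq\;u_r(d-\beta_1,2)\;=\;H'_{r-1}(d-\beta_1,2).\]
It therefore suffices to verify the identity $H'_{r-1}(d-\beta_1,2)=\beta_2$.

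For this final identification I would use the order-preserving bijection
\[\phi:\Omega(d-\beta_1,2)\to\{\alpha\in\Omega(d,2):\alpha_1\geq\beta_1\},\qquad (\alpha_1,\alpha_2,\alpha_3)\mapsto(\alpha_1+\beta_1,\alpha_2,\alpha_3).\]
Since every tuple in $\Omega(d,2)$ that is lex-$\geq(\beta_1,\beta_2,\beta_3)$ automatically has first coordinate $\geq\beta_1$, the map $\phi$ identifies the top $r$ tuples on each side, so $w_r(d-\beta_1,2)=(0,\beta_2,\beta_3)$. Now split on $\beta_3$. If $\beta_3>0$, the tuple $(0,\beta_2,\beta_3)$ differs from $(0,d-\beta_1,0)$ and survives in $\Omega'(d-\beta_1,2)$; exactly one lex-larger element was removed, so $w'_{r-1}(d-\beta_1,2)=(0,\beta_2,\beta_3)$ and $H'_{r-1}(d-\beta_1,2)=\beta_2$. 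If $\beta_3=0$, then $\beta_2=d-\beta_1$ and $w_r(d-\beta_1,2)=(0,d-\beta_1,0)$ is itself the deleted tuple; a direct count shows $r-1=\binom{d-\beta_1+1}{2}$ equals the number of tuples in $\Omega(d-\beta_1,2)$ with first coordinate $\geq 1$, so $w'_{r-1}(d-\beta_1,2)=(1,0,d-\beta_1-1)$ and again $H'_{r-1}(d-\beta_1,2)=d-\beta_1=\beta_2$.

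The gcd decomposition and the application of Theorem~\ref{Thm: urd2} are routine; the only delicate point is the combinatorial identification $H'_{r-1}(d-\beta_1,2)=\beta_2$, specifically the boundary case $\beta_3=0$, in which the element dropped from $\Omega(d-\beta_1,2)$ to form $\Omega'(d-\beta_1,2)$ coincides with $w_r(d-\beta_1,2)$ and the $(r-1)$-st largest tuple has to be recomputed. This is the main obstacle, but it is a bounded case-check rather than a genuine difficulty.
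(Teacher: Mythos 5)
Your proof is correct and follows essentially the same route as the paper's: factor out $g=\gcd(W)$ of degree $\beta_1$, bound $\deg_0(V(W))$ by $u_r(d-\beta_1,2)=H'_{r-1}(d-\beta_1,2)$ via Theorem~\ref{Thm: urd2}, and identify that quantity with $\beta_2$ by a rank computation in $\Omega(d-\beta_1,2)$. Your explicit treatment of the boundary case $\beta_3=0$ --- where $(0,\beta_2,\beta_3)=(0,d-\beta_1,0)$ is exactly the tuple deleted from $\Omega'(d-\beta_1,2)$, so that $w'_{r-1}(d-\beta_1,2)=(1,0,d-\beta_1-1)$ rather than $(0,\beta_2,0)$ --- is in fact more careful than the paper's direct appeal to Lemma~\ref{Lem: w and w'}, which does not literally apply in that case; the resulting value $H'_{r-1}(d-\beta_1,2)=d-\beta_1=\beta_2$ is nevertheless the same, as you verify.
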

\begin{proof}
Consider a subspace $W$ of $S_d(2,\fq)$ of dimension $r$ with $\deg_1(V(W))= \beta_1$.
Denote $g=\gcd(W)$. Then we have $\deg(g)=\deg_1(V(W))=\beta_1$. Suppose $W=g W_1$, so $W_1$ consists of polynomials of degree $d-\beta_1$ and $\gcd(W_1)=1$. Note that
\[\deg_0(V(W)) =\deg(V(W_1)) =|V(W_1)|_{\mm}\leq u_{r} (d-\beta_1,2).\]
Therefore, Theorem~\ref{Thm: urd2} implies that $u_{r} (d-\beta_1,2)=H'_{r-1}(d-\beta_1,2)$. By Lemma~\ref{r as function of alpha}, we see that
\[r=1+\tbinom{1+d-\beta_1}{2} +\tbinom{d-\beta_1-\beta_2}{1}.\]
Consider the $s$ for which $\omega_s(d-\beta_1,2)=(0,\beta_2,\beta_3)$. By Lemma~\ref{r as function of alpha}, we see that
\[s=1+\tbinom{1+(d-\beta_1) -0}{2} +\tbinom{(d-\beta_1)-0-\beta_2}{1}.\]
This means that $s=r$. By Lemma~\ref{Lem: w and w'}, we see that $\omega'_{r-1}(d-\beta_1,2)=(0,\beta_2,\beta_3)$, that is, $H'_{r-1}(d-\beta_1,2)=\beta_2$. This completes the proof.
\end{proof}

The discussion in this section proves the following.

\begin{theorem}
Conjecture~\ref{Conj: Bogulavsky} is true when $m=2$.
\end{theorem}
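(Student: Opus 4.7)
The plan is to split on $l = \min\{i : \beta_i \neq 0\}$ in $w_r(d,2) = (\beta_1, \beta_2, \beta_3)$ and compare the required tuple coordinate-by-coordinate in lex order. The tools I would combine are Proposition~\ref{dimension} and Proposition~\ref{degree} from \cite{singhal2025conjecturebeelendattaghorpade}, which respectively bound the dimension of $V(W)$ and the degree of its top-dimensional part, together with the $m = 2$ case of the zero-dimensional conjecture established in this paper as Theorem~\ref{Thm: urd2}.

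First I would dispatch the case $\beta_1 = 0$. Here Lemma~\ref{Lem: first non-zero index} gives $l \geq 2$, so Proposition~\ref{dimension} yields $\dim V(W) \leq m - l \leq 0$, hence $\deg_1(V(W)) = 0 = \beta_1$. If $l = 3$, i.e.\ $r = \binom{d+2}{2}$, then $W$ is the whole space and $V(W) = \emptyset$. Otherwise $l = 2$ and $\beta_l = \beta_2$; Proposition~\ref{degree} combined with Corollary~\ref{Cor: l,c} then gives $\deg_0(V(W)) \leq \beta_2$, completing the lex comparison in this case.

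The main case is $\beta_1 > 0$, where $l = 1$ and Proposition~\ref{degree} (with $c = \beta_1$) gives $\deg_1(V(W)) \leq \beta_1$. A strict inequality makes the tuple automatically lex-smaller than $(\beta_1, \beta_2, \beta_3)$. In the boundary case $\deg_1(V(W)) = \beta_1$ I would set $g = \gcd(W)$, which must then have degree exactly $\beta_1$, factor $W = g W_1$ with $\gcd(W_1) = 1$ and $\dim W_1 = r$, and invoke Theorem~\ref{Thm: urd2} to obtain
\[\deg_0(V(W)) = |V(W_1)|_{\mm} \leq u_r(d - \beta_1, 2) = H'_{r-1}(d - \beta_1, 2).\]
A direct Lemma~\ref{r as function of alpha} computation identifies the $s$ with $w_s(d - \beta_1, 2) = (0, \beta_2, \beta_3)$ to be $s = r$, and Lemma~\ref{Lem: w and w'} then promotes this to $w'_{r-1}(d - \beta_1, 2) = (0, \beta_2, \beta_3)$, so $H'_{r-1}(d - \beta_1, 2) = \beta_2$ as desired.

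For the ``moreover'' clause I would glue the two lower-bound constructions: take $g$ a product of $\beta_1$ distinct $\fq$-linear forms (giving a one-dimensional piece of total degree $\beta_1$), and let $W_1 \subseteq S_{d-\beta_1}(2, \fq)$ be the subspace built in Proposition~\ref{Prop: lower bound urdm}, which saturates $u_r(d-\beta_1,2) = \beta_2$; then $W = gW_1$ realizes the tuple $(\beta_1, \beta_2, \beta_3)$. The anticipated obstacle is the equality step in the third paragraph: one must verify that factoring out the $\gcd$ converts the problem on $W$ into the exact $u_r$-problem on $W_1$ at the shifted parameters $(d-\beta_1, 2)$, and that the combinatorial reindexing via Lemma~\ref{Lem: w and w'} lands precisely on $(0,\beta_2,\beta_3) \in \Omega'(d-\beta_1,2)$ so that the bound evaluates to $\beta_2$ rather than something larger.
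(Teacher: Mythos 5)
Your proposal is correct and follows essentially the same route as the paper: the case split on $l$, the use of Propositions~\ref{dimension} and~\ref{degree} for the dimension and top-degree bounds, and in the boundary case $\deg_1(V(W))=\beta_1$ the factorization $W=gW_1$ followed by Theorem~\ref{Thm: urd2} and the reindexing via Lemma~\ref{r as function of alpha} and Lemma~\ref{Lem: w and w'} to get $H'_{r-1}(d-\beta_1,2)=\beta_2$. Your explicit treatment of the ``equality holds for some $W$'' clause by gluing the lower-bound constructions is a reasonable addition that the paper leaves implicit.
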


\section*{Acknowledgments}
We thank Sudhir Ghorpade for introducing us to this problem. We thank Nathan Kaplan for many helpful discussions about the problem.

\appendix

\section{Technical Lemmas}\label{Appendix}

In this appendix, our goal is to prove Proposition~\ref{Prop: H' smaller than H}.

\begin{lemma}\label{Lem: diff ws}
Suppose $\omega_{s}(d,m)=(\beta_1,\dots,\beta_{m+1})$ and
\[k=\max\{1\leq k\leq m: \beta_k\neq 0\}.\]
Denote $\sigma=\sum_{i=1}^m\beta_i =d-\beta_{m+1}$.
Then for $1\leq j\leq m+1-k$, we have
\[H_s(d,m;d+1)-H_{s+j}(d,m;d+1)=(\sigma+1) (d+1)^{m-k-1} -\lfloor(d+1)^{m-k-j}\rfloor.\]
\end{lemma}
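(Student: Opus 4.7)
The plan is to determine $w_{s+j}(d,m)$ explicitly for each $j$ in the range $1 \leq j \leq m+1-k$, and then substitute into the defining formula of $H$. By the hypothesis on $k$, we have $\beta_{k+1} = \cdots = \beta_m = 0$, so $w_s = (\beta_1, \ldots, \beta_{k-1}, \beta_k, 0, \ldots, 0, d-\sigma)$, with the only nonzero entries among positions $k+1, \ldots, m+1$ being $\beta_{m+1} = d-\sigma$.

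The first step is to identify, for each $1 \leq j \leq m+1-k$, the explicit form of $w_{s+j}$. Going one step lex-down from $w_s$, the first coordinate that can decrease is the last nonzero one among the first $m$ positions, namely position $k$; so the prefix $(\beta_1, \ldots, \beta_{k-1})$ is unchanged, position $k$ decreases to $\beta_k - 1$, and the remaining $m+1-k$ coordinates are filled with the lex-largest tuple summing to $d - \sigma + 1$, namely $(d - \sigma + 1, 0, \ldots, 0)$. For subsequent $j$ up to $m+1-k$, the prefix $(\beta_1, \ldots, \beta_{k-1}, \beta_k - 1)$ continues to be shared, and one checks by direct enumeration that the $j$-th lex-largest suffix in $\N^{m+1-k}$ summing to $d-\sigma+1$ is $(d-\sigma,0,\ldots,0,1,0,\ldots,0)$ with the $1$ in position $j$ of the suffix (equivalently, position $k+j$ of the full tuple).

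With these tuples in hand, the second step is a direct substitution into $H_{s+j}(d,m;q) = \sum_i (w_{s+j})_i\, q^{m-i}$ at $q = d+1$. Subtracting from $H_s(d,m;d+1) = \sum_{i=1}^{k-1}\beta_i (d+1)^{m-i} + \beta_k (d+1)^{m-k}$, the three regimes $j=1$, $2 \leq j \leq m-k$, and $j = m+1-k$ yield respectively
\[
(d+1)^{m-k} - (d-\sigma+1)(d+1)^{m-k-1},\quad (d+1)^{m-k} - (d-\sigma)(d+1)^{m-k-1} - (d+1)^{m-k-j},\quad (d+1)^{m-k} - (d-\sigma)(d+1)^{m-k-1}.
\]
Using the identity $(d+1) - (d-\sigma) = \sigma+1$ and $(d+1) - (d-\sigma+1) = \sigma$, each of these simplifies to $(\sigma+1)(d+1)^{m-k-1} - \lfloor (d+1)^{m-k-j}\rfloor$, once we observe that the subtracted term is $(d+1)^{m-k-1}$ for $j=1$, equals $(d+1)^{m-k-j}$ for middle $j$, and equals $0 = \lfloor (d+1)^{-1}\rfloor$ for $j = m+1-k$.

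The main obstacle is the careful identification of $w_{s+j}$, and in particular the boundary case $j = m+1-k$, where the appended $1$ lands in position $m+1$ and thus contributes nothing to $H$. This is precisely the reason the formula uses $\lfloor (d+1)^{m-k-j}\rfloor$ rather than $(d+1)^{m-k-j}$: the floor provides a uniform way to collapse the three case distinctions into a single expression. Once the suffix structure is established, the arithmetic is routine.
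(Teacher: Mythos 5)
Your proposal is correct and follows essentially the same route as the paper: the paper's proof likewise consists of writing down $w_{s+j}(d,m)=(\beta_1,\dots,\beta_{k-1},\beta_k-1,d-\sigma,0,\dots,0,1,0,\dots,0)$ (with the $1$ in position $k+j$, merging with the $d-\sigma$ when $j=1$ and falling off the end when $j=m+1-k$) and substituting into $H$ at $q=d+1$, with the floor absorbing the boundary case exactly as you describe. The only difference is that you justify the identification of $w_{s+j}$ via the lex-ordering of suffixes, which the paper simply asserts; your case analysis at $j=1$ and $j=m+1-k$ matches what the paper's single displayed formula implicitly encodes.
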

\begin{proof}
Note that $\omega_{s+j}(d,m)=(\beta_1,\dots,\beta_{k-1},\beta_k-1,d-\sigma,0,\dots,0,1,0,\dots,0)$.
Thus,
\[H_s(d,m;d+1)=\sum_{i=1}^{k}\beta_i (d+1)^{m-i},\]
\[H_{s+j}(d,m;d+1)=\sum_{i=1}^{k}\beta_i (d+1)^{m-i} -(d+1)^{m-k} +(d-\sigma)(d+1)^{m-k-1}+\lfloor(d+1)^{m-k-j}\rfloor.\]
The result follows.
\end{proof}

\begin{lemma}\label{lem:}
Given $a,b\geq 1$, we have
\[(d+1)^{a-1}+(d+1)^{b-1}\leq (d+1)^{a+b-1}.\]
\end{lemma}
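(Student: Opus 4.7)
The plan is to reduce the claim to a single elementary inequality via symmetry. Since the statement is symmetric in $a$ and $b$, I would assume without loss of generality that $a\leq b$. Then the two terms on the left are ordered, with $(d+1)^{a-1}\leq (d+1)^{b-1}$, so the left side is at most $2(d+1)^{b-1}$. Dividing the desired inequality through by $(d+1)^{b-1}$, it suffices to prove $2\leq (d+1)^{a}$.

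The remaining inequality holds for any $a\geq 1$ provided $d\geq 1$, because then $(d+1)^a\geq d+1\geq 2$. Throughout the paper $d$ is a positive integer (a degree), so this hypothesis is implicit and there is nothing to worry about. Putting the chain together yields
\[(d+1)^{a-1}+(d+1)^{b-1}\leq 2(d+1)^{b-1}\leq (d+1)^a(d+1)^{b-1}=(d+1)^{a+b-1},\]
as desired.

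The main (and essentially only) obstacle is trivial: one must remember to use $d\geq 1$, since for $d=0$ the claim would fail (both sides would equal $1+1=2$ versus $1$). Given the paper's standing conventions this is automatic, but it is the one hypothesis that needs to be flagged explicitly. No induction, Cayley--Bacharach machinery, or combinatorial identity is needed here; the lemma is a purely arithmetic auxiliary whose purpose is to feed into the proof of Proposition~\ref{Prop: H' smaller than H} in the appendix.
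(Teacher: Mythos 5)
Your proof is correct and follows essentially the same route as the paper: assume $a\leq b$, bound the left side by $2(d+1)^{b-1}$, and use $d\geq 1$ together with $a\geq 1$ to absorb the factor of $2$ into a power of $d+1$. Your remark that $d\geq 1$ is genuinely needed (the claim fails for $d=0$) is a fair observation, and the paper's own chain $2(d+1)^{b-1}\leq (d+1)^b\leq (d+1)^{a+b-1}$ uses the same facts in two steps where you use one.
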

\begin{proof}
Assume $a\leq b$. Then we have
\[(d+1)^{a-1}+(d+1)^{b-1}\leq 2 (d+1)^{b-1} \leq (d+1)^b \leq (d+1)^{a+b-1}.\qedhere\]
\end{proof}

\begin{lemma}\label{lem2:}
Given $1\leq c\leq a,b$, we have
\[(d+1)^{a-1}+(d+1)^{b-1}-(d+1)^{c-1}\leq (d+1)^{a+b-c-1}.\]
\end{lemma}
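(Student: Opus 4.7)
The plan is to reduce the inequality to a clean algebraic identity after a substitution. Set $x = d+1$, so $x \geq 2$ (and in particular $x \geq 1$ is all we need). The desired inequality becomes
\[x^{a-1} + x^{b-1} - x^{c-1} \leq x^{a+b-c-1}.\]
Since $c \leq a, b$ and $c \geq 1$, I would factor out the common $x^{c-1}$ from every term: dividing through, this is equivalent to
\[x^{a-c} + x^{b-c} - 1 \leq x^{(a-c)+(b-c)}.\]

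Now I would introduce the nonnegative integers $A = a-c$ and $B = b-c$, so the goal reduces to proving $x^A + x^B - 1 \leq x^{A+B}$ for $A, B \geq 0$ and $x \geq 1$. The key step is to observe the factorization
\[x^{A+B} - x^A - x^B + 1 = (x^A - 1)(x^B - 1).\]
Since $x \geq 1$, both factors on the right are nonnegative, so the right-hand side is $\geq 0$. Rearranging gives exactly $x^A + x^B - 1 \leq x^{A+B}$, and multiplying back by $x^{c-1}$ recovers the statement of the lemma.

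The proof is essentially immediate once the substitution $A = a-c$, $B = b-c$ is made; there is no real obstacle, and the argument works uniformly across all the boundary cases (for instance $A = 0$ or $B = 0$ gives equality, matching the case $a = c$ or $b = c$ in the original statement). This makes the lemma a mild strengthening of Lemma~\ref{lem:}, which corresponds to the special case $c = 1$.
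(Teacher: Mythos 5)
Your proof is correct. After dividing by $(d+1)^{c-1}$ (legitimate since $c\geq 1$ and all exponents involved stay nonnegative because $c\leq a,b$), the inequality becomes $x^A+x^B-1\leq x^{A+B}$ with $A=a-c\geq 0$, $B=b-c\geq 0$, and this is exactly the statement that $(x^A-1)(x^B-1)\geq 0$, which holds for $x=d+1\geq 1$. The paper instead argues by assuming $a\leq b$, disposing of the boundary case $c=a$ (where equality holds), and for $c\leq a-1$ simply discarding the subtracted term and using the crude chain $(d+1)^{a-1}+(d+1)^{b-1}\leq 2(d+1)^{b-1}\leq(d+1)^{b}\leq(d+1)^{a+b-c-1}$. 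Your factorization is more uniform: it needs no case split, handles all boundary cases (including $c=a$ or $c=b$) in one stroke, and only requires $d+1\geq 1$ rather than $d+1\geq 2$; it also makes transparent that the inequality is tight exactly when $A=0$ or $B=0$. The paper's argument is marginally shorter on the page but logically coarser. One small caveat on your closing remark: Lemma~\ref{lem:} is not literally the $c=1$ instance of this lemma --- the $c=1$ instance reads $(d+1)^{a-1}+(d+1)^{b-1}-1\leq(d+1)^{a+b-2}$, which is in fact \emph{stronger} than Lemma~\ref{lem:} (it implies it since $(d+1)^{a+b-2}+1\leq(d+1)^{a+b-1}$). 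This does not affect the validity of your proof of the stated lemma.
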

\begin{proof}
Assume $a\leq b$. If $c=a$, then equality holds so assume $c\leq a-1$.
Then we have
\[(d+1)^{a-1}+(d+1)^{b-1}\leq 2 (d+1)^{b-1} \leq (d+1)^b \leq (d+1)^{a+b-c-1}.\qedhere\]
\end{proof}

\begin{lemma}
Suppose $\omega_s(d,m)=(\beta_1,\dots,\beta_{m+1})$ and $l$ is the smallest index for which $\beta_l\neq 0$. Assuming $l<m$, we have
\[H_s(d,m;d+1)-H_{s+(m-l)}(d,m;d+1)\leq (\beta_{l}+1) (d+1)^{m-l-1}-1.\]
\end{lemma}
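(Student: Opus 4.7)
My plan is to proceed by strong induction on $m$. The base case $m=2$ forces $l=1$ (since $l<m$), and can be checked directly from Lemma~\ref{Lem: diff ws}: when $k=1$ the formula yields $H_s - H_{s+1} = \beta_1$ (equality with the bound), and when $k=2$ it yields $H_s - H_{s+1} = 1 \leq \beta_1$.

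For the inductive step with $m \geq 3$, I would split on whether $l \geq 2$ or $l = 1$. If $l \geq 2$, then $\beta_1 = 0$, and the map dropping the first coordinate defines an order-preserving bijection from $\{w \in \Omega(d,m) : \beta_1(w) = 0\}$ onto $\Omega(d,m-1)$ that also preserves the quantity $H(\,\cdot\,; d+1)$. Applying the inductive hypothesis with parameters $m' = m-1$ and $l' = l-1$ (noting $m' - l' = m - l$) yields the same bound $(\beta_l+1)(d+1)^{m-l-1}-1$.

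If $l = 1$, let $k$ be the last nonzero index of $w_s$ in positions $1, \ldots, m$. For $k \in \{1,2\}$ the inequality $j = m-1 \leq m+1-k$ holds, so Lemma~\ref{Lem: diff ws} applies directly with $j = m-1$: for $k=1$ we get equality, and for $k=2$ the drop is $(\sigma+1)(d+1)^{m-3} \leq (d+1)^{m-2} \leq (\beta_1+1)(d+1)^{m-2}-1$ since $\beta_1 \geq 1$. For $k \geq 3$, I would apply Lemma~\ref{Lem: diff ws} with the maximal valid choice $j_0 = m+1-k$ to move to an intermediate tuple $\widetilde{w} = w_{s+(m+1-k)}$, contributing an $H$-drop of $(\sigma+1)(d+1)^{m-k-1}$, and then iterate the same procedure on $\widetilde{w}$ to cover the remaining $k-2$ successor steps. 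Each iteration contributes a term of the form $(\widetilde{\sigma}+1)(d+1)^{m-\widetilde{k}-1}$, and the accumulated sum should then be collapsed using Lemmas~\ref{lem:} and~\ref{lem2:}.

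The hard part will be the subcase $l=1$ with $k \geq 3$: the outer induction on $m$ does not help here, and under iterated application of Lemma~\ref{Lem: diff ws} the last-nonzero index $\widetilde{k}$ of successive intermediate tuples tends to grow rather than shrink, so careful bookkeeping is needed to ensure the cumulative $H$-drop is absorbed into the target bound $(\beta_1+1)(d+1)^{m-2}-1$. The form of Lemma~\ref{lem2:}, namely $(d+1)^{a-1}+(d+1)^{b-1}-(d+1)^{c-1} \leq (d+1)^{a+b-c-1}$, is precisely the algebraic identity needed to combine the successive contributions into a single clean power-of-$(d+1)$ expression.
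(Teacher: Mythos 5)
Your overall strategy is sound and, in the case $l=1$, is essentially the paper's own: march from $w_s$ to $w_{s+(m-l)}$ by greedily taking the largest step $j=m+1-k_i$ permitted by Lemma~\ref{Lem: diff ws} at each intermediate tuple, then collapse the accumulated drops with Lemmas~\ref{lem:} and~\ref{lem2:}. Your reduction of $l\geq 2$ to dimension $m-1$ by deleting the leading zero coordinate is a valid extra observation (the $\beta_1=0$ tuples form a final segment of the lexicographic order, and the deletion map preserves successors and $H(\cdot\,;d+1)$); it lets you assume $l=1$, which the paper does not bother to do. The cases $k\in\{1,2\}$ are handled correctly.

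However, the subcase you yourself flag as ``the hard part'' --- $l=1$ and $k\geq 3$, i.e.\ the iteration with at least two steps --- is exactly where the content of the lemma lies, and it is not carried out; what you defer as ``bookkeeping'' is the proof. Concretely, two things are missing. First, describing each iteration's contribution as $(\widetilde{\sigma}+1)(d+1)^{m-\widetilde{k}-1}$ is only correct for a full step $\delta_i=u_i:=m+1-k_i$; the final step is in general partial ($\delta_t<u_t$), and then Lemma~\ref{Lem: diff ws} contributes an additional $-\lfloor (d+1)^{u_t-1-\delta_t}\rfloor$. That negative term cannot be dropped: it is precisely the $-(d+1)^{c-1}$ input that Lemma~\ref{lem2:} needs, and in the tight case it is what produces the $-1$ in the target bound. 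Second, the collapse requires a terminal case split you have not made: (i) if the last step is full, then $\sum_i(d+1)^{\delta_i-1}\leq (d+1)^{m-2}\leq (\beta_1+1)(d+1)^{m-2}-1$ suffices by Lemma~\ref{lem:}; (ii) if the last step is partial with $k_t\geq 2$, then $u_t\leq m-1$ and Lemma~\ref{lem2:} applied with $(a,b,c)=(m-1-\delta_t,\;u_t,\;u_t-\delta_t)$ closes the estimate (note $c\leq a$ is exactly $k_t\geq 2$); (iii) a partial last step with $k_t=1$ forces $u_t=m>m-1$ and hence $t=1$, so it only occurs in your already-settled case $k=1$. One must also verify that $k_i=1$ cannot occur at a non-final step (since $u_i=m$ would exceed the total budget $m-1$), so that case (ii) is always applicable. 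Until these estimates are written out, the claimed bound does not follow from what you have; with them, your plan succeeds and coincides with the paper's argument.
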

\begin{proof}
We will break the difference $H_s(d,m;d+1)-H_{s+(m-l)}(d,m;d+1)$ into parts $\sum_i H_{s_{i-1}}(d,m;d+1)-H_{s_i}(d,m;d+1)$, such that each part $H_{s_{i-1}}(d,m;d+1)-H_{s_i}(d,m;d+1)$ can be obtained from Lemma~\ref{Lem: diff ws}.
For this we will construct a sequence $s=s_0<s_1<\dots<s_t=s+(m-l)$. We will denote
\begin{align*}
\delta_i&= s_i-s_{i-1},\\
\Delta_i&= H_{s_{i-1}}(d,m;d+1)-H_{s_i}(d,m;d+1),\\
\omega_{s_{i-1}}(d,m)&= (\gamma_{i-1,1},\dots,\gamma_{i-1,m+1}),\\
k_i &= \max\{1\leq k\leq m\mid \gamma_{i-1,k}\neq 0\},\\
\sigma_i &= \sum_{j=1}^{m} \gamma_{i-1,j}= d-\gamma_{i-1,m+1}.
\end{align*}
In order to apply Lemma~\ref{Lem: diff ws}, we need $\delta_i\leq m+1-k_i$. Denote
\[u_i=m+1-k_i.\]

We start with $s_0=s$. Suppose $s_0,\dots,s_{i-1}$ have been constructed. Using these we can compute $k_i$, $\sigma_i$ and $u_i$. Take
\[s_i=s_{i-1}+\min\big(u_i, s_0+(m-l)-s_{i-1}\big).\]
Thus, $\delta_i\leq u_i$. This will terminate at some $t$ with $s_t=s_0+(m-l)$.

Now, Lemma~\ref{Lem: diff ws} says that for $1\leq i\leq t$ we have
\begin{align*}
\Delta_i= \begin{cases}
    (\sigma_i+1)(d+1)^{u_i-2} -(d+1)^{u_i-1-\delta_i} &\text{ if } \delta_i<u_i\\
    (\sigma_i+1)(d+1)^{u_i-2} &\text{ if } \delta_i=u_i.
\end{cases}
\end{align*}
Note that for $1\leq i\leq t-1$, we have $\delta_i=u_i$ and hence
\[\Delta_i= (\sigma_i+1)(d+1)^{u_i-2} \leq (d+1)\times (d+1)^{u_i-2} =(d+1)^{\delta_i-1}.\]
Notice that $\sum_{i=1}^{t-1} \delta_i= (m-l)-\delta_t$.
Therefore, by Lemma~\ref{lem:} we have
\[\sum_{i=1}^{t-1} \Delta_i
\leq \sum_{i=1}^{t-1} (d+1)^{\delta_i-1}
\leq (d+1)^{m-l-\delta_t-1}.\]
Note that if $t=1$, then the sum on the left is $0$, so the inequality still holds.

For $i=t$, there are three possibilities:
\begin{enumerate}
\item Case 1: $\delta_t=u_t$. In this case $\Delta_t\leq (d+1)^{\delta_t-1}$. Thus, by Lemma~\ref{lem:} we have
\begin{align*}
&H_{s}(d,m;d+1)-H_{s+(m-l)}(d,m;d+1)\\
&=\sum_{i=1}^t \Delta_i
\leq (d+1)^{(m-l-\delta_t)-1} +(d+1)^{\delta_t-1}\\
&\leq (d+1)^{m-l-1}
\leq (\beta_{l}+1) (d+1)^{m-l-1}-1.
\end{align*}

\item Case 2: $\delta_t<u_t$ and $k_{t}>l$.
In this case
\[\Delta_t= (\sigma_t+1)(d+1)^{u_t-2} -(d+1)^{u_t-1-\delta_t} \leq (d+1)^{u_t-1} -(d+1)^{u_t-1-\delta_t}.\]
Since $k_t>l$, we have $u_t=m+1-k_t\leq m-l$.
Therefore, by Lemma~\ref{lem2:} we have
\begin{align*}
&H_{s}(d,m;d+1)-H_{s+(m-l)}(d,m;d+1)
=\sum_{i=1}^t \Delta_i\\
&\leq  (d+1)^{m-l-\delta_t-1} +(d+1)^{u_t-1} -(d+1)^{u_t-\delta_t-1}\\
&\leq (d+1)^{m-l-1}
\leq (\beta_{l}+1) (d+1)^{m-l-1}-1.
\end{align*}

\item Case 3: $\delta_t<u_t$ and $k_{t}=l$.
In this case
\[\Delta_t=(\sigma_t+1)(d+1)^{u_t-2} -(d+1)^{u_t-1-\delta_t} = (\beta_l+1) (d+1)^{u_t-2} -(d+1)^{u_t-1-\delta_t}.\]
Notice that $k_t=l$ means $u_t=m+1-k_t=m-l+1$.
Now, if $t=1$, then $\delta_t=m-l$, hence
\[H_{s}(d,m;d+1)-H_{s+(m-l)}(d,m;d+1)
=\Delta_t = (\beta_l+1) (d+1)^{m-l-1} -1.\]
If $t\geq 2$, then $\delta_t\leq (m-l)-\delta_{t-1}\leq m-l-1$, so
\begin{align*}
&H_{s}(d,m;d+1)-H_{s+(m-l)}(d,m;d+1)
=\sum_{i=1}^t \Delta_i\\
&\leq (d+1)^{m-l-\delta_t-1} +(\beta_l+1)(d+1)^{m-l-1} -(d+1)^{m-l-\delta_t}\\
&=(\beta_l+1)(d+1)^{m-l-1} -d(d+1)^{m-l-1-\delta_t}\\
&\leq (\beta_l+1)(d+1)^{m-l-1} -1.\qedhere
\end{align*} 
\end{enumerate}
\end{proof}

\begin{lemma}\label{lem:3}
Given $d,k\geq 1$, we have
\[d^k\leq 1+(d-1)(d+1)^{k-1}.\]
\end{lemma}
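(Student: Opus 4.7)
The plan is to reduce the inequality to an identity about binomial coefficients. The key rearrangement is to bring the $1$ to the left: the claim $d^k \leq 1+(d-1)(d+1)^{k-1}$ is equivalent to $d^k - 1 \leq (d-1)(d+1)^{k-1}$. Since $d^k - 1$ factors as $(d-1)(d^{k-1}+d^{k-2}+\cdots+1)$, the inequality becomes
\[(d-1)\sum_{j=0}^{k-1}d^j \;\leq\; (d-1)(d+1)^{k-1}.\]

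Next, I would split on $d$. If $d=1$, both sides of the original inequality equal $1$, so the statement holds with equality. If $d\geq 2$, the factor $d-1$ is strictly positive, so it can be divided out, reducing the claim to
\[\sum_{j=0}^{k-1}d^j \;\leq\; (d+1)^{k-1}.\]

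The remaining inequality is immediate from the binomial theorem: expanding the right-hand side gives $(d+1)^{k-1} = \sum_{j=0}^{k-1}\binom{k-1}{j}d^j$, and since $\binom{k-1}{j}\geq 1$ for all $0\leq j\leq k-1$, each term on the right dominates the corresponding term on the left. There is no real obstacle here; this is a short computation, and the only thing to be careful about is the degenerate case $d=1$, which is why I would handle it separately before dividing by $d-1$.
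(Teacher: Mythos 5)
Your proof is correct, and it takes a genuinely different route from the paper's. The paper verifies $d=1$ and $k\in\{1,2,3\}$ by hand and then, for $d\geq 2$ and $k\geq 4$, proves the \emph{stronger} inequality $d^k\leq (d-1)(d+1)^{k-1}$ by showing $1\leq (1-\tfrac{1}{d})(1+\tfrac{1}{d})^{k-1}$ via a monotonicity argument for $f(x)=(1-x)(1+x)^{k-1}$ on $[0,\tfrac{k-2}{k}]$; the case split on $k$ is forced because the strengthened inequality without the $+1$ fails for small $k$ (e.g.\ $d=2$, $k=2$). You instead factor $d^k-1=(d-1)\sum_{j=0}^{k-1}d^j$, cancel $d-1$ when $d\geq 2$, and compare $\sum_{j=0}^{k-1}d^j$ term by term with the binomial expansion $(d+1)^{k-1}=\sum_{j=0}^{k-1}\binom{k-1}{j}d^j$ using $\binom{k-1}{j}\geq 1$. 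This keeps the $+1$ in play throughout, so it is tight exactly where the lemma is tight, needs no calculus, and handles all $k\geq 1$ uniformly with only the trivial $d=1$ split; the paper's approach buys a slightly stronger conclusion in the restricted range $d\geq 2$, $k\geq 4$, but that extra strength is not used. Your argument is shorter and arguably preferable.
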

\begin{proof}
The case $d=1$ and the cases $k\in \{1,2,3\}$ are easy to verify directly. Now assume $d\geq 2$ and $k\geq 4$. It is enough to show that
\[1\leq \big(1-\tfrac{1}{d}\big)\big(1+\tfrac{1}{d}\big)^{k-1}.\]
Consider the function $f(x)=(1-x)(1+x)^{k-1}$, it is easy to see that $f'(x)\geq 0$ for $0\leq x\leq \frac{k-2}{k}$. Since $d\geq 2$ and $k\geq 4$, we know that $\frac{1}{d}\leq \frac{1}{2}\leq \frac{k-2}{k}$. Thus, $f(0)\leq f(\frac{1}{d})$. The result follows.
\end{proof}

\begin{corollary}
Given $\beta,k,d\geq 1$, we have
\[(\beta+1) (d+1)^{k-1} -1\leq \beta \big( (d+1)^k -d^k\big).\]
\end{corollary}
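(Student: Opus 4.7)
The plan is to reduce the corollary to Lemma~\ref{lem:3} by a short algebraic rearrangement, with no additional combinatorial input required.

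First I would rewrite Lemma~\ref{lem:3} as a lower bound for $(d+1)^k-d^k$. Starting from $d^k\leq 1+(d-1)(d+1)^{k-1}$ and subtracting both sides from $(d+1)^k$, I get
\[(d+1)^k-d^k\geq (d+1)^k-1-(d-1)(d+1)^{k-1}=2(d+1)^{k-1}-1,\]
where the last equality uses $(d+1)-(d-1)=2$ on the factor $(d+1)^{k-1}$.

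Next I would multiply through by $\beta\geq 1$ to obtain
\[\beta\bigl((d+1)^k-d^k\bigr)\geq 2\beta(d+1)^{k-1}-\beta.\]
So it suffices to prove the weaker inequality
\[(\beta+1)(d+1)^{k-1}-1\leq 2\beta(d+1)^{k-1}-\beta,\]
and rearranging the terms this is exactly
\[(\beta-1)\bigl((d+1)^{k-1}-1\bigr)\geq 0.\]
Since $\beta\geq 1$ and $d,k\geq 1$ imply $(d+1)^{k-1}\geq 1$, both factors are nonnegative.

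There is no real obstacle here: the content of the corollary is entirely packaged in Lemma~\ref{lem:3}, and the remaining step is a one-line algebraic check. The only thing to be careful about is handling the boundary case $\beta=1$ (where the last inequality degenerates to $0\geq 0$) and the case $k=1$ (where $(d+1)^{k-1}=1$ and both sides collapse), but in both of these degenerate regimes the rearranged inequality holds trivially.
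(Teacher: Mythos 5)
Your proof is correct, and it is essentially the same argument as the paper's: both restate Lemma~\ref{lem:3} as a lower bound involving $(d+1)^k-d^k$, multiply by $\beta$, and finish by an algebraic rearrangement that uses $\beta\geq 1$ and $(d+1)^{k-1}\geq 1$. The only difference is cosmetic (you isolate $(\beta-1)((d+1)^{k-1}-1)\geq 0$ at the end, while the paper chains the inequalities directly), so no further comment is needed.
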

\begin{proof}
Lemma~\ref{lem:3} can be restated as
\[(d+1)^{k-1} -1\leq d(d+1)^{k-1}-d^k.\]
Therefore,
\[(d+1)^{k-1} -1 \leq d(d+1)^{k-1}-d^k \leq \beta (d(d+1)^{k-1}-d^k)
=\beta ((d+1)^{k}-d^k-(d+1)^{k-1}).\]
The result follows.
\end{proof}

\begin{customprop}{\ref{Prop: H' smaller than H}}
Given $m\leq r\leq \binom{m+d}{d}$, for every $q\geq d+1$, we have
\[H'_{r-(m-1)}(d,m)\leq H_r(d,m;q).\]
\end{customprop}
\begin{proof}
Suppose $\omega'_{r-(m-1)}(d,m)=(\beta_1,\dots,\beta_{m+1})=\omega_s(d,m)$. Let $l$ be the smallest index for which $\beta_l\neq 0$. Then we know that $r=s+(m-l)$.

If $l=m$, then $r=s$ and hence
\[H'_{r-(m-1)}(d,m)=\beta_{m}= H_r(d,m;q).\]

Next, if $l<m$, then we have
\begin{align*}
&H_s(d,m;d+1)- H_{s+(m-l)}(d,m;d+1)\\
&\leq (\beta_l+1) (d+1)^{m-l-1} -1\\
&\leq \beta_l ((d+1)^{m-l}- d^{m-l})\\
&\leq H_s(d,m;d+1) -H_s(d,m;d).
\end{align*}
This means $H_s(d,m;d)\leq H_{s+(m-l)}(d,m;d+1)$. We conclude that
\[H'_{r-(m-1)}(d,m) =H_s(d,m;d)\leq H_{s+(m-l)}(d,m;d+1)= H_r(d,m;d+1)\leq H_r(d,m;q).\]
\end{proof}

\bibliographystyle{plain}
\bibliography{Bibl.bib}

\end{document}